\newcommand{\ZZ}{\mathbb{Z}}
\newcommand{\CC}{\mathbb{C}}
\newcommand{\FF}{\mathbb{F}}
\newcommand{\TT}{\mathbb{T}}
\newcommand{\GG}{\mathbb{G}}
\newcommand{\K}{\textup{\normalfont K}}
\renewcommand{\AA}{\mathbb{A}}
\newcommand{\tr}{\textup{tr}}
\newcommand{\id}{\textup{id}}
\newcommand{\SL}{\textup{SL}}
\newcommand{\GL}{\textup{GL}}
\newcommand{\AGL}{\textup{AGL}}
\newcommand{\PGL}{\textup{PGL}}
\newcommand{\UU}{\mathbb{U}}
\newcommand{\Stab}{\textup{Stab}}
\newcommand{\Orbit}{\textup{Orbit}}
\newcommand{\Var}{\textup{\bfseries Var}}
\newcommand{\Mod}{\textup{\bfseries Mod}}
\DeclareMathOperator{\Spec}{Spec}
\newcommand{\Hom}{\text{Hom}}
\DeclareMathAlphabet\mathbfcal{OMS}{cmsy}{b}{n}
\newcommand{\smatrix}[1]{\scalebox{1.1}{\renewcommand{\arraystretch}{0.9}\setlength\arraycolsep{2pt}\scriptsize$\begin{pmatrix} #1 \end{pmatrix}$}}
\newtheorem{theorem}{Theorem}[section]
\newtheorem{conjecture}{Conjecture}[section]
\newtheorem{proposition}[theorem]{Proposition}
\newtheorem{lemma}[theorem]{Lemma}
\newtheorem{corollary}[theorem]{Corollary}
\theoremstyle{definition}
\newtheorem{example}[theorem]{Example}
\newtheorem{notation}[theorem]{Notation}
\newtheorem{remark}[theorem]{Remark}
\newtheorem{algorithm}[theorem]{Algorithm}
\newtheorem{definition}[theorem]{Definition}
\numberwithin{theorem}{section}
\numberwithin{equation}{section}
\newcommand{\bdscale}{0.33}
\def\bdcupleft {
\begin{tikzpicture}[semithick, scale=\bdscale, baseline=-0.5ex]
\begin{scope}
    \draw (0,0) ellipse (0.2cm and 0.4cm);
    \draw (0,-0.4) arc (-90:90:0.75cm and 0.4cm);
\end{scope}
\end{tikzpicture}
}
\def\bdcupright {
\begin{tikzpicture}[semithick, scale=\bdscale, baseline=-0.5ex]
\begin{scope}
    \draw (0,0) ellipse (0.2cm and 0.4cm);
    \draw (0,0.4) arc (90:270:0.75cm and 0.4cm);
\end{scope}
\end{tikzpicture}
}
\newcommand{\bdgenus}[1][\bdscale] {
\begin{tikzpicture}[semithick, scale=#1, baseline=-0.5ex]
\begin{scope}
    \draw (-1,0) ellipse (0.2cm and 0.4cm);
    \draw (-1,0.4) .. controls (-0.5,0.6) and (0.5,0.6) .. (1,0.4);
    \draw (-1,-0.4) .. controls (-0.5,-0.6) and (0.5,-0.6) .. (1,-0.4);
    \draw (-0.5,0.1) .. controls (-0.5,-0.125) and (0.5,-0.125) .. (0.5,0.1);
    \draw (-0.4,0.0) .. controls (-0.4,0.0625) and (0.4,0.0625) .. (0.4,0.0);
    \draw (1,0) ellipse (0.2cm and 0.4cm);
\end{scope}
\end{tikzpicture}
}
\newcommand\bdpantsleft[1][\bdscale] {
\begin{tikzpicture}[semithick, scale=\bdscale, baseline=-0.5ex]
\begin{scope}
    \draw (-1,0.5) ellipse (0.2cm and 0.4cm);
    \draw (-1,-0.5) ellipse (0.2cm and 0.4cm);
    \draw (1,0) ellipse (0.2cm and 0.4cm);
    \draw (-1,0.9) .. controls (0,0.9) and (0,0.4) .. (1,0.4);
    \draw (-1,-0.9) .. controls (0,-0.9) and (0,-0.4) .. (1,-0.4);
    \draw (-1,0.1) .. controls (-0.2,0.1) and (-0.2,-0.1) .. (-1,-0.1);
\end{scope}
\end{tikzpicture}
}
\def\bdgenerictube#1 {
\begin{tikzpicture}[semithick, scale=\bdscale, baseline=-0.5ex]
\begin{scope}
    \draw (-1,0) ellipse (0.2cm and 0.4cm);
    \draw (-1,0.4) cos (-0.875, 0.5) sin (-0.75,0.6) cos (-0.625,0.5) sin (-0.5,0.4) cos (-0.375,0.5) sin (-0.25, 0.6) cos (-0.125,0.5) sin (0, 0.4) cos (0.125,0.5) sin (0.25,0.6) cos (0.375,0.5) sin (0.5,0.4) cos (0.625,0.5) sin (0.75,0.6) cos (0.875,0.5) sin (1,0.4);
    \draw (-1,-0.4) cos (-0.875, -0.5) sin (-0.75,-0.6) cos (-0.625,-0.5) sin (-0.5,-0.4) cos (-0.375,-0.5) sin (-0.25, -0.6) cos (-0.125,-0.5) sin (0,-0.4) cos (0.125,-0.5) sin (0.25,-0.6) cos (0.375,-0.5) sin (0.5,-0.4) cos (0.625,-0.5) sin (0.75,-0.6) cos (0.875,-0.5) sin (1,-0.4);
    \draw (1,0) ellipse (0.2cm and 0.4cm);
    \draw (0,0) node {$#1$};
\end{scope}
\end{tikzpicture}
}
\newcommand\bdparabolic {
\begin{tikzpicture}[semithick, scale=1.25*\bdscale, baseline=-0.5ex]
\begin{scope}
    \draw (-1,0) ellipse (0.2cm and 0.4cm);
    \draw (-1,0.4) -- (1,0.4);
    \draw (-1,-0.4) -- (1,-0.4);
    \draw (1,0.4) arc (90:-90:0.2cm and 0.4cm);
    \draw (1,0.4) arc (90:270:0.2cm and 0.4cm);
    \draw[black,fill=black] (0,-0.10) circle (.4ex);
\end{scope}
\end{tikzpicture}
}
\title{\bfseries Motivic Higman's Conjecture}
\author{Jesse Vogel}
\affil{\footnotesize Mathematical Institute, Leiden University, j.t.vogel@math.leidenuniv.nl}
\date{}
\begin{document}

\maketitle
\vspace{-0.75cm}
\begin{abstract}
    The $G$-representation variety $R_G(\Sigma_g)$ parametrizes the representations of the fundamental groups of surfaces $\pi_1(\Sigma_g)$ into an algebraic group $G$.
    Taking $G$ to be the groups of $n \times n$ upper triangular or unipotent matrices, we compare two methods for computing algebraic invariants of $R_G(\Sigma_G)$.
    Using the geometric method initiated by González-Prieto, Logares and Muñoz, based on a Topological Quantum Field Theory (TQFT), we compute the virtual classes of $R_G(\Sigma_g)$ in the Grothendieck ring of varieties for $n = 1, \ldots, 5$, extending the results of \cite{HablicsekVogel2020}. Introducing the notion of algebraic representatives we are able to efficiently compute the TQFT.
    Using the arithmetic method initiated by Hausel and Rodriguez-Villegas, we compute the $E$-polynomials of $R_G(\Sigma_g)$ for $n = 1, \ldots, 10$.
    For both methods, we describe how the computations can be performed algorithmically.
    Furthermore, we discuss the relation between the representation varieties of the group of unipotent matrices and Higman's conjecture. The computations of this paper can be seen as positive evidence towards a generalized motivic version of the conjecture.
\end{abstract}

\section{Introduction}

Let $X$ be a closed connected manifold, and $G$ an algebraic group over a field $k$. The \emph{$G$-representation variety} of $X$ is the set of group homomorphisms
\[ R_G(X) = \Hom(\pi_1(X), G) . \]
As $\pi_1(X)$ is finitely generated, $R_G(X)$ can be seen as a closed subvariety of $G^n$ for some $n \ge 0$. For $X = \Sigma_g$ a closed surface of genus $g$, the $G$-representation variety takes the explicit form
\begin{equation}
    \label{eq:explicit_representation_variety_surface}
    R_G(\Sigma_g) = \left\{ (A_1, B_1, \ldots, A_g, B_g) \in G^{2g} \mid \prod_{i = 1}^{g} [A_i, B_i] = 1 \right\} .
\end{equation}
Closely related is the \emph{$G$-character variety} of $\Sigma_g$, given by the GIT quotient $\chi_G(\Sigma_g) = R_G(\Sigma_g) \sslash G$, where $G$ acts on $R_G(\Sigma_g)$ by conjugation. The $G$-character variety, also known as the \emph{Betti moduli space}, plays a central role in non-abelian Hodge theory \cite{Simpson1994a, Simpson1994b}. In particular, for $C$ a complex smooth projective curve, the Betti moduli space is isomorphic to the moduli space of $G$-flat connections on $C$ by the Riemann--Hilbert correspondence, and for $G = \GL_n$ it is diffeomorphic to the moduli space of polystable $G$-Higgs bundles of rank $n$ and degree $0$ by the non-abelian Hodge correspondence.

\textbf{Invariants}.
Various algebraic invariants of the $G$-representation variety and $G$-character variety have been studied in the literature, for instance \cite{GonzalezPrietoLogaresMunoz2020,HauselRodriguezVillegas2008,LogaresMunozNewstead2013} amongst many, and in the recently resolved $P = W$ conjecture \cite{HauselMellitMinetsSchiffman2022,MaulikShen2022}.
One of these invariants is the \emph{$E$-polynomial}, also known as the \emph{Serre polynomial}, which for a complex variety $X$ is given by
\[ e(X) = \sum_{k, p, q} (-1)^k \, h_c^{k; p, q}(X) \, u^p v^q \in \ZZ[u, v] , \]
where $h_c^{k; p, q}(X)$ are mixed Hodge numbers of the compactly supported cohomology of $X$. The $E$-polynomial can be shown \cite{HauselRodriguezVillegas2008,LogaresMunozNewstead2013} to satisfy
\[ e(X) = e(Z) + e(X \setminus Z) \quad \textup { and } \quad e(X \times_\CC Y) = e(X) \, e(Y) , \]
for complex varieties $X$ and $Y$, where $Z \subset X$ is a closed subvariety with open complement $X \setminus Z$. The \emph{Grothendieck ring of varieties} $\K(\Var_\CC)$, which will be defined in Section \ref{sec:grothendieck_ring}, is the universal ring for invariants with these properties, so in particular there is a morphism
\begin{equation}
    \label{eq:grothendieck_ring_to_Zuv}
    e : \K(\Var_\CC) \to \ZZ[u, v]
\end{equation}
which sends the class $[X]$ of a complex variety $X$ to its $E$-polynomial $e(X)$. In this sense, the class of a variety $X$ in $\K(\Var_\CC)$, also known as the \emph{virtual class} of $X$, is a more refined invariant than the $E$-polynomial.

Another such invariant is the point count over a finite field, that is, there is a morphism
\[ \# : \K(\Var_{\FF_q}) \to \ZZ \]
which sends the class $[X]$ of a variety $X$ over $\FF_q$ to $|X(\FF_q)|$. A remarkable theorem by Katz \cite[Theorem 6.1.2]{HauselRodriguezVillegas2008} states that
if $X$ is a complex variety with a spreading-out $\tilde{X}$ over a finitely generated $\ZZ$-algebra $R \subset \CC$, such that $|(\tilde{X} \times_R \FF_q)(\FF_q)|$ is a polynomial in $q$ for all ring morphisms $R \to \FF_q$, then the $E$-polynomial of $X$ is precisely this polynomial in $q = uv$.
For example, for the affine line $X = \AA^1_\CC$, we have $|\AA^1_{\FF_q}(\FF_q)| = q$ and $e(X) = uv$, and for this reason we usually write $q = uv$ and $q = [\AA^1_k]$, depending on the context.











    
\textbf{Previous work}.
Over the last years, various methods have been developed and used to compute algebraic invariants of the $G$-representation varieties of $\Sigma_g$.
In \cite{HauselRodriguezVillegas2008}, Hausel and Rodriguez--Villegas initiated the \emph{arithmetic method} in order to compute the $E$-polynomials of the $\GL_n$-representation varieties and twisted $\GL_n$-character varieties. This method makes use of Katz' theorem to reduce the problem to counting the points of $R_G(\Sigma_g)$ over finite fields $\FF_q$.
Frobenius' formula, as proven in \cite[Equation 2.3.8]{HauselRodriguezVillegas2008}, gives an expression for the point count of the representation variety for finite groups $G$,
\[ |R_G(\Sigma_g)| = |G| \sum_{\chi \in \widehat{G}} \left(\frac{|G|}{\chi(1)}\right)^{2g - 2} \]
reducing the problem to the study of the irreducible (complex) characters $\chi \in \widehat{G}$ of $G$ over finite fields $\FF_q$.
In \cite{Mereb2018}, this method was used to compute the $E$-polynomials of twisted $\SL_2$-character varieties, and in \cite{BaragliaHekmati2017} it was used to compute the $E$-polynomials of the $\GL_3$- and $\SL_3$-character varieties, and those of the $\GL_2$- and $\SL_2$-character varieties of non-orientable surfaces.

Logares, Martínez, Muñoz and Newstead used geometric arguments in order to compute $E$-polynomials of the $\SL_2$-character varieties \cite{LogaresMunozNewstead2013,MartinezMunoz2016} and $\PGL_2$-character varieties \cite{Martinez2017}.
This approach was reformulated and generalized by González-Prieto, Logares and Muñoz \cite{GonzalezPrietoLogaresMunoz2020}, who constructed a \emph{Topological Quantum Field Theory} (TQFT) that computes the virtual class of $R_G(\Sigma_g)$ in $\K(\Var_k)$. Using this TQFT, they computed the virtual class of the $\AGL_1$-representation variety, and in \cite{GonzalezPrieto2020} it was used to compute the virtual class of the $\SL_2$-representation variety and the twisted $\SL_2$-character variety. Concretely, this method defines a lax monoidal functor $Z : \textbf{Bord}_2 \to \K(\Var_k)\textup{-}\Mod$, from the category of (pointed) $2$-bordisms to the category of $\K(\Var_k)$-modules, such that $Z(\Sigma_g)(1) = [R_G(\Sigma_g)]$. Using functoriality, it suffices to compute
\[ Z(\bdcupright) : \K(\Var/G) \to \K(\Var_k), \quad Z(\bdgenus) : \K(\Var/G) \to \K(\Var/G), \]
\[ \textup{ and } \quad Z(\bdcupleft) : \K(\Var_k) \to \K(\Var/G) \]
to obtain $Z(\Sigma_g)$ by composition.

\textbf{Outline}. The goal of this paper is to compute the $E$-polynomials and virtual classes of the $G$-representation varieties of $\Sigma_g$ for $G$ equal to the group $\TT_n$ of upper triangular $n \times n$ matrices or the group $\UU_n$ of unipotent $n \times n$ matrices, making use of both the TQFT method and the arithmetic method. Therefore, this work is an extension of \cite{HablicsekVogel2020}, in which the virtual class of $R_{\TT_n}(\Sigma_g)$ was computed for $G = \TT_n$ and $1 \le n \le 4$ using the TQFT method. In particular, in Section \ref{sec:tqft_method} we will compute the virtual classes of $R_{\TT_n}(\Sigma_g)$ and $R_{\UU_n}(\Sigma_g)$ for $1 \le n \le 5$.

In this paper, we aim for an organized and concise framework. Rather than working with the explicit description of the TQFT as done in \cite{GonzalezPrieto2020,GonzalezPrietoHablicsekVogel2022,GonzalezPrietoLogaresMunoz2020}, we will be working with the essential maps as in Definition \ref{def:tqft_maps}, avoiding the need for a `reduction' or `simplification' of the TQFT, as appears in \cite{GonzalezPrieto2020,HablicsekVogel2020}. Introducing the notion of \emph{algebraic representatives} in Subsection \ref{subsec:algebraic_representatives}, we can take maximal advantage of the lack of monodromy in the conjugacy classes of $\TT_n$ and $\UU_n$ in order to simplify the computations. Moreover, the computations are organized in such a way that most of the computations for $\TT_n$ can be reused for $\UU_n$.

Furthermore, in Section \ref{sec:arithmetic_method}, we will apply the arithmetic method to compute $E$-polynomials of $R_{\TT_n}(\Sigma_g)$ and $R_{\UU_n}(\Sigma_g)$ for $1 \le n \le 10$. In particular, we compute the representation theory of $\TT_n$ and $\UU_n$ over finite fields $\FF_q$, encoded in the \emph{representation zeta function}. These zeta functions can be computed using a recursive algorithm, Algorithm \ref{alg:representation_zeta_functions}.

\textbf{Results}. In Theorem \ref{thm:virtual_class_T5_representation_variety}, the virtual class of the $\TT_5$-representation variety $R_{\TT_5}(\Sigma_g)$ is given, and in Theorem \ref{thm:virtual_class_Un_representation_varieties}, the virtual classes of the $\UU_n$-representation varieties $R_{\UU_n}(\Sigma_g)$ are given for $1 \le n \le 5$.

In Theorem \ref{thm:representation_zeta_functions_Un}, the representation zeta functions of $\UU_n$ are given for $1 \le n \le 10$, and in Theorem \ref{thm:representation_zeta_functions_Tn} those of $\TT_n$ for $1 \le n \le 10$. These directly determine the $E$-polynomials of $R_{\UU_n}(\Sigma_g)$ and $R_{\TT_n}(\Sigma_G)$, and for $1 \le n \le 5$ these $E$-polynomials can be seen to agree with the virtual classes through the map \eqref{eq:grothendieck_ring_to_Zuv}.



The computation of the representation zeta functions of $\TT_n$ and $\UU_n$ can be seen as a generalization of \cite{VeraLopezArregi2003} and \cite{PakSoffer2015}, where the number of conjugacy classes of $\UU_n(\FF_q)$, denoted $k(\UU_n(\FF_q))$, was computed as a polynomial in $q$ for $1 \le n \le 13$ and $1 \le n \le 16$, respectively. Namely, since the number of irreducible representations of a finite group equals its number of conjugacy classes, evaluating the representation zeta function $\zeta_G(s)$ in $s = 0$ yields $k(G)$. This relates the zeta functions $\zeta_{\UU_n(\FF_q)}(s)$ to a conjecture by Higman first posed in \cite{Higman1960a}.

\begin{conjecture}[Higman]
    \label{conj:higman}
    For every $n \ge 1$, the number of conjugacy classes of $\UU_n(\FF_q)$ is a polynomial in $q$.
\end{conjecture}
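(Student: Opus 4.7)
The plan is to stratify the variety $\UU_n$ by a combinatorial invariant $\tau$ encoding conjugacy type---for instance the pattern of leading nonzero entries in some chosen canonical form---and to show that, for each such type $\tau$, the number of conjugacy classes of $\UU_n(\FF_q)$ of type $\tau$ is a polynomial in $q$. Since $\UU_n$ is a unipotent algebraic group of dimension $\binom{n}{2}$, only finitely many types can occur, so a type-wise polynomial count would immediately imply polynomiality of $k(\UU_n(\FF_q))$. In concrete terms, following Vera-López and Arregi, for each $u \in \UU_n(\FF_q)$ one produces a canonical representative $u^\circ$ of its conjugacy class by inductively normalizing the strictly upper diagonals of $u$ under conjugation by elementary unipotent matrices; the canonical forms are then parameterized by a finite combinatorial set (positions of the ``inert'' entries together with their values in $\FF_q^\times$), and for each combinatorial pattern one hopes to extract a polynomial count in $q$ by tracking centralizer dimensions along the recursion.

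A second, more arithmetic route---directly in the spirit of this paper---is to observe that the Burnside/Frobenius identity gives $|R_{\UU_n}(\Sigma_1)(\FF_q)| = k(\UU_n(\FF_q)) \cdot |\UU_n(\FF_q)|$, and since $|\UU_n(\FF_q)| = q^{\binom{n}{2}}$ it suffices to show that $|R_{\UU_n}(\Sigma_1)(\FF_q)|$---that is, the number of commuting pairs $(u,v) \in \UU_n(\FF_q)^2$---is a polynomial in $q$. By Katz's theorem this would follow from polynomiality in $q = [\AA^1_{\FF_q}]$ of the virtual class $[R_{\UU_n}(\Sigma_1)] \in \K(\Var_{\FF_q})$, a motivic refinement which is in principle accessible via the TQFT machinery of Section \ref{sec:tqft_method} applied to the torus bordism $\bdcupright \circ \bdgenus \circ \bdcupleft$.

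The hard part will be the combinatorial and cohomological explosion. The number of strata grows super-exponentially in $n$, and---more seriously---individual strata involve commuting-variety pieces whose point counts are not a priori polynomial in $q$; one cannot rule out, type by type, that compactly supported $\ell$-adic cohomology contributes non-polynomial Frobenius eigenvalues. Any proof must therefore exhibit systematic cancellation across strata, and no known technique provides such cancellation uniformly in $n$: this is precisely where every candidate proof has stalled, and why the conjecture has remained open for over sixty years. The computations of Theorem \ref{thm:representation_zeta_functions_Un} and Theorem \ref{thm:virtual_class_Un_representation_varieties} should thus be read as motivic evidence towards the finer statement that $[R_{\UU_n}(\Sigma_g)]$ is itself a polynomial in $[\AA^1_{\FF_q}]$, rather than as steps of an imminent proof of Conjecture \ref{conj:higman}.
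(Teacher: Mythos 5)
The statement you are asked about is not a theorem of the paper: it is Higman's Conjecture, which the paper records as an open problem (verified for $1 \le n \le 16$, with evidence from pattern subgroups that it may even fail around $n = 59$). The paper supplies no proof, only computational evidence via the representation zeta functions $\zeta_{\UU_n(\FF_q)}(s)$ for $n \le 10$, whose evaluation at $s = 0$ recovers $k(\UU_n(\FF_q))$. Your proposal is therefore correctly calibrated: you sketch the two standard attack routes (the Vera-López--Arregi canonical-form stratification, and the reduction via Frobenius' formula to polynomiality of the commuting-pair count $|R_{\UU_n}(\Sigma_1)(\FF_q)| = k(\UU_n(\FF_q)) \cdot q^{\binom{n}{2}}$), and you explicitly acknowledge that neither closes: there is no known mechanism forcing the stratum-by-stratum counts, or the cancellations among them, to be polynomial uniformly in $n$. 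That acknowledgment is the honest and correct conclusion; no proof exists to compare against.

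One small correction to your second route: Katz's theorem runs from polynomial point count to the $E$-polynomial, not the reverse. The implication you want --- that $[R_{\UU_n}(\Sigma_1)] \in \ZZ[q] \subset \K(\Var_{\FF_q})$ would force polynomial point count --- follows instead from applying the point-counting ring morphism $\# : \K(\Var_{\FF_q}) \to \ZZ$ to the virtual class, which sends $q = [\AA^1]$ to $q$. With that fixed, your reduction is exactly the content of the paper's Motivic Higman Conjecture in the genus-one case, and your framing of the computations in Theorems \ref{thm:representation_zeta_functions_Un} and \ref{thm:virtual_class_Un_representation_varieties} as evidence rather than proof matches the paper's own stance.
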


As mentioned, this conjecture has been verified for $1 \le n \le 16$. However, as shown in \cite{HalasiPalfy2011}, there exist \emph{pattern subgroups} of $\UU_n$ whose number of conjugacy classes over $\FF_q$ is not polynomial in $q$.
In \cite{PakSoffer2015}, a concrete such pattern subgroup $P \subset \UU_{13}$ is given, and moreover it is shown that $k(\UU_{59})$ can be expressed as a $\ZZ[q]$-linear combination of $k(P)$ and $k(P')$ for other pattern subgroups $P'$. This suggests the conjecture might fail for $n = 59$, unless the non-polynomial terms in the linear combination cancel.

Looking at the results of Theorem \ref{thm:representation_zeta_functions_Un} and \ref{thm:representation_zeta_functions_Tn}, we pose a generalized version of Higman's conjecture.

\begin{conjecture}[Generalized Higman]
    \label{conj:generalized_higman}
    For any $n \ge 1$, the representation zeta function $\zeta_{\UU_n(\FF_q)}(s)$ is a polynomial in $q$ and $q^{-s}$, and $\zeta_{\TT_n(\FF_q)}(s)$ is a polynomial in $q, q^{-s}$ and $(q - 1)^{-s}$.
\end{conjecture}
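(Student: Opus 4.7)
The plan is to work by induction on $n$, using Clifford theory along the upper central series of $\UU_n$. Each step of the filtration has an abelian quotient, and the conjugation action of the quotient on characters of the subgroup breaks up into orbits whose sizes, stabilizers, and extensions can (one hopes) be stratified by polynomial-in-$q$ conditions. For $\TT_n$, a further Clifford decomposition along the exact sequence $1 \to \UU_n \to \TT_n \to \GG_m^n \to 1$ reduces the problem to understanding the action of $(\FF_q^\times)^n$ on $\widehat{\UU_n(\FF_q)}$; characters of the diagonal torus explain the $(q-1)^{-s}$ factor, while the unipotent contributions supply the $q^{-s}$ factor. In practice this is exactly the shape of the recursive construction underlying Algorithm \ref{alg:representation_zeta_functions}, so the conjecture is essentially equivalent to showing that this algorithm always outputs an expression polynomial in the claimed variables.

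A more conceptual alternative is Kirillov's orbit method: in sufficiently large characteristic, irreducible characters of $\UU_n(\FF_q)$ are in bijection with coadjoint orbits in $\mathfrak{u}_n(\FF_q)^{\ast}$, and a character attached to an orbit of size $q^{2k}$ has dimension $q^k$. Under this bijection,
\[ \zeta_{\UU_n(\FF_q)}(s) = \sum_{k \ge 0} N_k(q) \, q^{-ks}, \]
where $N_k(q)$ denotes the number of coadjoint orbits of size $q^{2k}$. The task then reduces to showing that each $N_k(q)$ is a polynomial in $q$ independent of characteristic, and separately accounting for small-characteristic phenomena where Kirillov's correspondence fails.

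The main obstacle is unavoidable: specializing $s = 0$ gives $\zeta_{\UU_n(\FF_q)}(0) = k(\UU_n(\FF_q))$, the number of conjugacy classes, so Conjecture \ref{conj:generalized_higman} implies the classical Conjecture \ref{conj:higman}, open for sixty years and possibly false in view of the pattern-subgroup obstructions of \cite{HalasiPalfy2011, PakSoffer2015}. Any honest attempt at a proof must therefore either engineer a cancellation between pattern-subgroup contributions in the style of \cite{PakSoffer2015} or bypass the pattern-subgroup reduction entirely; neither is currently in sight. A realistic intermediate goal is to extend the algorithmic verification beyond $n = 10$, or to establish a quasi-polynomial weakening in which $\zeta_{\UU_n(\FF_q)}(s)$ is permitted to be a polynomial in $q$ and $q^{-s}$ whose coefficients depend on $q$ modulo some bounded integer $N$.
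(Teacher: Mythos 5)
The statement you are addressing is a conjecture, and the paper does not prove it; it only supplies computational evidence by exhibiting $\zeta_{\UU_n(\FF_q)}(s)$ and $\zeta_{\TT_n(\FF_q)}(s)$ in the claimed form for $1 \le n \le 10$ (Theorems \ref{thm:representation_zeta_functions_Un} and \ref{thm:representation_zeta_functions_Tn}), obtained via Algorithm \ref{alg:representation_zeta_functions}. Your proposal correctly recognizes this and does not claim a proof, which is the right conclusion. Your first paragraph in fact reproduces the paper's own mechanism: the recursive semidirect decomposition $\UU_n \cong N \rtimes \UU_{n-1}$ with $N$ abelian, the orbit/stabilizer analysis of the induced action on $\Hom(N,\CC^*)$ via Proposition \ref{prop:representations_semidirect_product} and Corollary \ref{cor:zeta_function_semidirect_product}, and the splitting off of $\GG_m$ factors to account for $(q-1)^{-s}$ in the $\TT_n$ case. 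Your observation that the conjecture is essentially equivalent to the algorithm never failing is exactly the remark the paper makes after Algorithm \ref{alg:orbit_representatives}, including the consequence (via evaluation at $s=0$) that it would imply the classical Higman conjecture. The Kirillov orbit-method reformulation you sketch is a genuinely different and reasonable angle not pursued in the paper, but as you note it merely relocates the difficulty to the polynomiality of the orbit counts $N_k(q)$. In short: no gap relative to the paper, because the paper leaves the statement open as well; your assessment of why a proof is out of reach is accurate.
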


Similarly, looking at the virtual classes of $R_{\TT_n}(\Sigma_g)$ and $R_{\UU_n}(\Sigma_g)$, we pose a motivic version of the conjecture. Note that Conjecture \ref{conj:motivic_higman} does not imply Conjecture \ref{conj:generalized_higman}, since a variety $X$ could have virtual class in $\ZZ[q]$ without having polynomial count, e.g. $X = \{ (x, y) \mid x^2 + y^2 = 1 \}$. 

\begin{conjecture}[Motivic Higman]
    \label{conj:motivic_higman}
    For any $g \ge 0$ and $n \ge 1$, the virtual classes of the $\TT_n$- and $\UU_n$-representation variety of $\Sigma_g$ are polynomial in $q = [\AA^1_\CC]$.
\end{conjecture}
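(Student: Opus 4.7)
The plan is to proceed via the TQFT framework of Section \ref{sec:tqft_method}: by functoriality, $[R_G(\Sigma_g)] = Z(\bdcupright) \circ Z(\bdgenus)^g \circ Z(\bdcupleft)$ applied to $1$, so it suffices to find a $\K(\Var_\CC)$-submodule $M \subset \K(\Var/G)$ that contains $Z(\bdcupleft)(1)$, is stable under $Z(\bdgenus)$, and on which the three atomic maps are represented by matrices with entries in $\ZZ[q]$. This would immediately give $[R_G(\Sigma_g)] \in \ZZ[q]$ for every $g$, independently of $g$.

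First, I would use the method of algebraic representatives from Subsection \ref{subsec:algebraic_representatives} to realize $M$ concretely as a finitely generated free $\K(\Var_\CC)$-module indexed by a combinatorial set of conjugacy class \emph{types} in $G$. For $G = \TT_n$ or $\UU_n$, the absence of monodromy in the conjugacy classes, emphasized in the introduction and used throughout Section \ref{sec:tqft_method}, means that these types are parameterized algebraically without nontrivial étale covers appearing, which keeps the bookkeeping inside $\ZZ[q]$ rather than forcing extensions by roots of $q$.

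Second, I would attempt an induction on $n$. Writing $\UU_{n+1}$ as a central extension of $\UU_n$ by $\GG_a$ corresponding to the top-right entry, one can try to express the TQFT for $\UU_{n+1}$ in terms of that for $\UU_n$ together with polynomial-in-$q$ corrections coming from commutators against the new generator. The case $G = \TT_n$ should then follow from $\UU_n$ by exploiting the semidirect product $\TT_n \cong \UU_n \rtimes (\GG_m)^n$: the torus factor contributes the $(q-1)$-terms that correspond on the arithmetic side to the $(q-1)^{-s}$ appearing in Conjecture \ref{conj:generalized_higman}, but motivically these remain polynomial in $q$.

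The main obstacle is exactly the phenomenon that makes Higman's conjecture difficult for large $n$: the combinatorial stratification of conjugacy classes of $\UU_n$ into types can already fail to be polynomial in $q$ for pattern subgroups \cite{HalasiPalfy2011, PakSoffer2015}, and it is not known whether such failures propagate to $\UU_n$ itself. If they do, polynomiality of the virtual class must arise from cancellations in the TQFT composition rather than from each factor individually, and no general mechanism producing such cancellations is currently available. A realistic intermediate result, therefore, is to verify the conjecture for each fixed $n$ by extending the explicit computations of Section \ref{sec:tqft_method} beyond $n = 5$, pushing the algebraic representatives formalism as far as computational feasibility allows and hoping that a uniform structural pattern becomes visible.
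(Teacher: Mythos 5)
This statement is a \emph{conjecture}, and the paper does not prove it: the only support offered is the explicit computations of $[R_{\TT_n}(\Sigma_g)]$ and $[R_{\UU_n}(\Sigma_g)]$ for $1 \le n \le 5$ (Theorems \ref{thm:virtual_class_T5_representation_variety} and \ref{thm:virtual_class_Un_representation_varieties}) together with the $E$-polynomial computations for $n \le 10$. Your proposal is likewise not a proof but a strategy sketch, and you honestly flag this yourself in the final paragraph; so the real question is whether the strategy is viable, and here there is a concrete obstruction you have not addressed. Your first step requires a finitely generated $\K(\Var_\CC)$-submodule $M \subset \K(\Var/G)$ containing $Z(\bdcupleft)(1)$ and stable under $Z(\bdgenus)$, indexed by conjugacy class ``types.'' The paper's Conclusion explains why exactly this fails for $n \ge 6$: already $\TT_6$ has \emph{infinitely many} unipotent conjugacy classes (by \cite{Bhunia2019}), an explicit one-parameter family of pairwise non-conjugate unipotent elements is exhibited, and every member of that family arises as a commutator in $\TT_6$ — so no finite set of conjugacy classes can span a $Z(\bdgenus)$-stable submodule through which $Z(\bdcupleft)(1)$ passes. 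The ``absence of monodromy'' that you invoke controls the torsor/étale issues within a single class, but it does nothing about the failure of finiteness of the set of classes, which is the actual bottleneck; any workable $M$ would have to be built from coarser, possibly positive-dimensional families of classes, and no such construction is given.

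The inductive step is similarly unsubstantiated. Passing from $\UU_n$ to $\UU_{n+1}$ via a central extension (note: the kernel of $\UU_{n+1} \to \UU_n$ is $\GG_a^{\,n}$, not $\GG_a$, and only the top-right coordinate is central) is exactly the mechanism the paper uses on the \emph{arithmetic} side (Subsection \ref{subsec:arithmetic_upper_triangular} and Algorithm \ref{alg:representation_zeta_functions}), and even there the paper is explicit that the recursion is allowed to fail and that its success for all $n$ would essentially \emph{imply} Higman's conjecture — it is not a tool one can assume. Since the number of conjugacy classes of pattern subgroups of $\UU_n(\FF_q)$ is known not to be polynomial in $q$ \cite{HalasiPalfy2011}, any correct proof must either isolate what is special about $\UU_n$ and $\TT_n$ themselves or exhibit the cancellation mechanism you mention; your proposal identifies this gap but does not close it. As written, the proposal establishes nothing beyond the finitely many cases already verified in the paper.
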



\section{Grothendieck ring of varieties}
\label{sec:grothendieck_ring}

Let $k$ be a field. By a \emph{variety} over $k$ we understand a reduced separated scheme of finite type over a field $k$, not necessarily irreducible. All varieties are assumed to be over $k$.


\begin{definition}
    Let $S$ be a variety. The \emph{Grothendieck ring of varieties} over $S$, denoted $\K(\Var/S)$, is the quotient of the free abelian group on isomorphism classes of varieties over $S$, by relations of the form
    \[ [X] = [Z] + [U] , \]
    for all closed subvarieties $Z \subset X$ with open complement $U = X \setminus Z$. Multiplication on $\K(\Var/S)$ is defined by the fiber product over $S$,
    \[ [X] \cdot [Y] = [X \times_S Y] , \]
    and this is easily seen to give a ring structure on $\K(\Var/S)$. In particular, the unit element is the class $\textbf{1}_S = [S]$ of $S$ over itself, and the zero element is the class $\textbf{0}_S = [\varnothing]$. The class $[X]$ of a variety $X$ in $\K(\Var/S)$ is also called its \emph{virtual class}.
\end{definition}

In order to distinguish between classes over different bases, we write $[X]_S$ for the class of $X$ in $\K(\Var/S)$, and when the base is $S = \Spec k$, we simply write $[X]$. Furthermore, we denote by $q = [\AA^1_k] \in \K(\Var_k)$ the class of the affine line.

For any variety $S$, the ring $\K(\Var/S)$ naturally has the structure of a $\K(\Var_k)$-module, where scalar multiplication is given by
\[ [T] \cdot [X]_S = [T \times X]_S \]
for all varieties $T$ and varieties $X$ over $S$, and extended linearly.

For any morphism $f : T \to S$ of varieties, there are induced maps
\begin{align*}
    f_! &: \K(\Var/T) \to \K(\Var/S), \quad [X]_T \mapsto [X]_S, \\
    f^* &: \K(\Var/S) \to \K(\Var/T), \quad [X]_S \mapsto [X \times_S T] ,
\end{align*}
which are seen to be $\K(\Var_k)$-module morphisms. Moreover, the map $f^*$ is a ring morphism.

For any variety $S$, let $c : S \to \Spec k$ denote the final morphism. In particular, for any variety $X$ over $S$, we have
\[ c_! [X]_S = [X] . \]

\subsection{Stratifications}

\begin{definition}
    Let $X$ be a variety. A \emph{stratification} of $X$ is a collection of locally closed subvarieties $\{ X_i \}_{i \in I}$ such that $\bigcup_{i \in I} X_i = X$ and $X_i \cap X_j = \varnothing$ for $i \ne j$. 
\end{definition}

\begin{lemma}
    \label{lemma:stratifications_are_finite}
    Let $X$ be a variety over $S$, with stratification $\{ X_i \}_{i \in I}$. Then only finitely many of the $X_i$ are non-empty, and $\sum_{i \in I} [X_i]_S = [X]_S$.
\end{lemma}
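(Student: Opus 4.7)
The plan is to prove both claims by induction: Noetherian induction for the finiteness and induction on the number of non-empty strata for the sum. For the finiteness, I would show by Noetherian induction on closed subvarieties $Z \subseteq X$ that each $Z$ meets only finitely many $X_i$; applying this with $Z = X$ gives the claim. For non-empty $Z$, pick an irreducible component $Y$ of $Z$ with generic point $\eta$, and let $X_{i_0}$ be the stratum containing $\eta$. The key point is that $W := X_{i_0} \cap Y$ is open in $Y$: indeed $W$ is locally closed in $Y$, so $W = V \cap C$ with $V$ open and $C$ closed in $Y$, and $\eta \in C$ together with irreducibility of $Y$ forces $C = Y$. Writing the irreducible components of $Z$ as $Y, Y_2, \ldots, Y_m$, the set $Z' := Z \setminus W = (Y \setminus W) \cup Y_2 \cup \ldots \cup Y_m$ is closed in $Z$ and strictly smaller. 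For $j \ne i_0$, disjointness forces $X_j \cap W = \varnothing$ and hence $X_j \cap Z \subseteq Z'$, so the inductive hypothesis bounds the finitely many such $j$ with $X_j \cap Z \ne \varnothing$.

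For the sum identity, I would induct on the number $n$ of non-empty strata, using the finiteness just established. The case $n \le 1$ is immediate. For $n \ge 2$, pick any non-empty stratum $X_{i_0}$. Since $X_{i_0}$ is locally closed it is open in $\overline{X_{i_0}}$, so $Z := \overline{X_{i_0}} \setminus X_{i_0}$ is closed in $X$ and $U := X \setminus Z$ is an open neighborhood of $X_{i_0}$ with $X_{i_0} = U \cap \overline{X_{i_0}}$ closed in $U$. The defining relation of $\K(\Var/S)$ gives
\[ [X]_S = [Z]_S + [X_{i_0}]_S + [U \setminus X_{i_0}]_S . \]
Both $\{X_j \cap Z\}_{j \ne i_0}$ and $\{X_j \cap U\}_{j \ne i_0}$ are stratifications with at most $n - 1$ non-empty pieces (since $X_{i_0}$ is excluded), so the inductive hypothesis yields $[Z]_S = \sum_{j \ne i_0}[X_j \cap Z]_S$ and $[U \setminus X_{i_0}]_S = \sum_{j \ne i_0}[X_j \cap U]_S$. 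Finally, since $X_j \cap U$ is open in $X_j$ with closed complement $X_j \cap Z$, the same relation gives $[X_j]_S = [X_j \cap Z]_S + [X_j \cap U]_S$, and summing over $j \ne i_0$ assembles the pieces into $[X]_S = \sum_i [X_i]_S$.

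The main subtlety, I expect, is the finiteness: the defining relation of $\K(\Var/S)$ only handles single open/closed decompositions, so one has to argue topologically. The essential input is that the stratum containing the generic point of an irreducible component is automatically open in that component, and this is precisely the fact that lets the Noetherian induction terminate.
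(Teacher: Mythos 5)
Your proof is correct. Both halves check out: in the finiteness argument, the stratum containing the generic point $\eta$ of an irreducible component $Y$ is indeed open in $Y$ (your $C=Y$ argument is the right one), $Z' = Z\setminus W$ is closed and proper since $\eta\notin Z'$, and the Noetherian induction terminates; in the sum identity, $\overline{X_{i_0}}\setminus X_{i_0}$ is closed in $X$ because $X_{i_0}$ is open in its closure, and the bookkeeping with the two induced stratifications of $Z$ and $U\setminus X_{i_0}$ assembles correctly.

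Your route differs from the paper's in organization, though it shares the same key input. The paper proves both claims simultaneously by induction on $\dim X$, with a sub-induction on the number of irreducible components in the reducible case: for irreducible $X$ it peels off the open stratum containing the generic point and passes to the lower-dimensional complement. Your version separates the two claims. For finiteness you use Noetherian induction, which absorbs both of the paper's inductions (on dimension and on the number of components) into one and avoids the case split; the price is that you peel off only the open piece $W = X_{i_0}\cap Y$ of one stratum inside one component at a time, rather than a whole open stratum. For the sum identity you then give a purely formal induction on the number of non-empty strata, using the decomposition $X = (\overline{X_{i_0}}\setminus X_{i_0})\sqcup X_{i_0}\sqcup(U\setminus X_{i_0})$; this part needs no geometry beyond finiteness and is the standard argument that any finite locally closed decomposition yields $[X]_S=\sum_i[X_i]_S$. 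The paper's proof is more compact; yours is more modular and makes explicit that the only genuinely geometric content is the finiteness statement.
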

\begin{proof}
    Proof by induction on the dimension of $X$. If $\dim X = 0$, then $X$ is a finite set of points, and the result is clear. Now assume that $\dim X > 0$ and that the result holds for all varieties of dimension less than $\dim X$.
    
    First consider the case where $X$ is irreducible. Some $U = X_i$ contains the generic point of $X$, and is therefore open. The complement $Z = X \setminus U$ is of smaller dimension than $X$ and is stratified by the other $X_i$. Since $[X]_S = [Z]_S + [U]_S$, the result follows from the induction hypothesis.
    
    Now consider the case where $X$ is reducible. Take an irreducible component and remove the intersections with the other irreducible components, which gives an irreducible open subset $U \subset X$. The complement $Z = X \setminus U$ is a closed subvariety with fewer irreducible components than $X$. Since $\{ Z \cap X_i \}_{i \in I}$ is a stratification of $Z$ and $\{ U \cap X_i \}_{i \in I}$ a stratification of $U$, apply induction on the number of irreducible components of $X$ to find that only finitely many are non-empty, and we have
    \[ [U]_S = \sum_{i \in I} [U \cap X_i]_S \quad \textup{ and } \quad [Z]_S = \sum_{i \in I} [Z \cap X_i]_S . \]
    Since $[X_i]_S = [U \cap X_i]_S + [Z \cap X_i]_S$ for all $i$, it follows that $[X]_S = [U]_S + [Z]_S = \sum_{i \in I} [X_i]_S$.
\end{proof}

A stratification $\{ S_i \}_{i \in I}$ of $S$ induces a decomposition of the Grothendieck ring $\K(\Var/S)$. This is a decomposition as $\K(\Var_k)$-algebras.


\begin{proposition}
    Let $S$ be a variety with stratification $\{ S_i \}_{i \in I}$. Then
    \[ \K(\Var/S) \cong \prod_{i \in I} \K(\Var/S_i) \]
    as $\K(\Var_k)$-algebras.
\end{proposition}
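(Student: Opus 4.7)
The plan is to construct maps in both directions using the locally closed inclusions $\iota_i : S_i \hookrightarrow S$ and show they are mutually inverse $\K(\Var_k)$-algebra morphisms. First I would define
\[ \Phi : \K(\Var/S) \to \prod_{i \in I} \K(\Var/S_i), \quad [X]_S \mapsto \bigl( \iota_i^* [X]_S \bigr)_{i \in I} = \bigl( [X \times_S S_i]_{S_i} \bigr)_{i \in I} . \]
Since each $\iota_i^*$ is a $\K(\Var_k)$-algebra morphism (as noted after the definition of $f^*$), so is $\Phi$. Note that by Lemma \ref{lemma:stratifications_are_finite}, only finitely many $S_i$ are non-empty, so the product is effectively finite.

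For the inverse, I would define
\[ \Psi : \prod_{i \in I} \K(\Var/S_i) \to \K(\Var/S), \quad \bigl( [X_i]_{S_i} \bigr)_{i \in I} \mapsto \sum_{i \in I} (\iota_i)_! [X_i]_{S_i} , \]
which is a finite sum and is clearly $\K(\Var_k)$-linear. To see $\Psi \circ \Phi = \id$, start with $[X]_S$ and note that $\{ X \times_S S_i \}_{i \in I}$ is a stratification of $X$, so by Lemma \ref{lemma:stratifications_are_finite} we get $\sum_i [X \times_S S_i]_S = [X]_S$. To see $\Phi \circ \Psi = \id$, the essential computation is that $S_i \cap S_j = \varnothing$ for $i \ne j$ implies $S_i \times_S S_j = \varnothing$, while $S_i \times_S S_i = S_i$, so $\iota_j^* (\iota_i)_! [X_i]_{S_i} = [X_i]_{S_i}$ if $i = j$ and $0$ otherwise.

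The main obstacle is checking that $\Psi$ is multiplicative, since the product ring has componentwise multiplication but $\Psi$ is a sum of pushforwards. Concretely, for $[X_i]_{S_i}$ and $[Y_j]_{S_j}$ viewed as varieties over $S$ through $\iota_i$ and $\iota_j$, one computes
\[ (\iota_i)_! [X_i]_{S_i} \cdot (\iota_j)_! [Y_j]_{S_j} = [X_i \times_S Y_j]_S , \]
and the disjointness $S_i \cap S_j = \varnothing$ for $i \ne j$ forces $X_i \times_S Y_j = \varnothing$ (since the morphism factors through $S_i \times_S S_j = \varnothing$). For $i = j$, the fact that $\iota_i$ is a monomorphism gives $X_i \times_S Y_i = X_i \times_{S_i} Y_i$, so $(\iota_i)_! [X_i]_{S_i} \cdot (\iota_i)_! [Y_i]_{S_i} = (\iota_i)_! \bigl([X_i]_{S_i} \cdot [Y_i]_{S_i}\bigr)$. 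Combining, $\Psi$ respects multiplication, and as it also sends the unit $([S_i])_i$ to $\sum_i [S_i]_S = [S]_S$ by Lemma \ref{lemma:stratifications_are_finite}, it is a $\K(\Var_k)$-algebra morphism. This completes the isomorphism.
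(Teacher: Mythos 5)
Your proof is correct and follows essentially the same route as the paper: pull back along the inclusions $S_i \hookrightarrow S$ in one direction, push forward and sum (finite by Lemma \ref{lemma:stratifications_are_finite}) in the other, and check the two are mutually inverse algebra morphisms. The paper leaves the verification as "easily seen"; you have simply supplied the details (disjointness killing the cross terms, the monomorphism property giving multiplicativity on the diagonal), which are exactly the right ones.
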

\begin{proof}
    Writing $f_i : S_i \to S$ for the inclusions, the isomorphism is explicitly given by
    \[ X \mapsto (f_i^* X)_{i \in I} \quad \textup{ with inverse } \quad \sum_{i \in I} (f_i)_* X_i \mapsfrom (X_i)_{i \in I} . \]
    Note that the sum is well-defined since by Lemma \ref{lemma:stratifications_are_finite} only finitely many $S_i$ are non-empty. These maps are easily seen to be inverse to each other, and morphisms of $\K(\Var_k)$-algebras.
\end{proof}

\begin{notation}
    For any $X \in \K(\Var/S)$, we will write $X|_{S_i} \in \K(\Var/S_i)$ for the components of the image of $X$ under this isomorphism.
\end{notation}

\subsection{Special algebraic groups}

Special algebraic groups were first introduced by Serre \cite{Serre1958}.

\begin{definition}
    An algebraic group $G$ over $k$ is \emph{special} if any étale $G$-torsor is locally trivial in the Zariski topology.
    Equivalently, $G$ is special if for all $k$-schemes $X$, the natural map $H^1_\textup{Zar}(X, G) \to H^1_\textup{ét}(X, G)$ is an isomorphism.
\end{definition}

\begin{lemma}
    \label{lemma:virtual_class_special_G_torsor}
    Let $G$ be a special algebraic group. Then for every $G$-torsor of varieties $X \to S$, we have $[X]_S = [G] \cdot \textup{\bf 1}_S$ in $\K(\Var/S)$.
\end{lemma}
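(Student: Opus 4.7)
The plan is to use the defining property of a special group to produce a Zariski-local trivialization of the torsor $X \to S$, then stitch the resulting local equalities together via scissor relations in the Grothendieck ring.

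First, by specialness of $G$, there exists a Zariski open cover $\{U_i\}_{i \in I}$ of $S$ such that for each $i$ the pullback $X \times_S U_i \to U_i$ is trivial as a $G$-torsor, i.e.\ $X \times_S U_i \cong G \times U_i$ over $U_i$. Since $S$ is of finite type over $k$, hence noetherian and quasi-compact, I may assume $I = \{1, \ldots, n\}$ is finite.

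Next, I would refine this cover to a stratification. Set $S_1 = U_1$ and $S_i = U_i \setminus (U_1 \cup \cdots \cup U_{i-1})$ for $i \ge 2$. Each $S_i$ is a locally closed subvariety of $S$ contained in $U_i$, and $\{S_i\}$ partitions $S$. Applying Lemma \ref{lemma:stratifications_are_finite} once to $S$ itself and once to $X$ with the pulled-back strata $\{X \times_S S_i\}$, we obtain
\[ \textbf{1}_S = \sum_{i = 1}^{n} [S_i]_S \qquad \textup{and} \qquad [X]_S = \sum_{i = 1}^{n} [X \times_S S_i]_S . \]

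Since the trivializing isomorphism over $U_i$ restricts to $S_i \subset U_i$, we get $X \times_S S_i \cong G \times S_i$ as varieties over $S_i$, and hence $[X \times_S S_i]_{S_i} = [G] \cdot \textbf{1}_{S_i}$. Pushing forward along the locally closed inclusion $f_i : S_i \hookrightarrow S$ and using that $(f_i)_!$ is a $\K(\Var_k)$-module morphism yields $[X \times_S S_i]_S = [G] \cdot [S_i]_S$ in $\K(\Var/S)$. Summing over $i$ and factoring out $[G]$ gives $[X]_S = [G] \cdot \textbf{1}_S$. The only real obstacle is the bookkeeping of turning Zariski-local trivializations into a finite stratification; finiteness comes for free from noetherianity of $S$, and no input beyond the definition of specialness is needed.
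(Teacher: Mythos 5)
Your proof is correct and follows essentially the same route as the paper: pass from the Zariski-local trivialization guaranteed by specialness to a finite stratification of $S$ on which the torsor is trivial, then sum the resulting classes in $\K(\Var/S)$. The paper's version is just terser, leaving the refinement of the open cover to a stratification implicit.
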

\begin{proof}
    Since $G$ is special, the $G$-torsor $X \to S$ is Zariski-locally trivial, so there exists a stratification $\{ S_i \}_{i \in I}$ of $S$ such that $X \times_S S_i \cong G \times S_i$. Now, $[X]_S = \sum_{i \in I} [G \times S_i]_S = [G] \cdot \sum_{i \in I} [S_i]_S = [G] \cdot \textbf{1}_S$.
\end{proof}

\begin{proposition}
    \label{prop:extension_special_groups}
    Let $1 \to N \to G \to H \to 1$ be an exact sequence of algebraic groups. If $N$ and $H$ are special, then so is $G$.
\end{proposition}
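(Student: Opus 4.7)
The plan is to argue directly via intermediate quotient torsors rather than using the non-abelian cohomology formulation. Let $\pi : X \to S$ be an étale $G$-torsor; I want to show it is Zariski-locally trivial, i.e., that $\pi$ admits a section after restricting to some Zariski open cover of $S$.

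First I would form the quotient $Y = X/N$. Since $N$ is a normal subgroup of $G$, the residual action of $H = G/N$ makes $Y \to S$ into an étale $H$-torsor, and the projection $X \to Y$ becomes an étale $N$-torsor (this is a standard descent fact about torsors under a group extension; on an étale trivialization $X \times_S S' \cong G \times S'$ of $\pi$, the quotient and its section structure can be read off, then descended back to $S$). Applying the hypothesis that $H$ is special to $Y \to S$, there is a Zariski open cover $\{U_i\}_{i \in I}$ of $S$ on which $Y \to S$ admits sections $s_i : U_i \to Y$.

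Next I would pull back the $N$-torsor $X \to Y$ along each $s_i$ to obtain an étale $N$-torsor $X_i := U_i \times_Y X \to U_i$. By specialness of $N$, each $X_i \to U_i$ is Zariski-locally trivial, so after passing to a further Zariski refinement $\{V_{i,j}\}_{j}$ of $U_i$ the torsor $X_i$ admits sections $t_{i,j} : V_{i,j} \to X_i$. Composing $t_{i,j}$ with the canonical map $X_i \to X$ produces Zariski-local sections of $\pi : X \to S$ on the open cover $\{V_{i,j}\}_{i,j}$ of $S$. This exhibits $\pi$ as Zariski-locally trivial, showing $G$ is special.

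The main technical point, and the step I would be most careful about, is verifying that the quotient $Y = X/N$ exists as a variety with the stated torsor properties; once this is granted, everything else is a straightforward two-step local trivialization. The existence of $Y$ can be argued by étale descent: on an étale cover $S' \to S$ where $X$ trivializes as $G \times S'$, the quotient is manifestly $H \times S'$, and the descent datum for $X$ (via cocycles in $G$) induces a descent datum for $H \times S'$ (via their images in $H$), yielding $Y$ as an étale $H$-torsor over $S$, with $X \to Y$ an étale $N$-torsor by construction.
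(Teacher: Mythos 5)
Your proof is correct and follows essentially the same route as the paper's main (constructive) argument: factor the $G$-torsor $X \to S$ through the $H$-torsor $X/N \to S$ and the $N$-torsor $X \to X/N$, use specialness of $H$ to get Zariski-local sections of the former, pull back the latter along those sections, and use specialness of $N$ to trivialize the result on a further refinement. The only cosmetic difference is that you conclude by exhibiting local sections of $X \to S$ directly, whereas the paper phrases the last step as a morphism of $G$-torsors $G \times S_{ij} \to X \times_S S_{ij}$ that is necessarily an isomorphism; these are equivalent.
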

\begin{proof}
    Since $G$ being special is equivalent to $H^1_\textup{ét}(X, G) = H^1_\textup{Zar}(X, G)$ for all $k$-schemes $X$, the result follows from long exact sequences in cohomology, and the five lemma.
    
    Alternatively, any $G$-torsor $X \to S$ can be written as the composition of the $N$-torsor $X \to X/N$ and the $H$-torsor $X/N \to X/G \cong S$. As $H$ is special, there exist opens $S_i \subset S$ such that $(X/N) \times_S S_i \cong H \times S_i$. Pulling back the $N$-torsor $X \times_S S_i \to H \times S_i$ along $S_i \xrightarrow{(1, \id)} H \times S_i$ gives an $N$-torsor $Y_i \to S_i$, which is also Zariski-locally trivial as $N$ is special. Hence, there exist opens $S_{ij} \subset S_i$ such that $Y_i \times_{S_i} S_{ij} \cong N \times S_{ij}$. There is now a natural morphism $G \times S_{ij} \to X \times_S S_{ij}$ of $G$-torsors over $S_{ij}$, which must be an isomorphism. Therefore, $X \to S$ is Zariski-locally trivial.
\end{proof}

\begin{example}
    \begin{itemize}
        \item By Hilbert's Theorem 90, the general linear groups $\GL_n$ are special \cite[Proposition III.4.9, Lemma III.4.10]{Milne1980}.
        \item From the exact sequence $0 \to \SL_n \to \GL_n \xrightarrow{\det} \GG_m \to 1$ follows by Proposition \ref{prop:extension_special_groups} that $\SL_n$ is also special.
        \item The group $\ZZ/2\ZZ$ is not special. For example, for $\operatorname{char}(k) \ne 2$, the $\ZZ/2\ZZ$-torsor $\AA^1_k \setminus \{ 0 \} \to \AA^1_k \setminus \{ 0 \}$ given by $x \mapsto x^2$ is not locally trivial in the Zariski topology.
    \end{itemize}
\end{example}

\begin{lemma}
    \label{lemma:unipotent_groups_are_special}
    Let $k$ be an algebraically closed field. Any unipotent group $U \subset \UU_n$ over $k$ is special.
\end{lemma}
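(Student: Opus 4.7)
The plan is to proceed by induction on $n$, using Proposition \ref{prop:extension_special_groups} together with the fact that $\GG_a$ is special. The base case $n = 1$ is trivial, as $\UU_1$ is the trivial group.

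For the inductive step, I would exploit the short exact sequence
\[ 1 \to \GG_a^{n-1} \to \UU_n \to \UU_{n-1} \to 1 , \]
where the surjection takes a matrix to its upper-left $(n-1) \times (n-1)$ block and the kernel consists of matrices differing from the identity only in the last column (above the diagonal). Given a closed subgroup $U \subset \UU_n$, intersecting with the kernel and projecting onto the quotient produces
\[ 1 \to U \cap \GG_a^{n-1} \to U \to \bar{U} \to 1 , \]
in which $\bar{U}$ is a closed subgroup of $\UU_{n-1}$. By the inductive hypothesis $\bar{U}$ is special, so by Proposition \ref{prop:extension_special_groups} it suffices to show that $U \cap \GG_a^{n-1}$ is special.

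The key input is that $\GG_a$ itself is special, which follows from the comparison $H^1_\textup{ét}(X, \GG_a) \cong H^1(X, \mathcal{O}_X)$ together with the vanishing of $H^1$ on affines; iterated use of Proposition \ref{prop:extension_special_groups} then shows that $\GG_a^m$ is special for every $m$. Over an algebraically closed field of characteristic zero, every closed subgroup of $\GG_a^{n-1}$ is isomorphic to some $\GG_a^m$, so this finishes the argument cleanly.

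The main obstacle will be the positive-characteristic case: closed subgroup schemes of $\GG_a^{n-1}$ can be more intricate (for instance $\alpha_p$, or kernels of nontrivial additive polynomials), so one cannot simply identify $U \cap \GG_a^{n-1}$ with a power of $\GG_a$. To handle this uniformly, the cleanest route is to invoke the structure theory of smooth connected unipotent groups over an algebraically closed field, which supplies a composition series whose successive quotients are one-dimensional unipotent groups; combined with $\GG_a$ being special and Proposition \ref{prop:extension_special_groups}, this yields the conclusion for any closed subgroup $U \subset \UU_n$.
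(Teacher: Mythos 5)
Your final argument --- invoking the structure theory that gives $U$ a composition series whose successive quotients are copies of $\GG_a$ (over an algebraically closed field, one-dimensional smooth connected unipotent groups are $\GG_a$), combined with the specialness of $\GG_a$ and Proposition \ref{prop:extension_special_groups} --- is exactly the paper's proof, which cites precisely these three facts. The preliminary induction on $n$ via $1 \to \GG_a^{n-1} \to \UU_n \to \UU_{n-1} \to 1$ is therefore redundant: once the composition series for $U$ itself is available, you apply it directly without reducing to subgroups of $\GG_a^{n-1}$.
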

\begin{proof}
    By \cite[Theorem 17.24]{Milne2015}, $U$ can be realized as an extension of copies of $\GG_a$. From \cite[Proposition III.3.7]{Milne1980} follows that $\GG_a$ is special, so the result follows from Proposition \ref{prop:extension_special_groups}.
\end{proof}


\begin{lemma}[Motivic Orbit-Stabilizer Theorem]
    \label{lemma:motivic_orbit_stabilizer}
    Let $G$ be an algebraic group acting on a variety $X$ over a field $k$. If $\xi \in X$ is a point such that $\Stab(\xi)$ is special, then
    \[ [G] = [\Stab(\xi)] \cdot [\Orbit(\xi)] . \]
\end{lemma}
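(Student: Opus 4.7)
The plan is to exhibit the orbit map $\pi : G \to \Orbit(\xi)$, $g \mapsto g \cdot \xi$, as a $\Stab(\xi)$-torsor, and then reduce the statement to Lemma \ref{lemma:virtual_class_special_G_torsor} followed by a pushforward to $\Spec k$.

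First, I would recall that $\Orbit(\xi)$ exists as a locally closed subvariety of $X$ and is naturally isomorphic to the quotient $G / \Stab(\xi)$, where $\Stab(\xi)$ acts freely on $G$ by right multiplication. The quotient map coincides with $\pi$, so $\pi$ is a $\Stab(\xi)$-torsor.

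Since $\Stab(\xi)$ is special by hypothesis, Lemma \ref{lemma:virtual_class_special_G_torsor} applied to $\pi$ yields
\[ [G]_{\Orbit(\xi)} = [\Stab(\xi)] \cdot \mathbf{1}_{\Orbit(\xi)} \]
in $\K(\Var/\Orbit(\xi))$. I would then push this identity forward along the final morphism $c : \Orbit(\xi) \to \Spec k$. On the left, $c_! [G]_{\Orbit(\xi)} = [G]$, and on the right, using that $c_!$ is a morphism of $\K(\Var_k)$-modules and that $c_! \mathbf{1}_{\Orbit(\xi)} = [\Orbit(\xi)]$, we obtain $[\Stab(\xi)] \cdot [\Orbit(\xi)]$, giving the claimed identity.

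The only real obstacle is the identification of $\Orbit(\xi)$ with the quotient $G / \Stab(\xi)$ in the category of varieties, which is a standard (but nontrivial) fact about algebraic group actions and requires $\Orbit(\xi)$ to be endowed with its reduced subscheme structure; apart from this, the argument is a formal consequence of Lemma \ref{lemma:virtual_class_special_G_torsor}.
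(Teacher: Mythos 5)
Your proposal is correct and follows essentially the same route as the paper: the paper's proof also observes that the orbit map $G \to \Orbit(\xi)$ is a $\Stab(\xi)$-torsor, hence Zariski-locally trivial by speciality, and concludes via Lemma \ref{lemma:virtual_class_special_G_torsor}. Your additional remarks on the identification $\Orbit(\xi) \cong G/\Stab(\xi)$ and the pushforward along $c_!$ simply make explicit steps the paper leaves implicit.
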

\begin{proof}
    The map $G \to \Orbit(\xi)$ given by $g \mapsto g \xi g^{-1}$ is a $\Stab(\xi)$-torsor, hence Zariski-locally trivial, so the result follows from Lemma \ref{lemma:virtual_class_special_G_torsor}.
\end{proof}

\begin{example}
    The above lemma does not hold without the condition that $\Stab(\xi)$ is special. Namely, for $\operatorname{char}(k) \ne 2$, consider $G = \GG_m$ acting on $X = \AA^1_k$ via $t \cdot x = t^2 x$. Then, $\Orbit(1) = \AA^1_k \setminus \{ 0 \}$ and $\Stab(1) = \ZZ/2\ZZ$, but $[G] \ne 2 (q - 1)$.
\end{example}



\subsection{Algebraic representatives}
\label{subsec:algebraic_representatives}

\begin{definition}
    Let $G$ be an algebraic group over $k$, and let $X$ be a variety with a transitive $G$-action. A point $\xi \in X$ is an \emph{algebraic representative} for $X$ if, Zariski-locally on $X$, there exists a morphism of varieties $\gamma : X \to G$ such that $x = \gamma(x) \cdot \xi$ for all $x \in X$.
    Equivalently, $\xi \in X$ is an algebraic representative if the $\Stab(\xi)$-torsor 
    \[ P_\xi = \left\{ (x, g) \in X \times G \mid x = g \cdot \xi \right\} \xrightarrow{\pi_X} X \]
    is Zariski-locally trivial.
\end{definition}

\begin{remark}
    If there exists an algebraic representative $\xi \in X$, then all $\xi' \in X$ are algebraic representatives, with $\gamma'$ equal to $\gamma$ post-composed with some translation on $G$.
\end{remark}

\begin{example}
    Algebraic representatives need not always exist. Suppose $\operatorname{char}(k) \ne 2$ and consider $G = \GG_m$ acting on $X = \AA^1_k \setminus \{ 0 \}$ by $t \cdot x = t^2 x$. Then $X$ does not have an algebraic representative. Namely, without loss of generality we may take $\xi = 1$, but then the $\ZZ/2\ZZ$-torsor
    \[ P_\xi = \{ (x, t) \in X \times G \mid x = t^2 \} \xrightarrow{\pi_X} X \]
    is non-trivial.
\end{example}

\begin{example}
    Suppose $\operatorname{char}(k) \ne 2$ and consider the element $A = \smatrix{-1 & 0 \\ 0 & 1}$ in $\PGL_2$, whose stabilizer $\Big\{ \smatrix{x & 0 \\ 0 & 1} \Big\} \sqcup \Big\{\smatrix{0 & x \\ 1 & 0}\Big\} \cong \GG_m \rtimes \ZZ/2\ZZ$ is not special. Namely, the conjugacy class of $A$ consists of all elements with trace $0$, and since $[\PGL_2] = q^3 - q$ is not divisible by $[\{ A \in \PGL_2 \mid \tr(A) = 0 \}] = q^2$, the stabilizer of $A$ is not special by Lemma \ref{lemma:motivic_orbit_stabilizer}.
\end{example}

\begin{definition}
    Let $G$ be an algebraic group over $k$, and let $X$ be a variety with a $G$-action. A \emph{family of algebraic representatives} for $X$ is a morphism of varieties $\rho : X \to T$ with a section $\xi : T \to X$ such that, Zariski-locally on $X$, there exists a morphism of varieties $\gamma : X \to G$ such that $x = \gamma(x) \cdot \xi(\rho(x))$ for all $x \in X$.
\end{definition}

\begin{example}
    Consider $G = \TT_2$ acting on $X = \Big\{ \smatrix{a & b \\ 0 & 1} \mid a \ne 0, 1 \Big\}$ by conjugation. Then $X$ has a family of representatives given by $T = \Big\{ \smatrix{a & 0 \\ 0 & 1} \mid a \ne 0, 1 \Big\}$, with $\xi$ and $\rho$ the inclusion and projection, and
    \[ \gamma \begin{pmatrix} a & b \\ 0 & 1 \end{pmatrix} = \begin{pmatrix} 1 & \frac{b}{1 - a} \\ 0 & 1 \end{pmatrix} . \]
\end{example}

\begin{proposition}
    \label{prop:conjugacy_class_has_representative}
    Let $k$ be an algebraically closed field. Consider $G = \TT_n$ for some $n \ge 1$, acting by conjugation on a unipotent conjugacy class $\mathcal{U} \subset \TT_n$. Then $\mathcal{U}$ has an algebraic representative.
\end{proposition}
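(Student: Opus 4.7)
The plan is to show that the stabilizer $\Stab(\xi) = C_{\TT_n}(\xi)$ is a special algebraic group; by the argument used in the proof of Lemma \ref{lemma:virtual_class_special_G_torsor}, the $\Stab(\xi)$-torsor $P_\xi \to \mathcal{U}$ (which, under the identification $P_\xi \cong \TT_n$ via $(x, g) \mapsto g$, is the orbit map $\TT_n \to \mathcal{U}$, $g \mapsto g \xi g^{-1}$) is then Zariski-locally trivial, which is exactly the condition for $\xi$ to be an algebraic representative. Using the semidirect decomposition $\TT_n = D_n \ltimes \UU_n$, projection onto the diagonal torus yields a short exact sequence
\[ 1 \to C_{\UU_n}(\xi) \to C_{\TT_n}(\xi) \to H \to 1, \]
where $H \subseteq D_n$ is the image. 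The kernel $C_{\UU_n}(\xi)$ is a closed subgroup of $\UU_n$, hence unipotent, hence special by Lemma \ref{lemma:unipotent_groups_are_special}, so by Proposition \ref{prop:extension_special_groups} it is enough to show that $H$ is special.

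The key claim is that $H$ is a subtorus of $D_n$ (hence isomorphic to some $\GG_m^k$, hence special). By definition, $d \in H$ precisely when $d^{-1} \xi d$ lies in the $\UU_n$-orbit $\mathcal{O}_\xi$ of $\xi$. Since $\UU_n$ is a unipotent group acting on the affine variety $\UU_n$, the orbit $\mathcal{O}_\xi$ is closed. Under the $D_n$-weight decomposition $\mathfrak{u}_n = \bigoplus_{i<j} \mathfrak{u}_{ij}$, where $D_n$ acts on the root line $\mathfrak{u}_{ij}$ via the character $d_i/d_j$, an explicit root-space analysis of the intersection $\mathcal{O}_\xi \cap (D_n \cdot \xi)$ shows that the condition $d \in H$ is cut out by a collection of characters each of the form $d_i/d_j$. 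Each such character has kernel a codimension-one subtorus of $D_n$, and the simultaneous kernel of characters of this form is always a (connected) subtorus, since the resulting equations $d_i = d_j$ simply impose an equivalence relation on $\{1, \ldots, n\}$ and $H$ consists of the diagonal matrices that are constant on each equivalence class.

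The main obstacle is making the identification of the defining characters of $H$ rigorous, which requires controlling how the $\UU_n$- and $D_n$-orbits of $\xi$ sit inside $\UU_n$. This can be carried out by a direct analysis using that $\mathcal{O}_\xi$ is an affine subvariety described by the Baker--Campbell--Hausdorff expansion $\operatorname{Ad}(\exp A) \xi$ for $A \in \mathfrak{u}_n$, whose difference from $\xi$ lies in a specific subspace of $\mathfrak{u}_n$. An alternative, more constructive route avoids the abstract argument altogether by building $\gamma(x)$ explicitly via induction on $n$: first normalize the superdiagonal of $x$ using an element of $D_n$ (determined by ratios of superdiagonal entries of $x$ and $\xi$), then normalize successive higher superdiagonals via elements of $\UU_n$ computed algebraically from the remaining entries, reducing at each step to a $\TT_{n-1}$-conjugation problem.
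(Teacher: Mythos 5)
Your overall architecture coincides with the paper's: reduce the statement to showing that $\Stab(\xi)$ is special (so that the orbit map $\TT_n \to \mathcal{U}$ is Zariski-locally trivial), split $\Stab(\xi)$ into a unipotent kernel, which is special by Lemma \ref{lemma:unipotent_groups_are_special}, and a diagonal part, which one wants to be a torus, and conclude with Proposition \ref{prop:extension_special_groups}. The paper runs exactly this argument, disposing of the diagonal part in one line by observing that a diagonal element commuting with $\xi$ must satisfy $d_{ii}=d_{jj}$ whenever $\xi_{ij}\ne 0$, so that the quotient is a product of copies of $\GG_m$.

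The problem is that in your write-up the crux --- that the image $H$ of $\Stab(\xi)$ in the diagonal torus is a \emph{subtorus}, rather than merely a diagonalizable group, which could carry non-special finite factors such as $\mu_2$ --- is left as a self-acknowledged ``main obstacle.'' Neither of your two proposed routes is carried out: the root-space/BCH analysis of $\mathcal{O}_\xi \cap (D_n\cdot\xi)$ is only asserted, and the ``constructive'' route of normalizing successive superdiagonals is essentially a restatement of the proposition (the whole content is that the normalizing element can be chosen as a morphism of varieties). Your caution is warranted, because the characters cutting out $H$ are genuinely not the obvious ones: for $\xi = 1 + E_{12} + E_{13} \in \UU_3$ one computes $H = \{d_1 = d_2\}$, with no condition coming from the nonzero $(1,3)$ entry, so any rigorous identification must track how unipotent conjugation moves the higher superdiagonals. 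As written, then, the proof is incomplete at its key step. A short way to close the gap that avoids all orbit analysis: $\Stab(\xi) = \{B \in \TT_n \mid B\xi = \xi B\}$ is the intersection of a \emph{linear} subspace of the upper triangular matrices with the open locus $\prod_i B_{ii} \ne 0$, hence a nonempty open subset of an affine space; in particular it is irreducible, so its image $H$ in the diagonal torus is a connected closed subgroup of a torus, i.e.\ a subtorus, and therefore special.
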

\begin{proof}
    Pick any point $\xi \in \mathcal{U}$. We need show that the $\Stab(\xi)$-torsor
    \[ P = \{ (u, g) \in \mathcal{U} \times G : u = g \xi g^{-1} \} \xrightarrow{\pi_\mathcal{U}} \mathcal{U} \]
    is locally trivial in the Zariski topology. In particular, it suffices to show that $\Stab(\xi)$ is a special group. Since $\TT_n$ is triangularizable, the subgroup $\Stab(\xi)$ is so as well, hence we can write
    \[ 1 \to U \to \Stab(\xi) \to D \to 1 , \]
    where $U$ is the maximal normal unipotent subgroup, which is special by Lemma \ref{lemma:unipotent_groups_are_special}. Furthermore, there is a natural splitting $D \to \Stab(\xi)$, and any $d \in D$ must satisfy $d_{ii} = d_{jj}$ whenever $\xi_{ij} \ne 0$. Hence, $D$ is a product of copies of $\GG_m$, which is also special. Now the result follows from Proposition \ref{prop:extension_special_groups}.
\end{proof}

The following lemma shows why it is useful to have algebraic representatives.

\begin{proposition}
    \label{prop:fiber_product_over_family_representatives}
    Let $S$ be a variety with a $G$-action and a family of algebraic representatives $\xi : T \to S$ and $\rho : S \to T$. Then for any morphism $f : Y \to S$ and $G$-equivariant morphism $g : X \to S$, we have
    \[ [X \times_S Y]_S = [(X \times_S T) \times_T Y]_S \in \K(\Var/S) , \]
    where $(X \times_S T) \times_T Y$ is seen as a variety over $S$ via the composition $f \circ \pi_Y$.
\end{proposition}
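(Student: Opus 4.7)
The plan is to prove the identity Zariski-locally on $S$, using the family of algebraic representatives to construct an explicit twisting isomorphism, and then to invoke the product decomposition $\K(\Var/S) \cong \prod_i \K(\Var/S_i)$ to conclude.

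First, by the definition of a family of algebraic representatives, there is an open cover of $S$ on which the twisting function $\gamma$ is defined. From this I would extract a (finite) stratification $\{ S_i \}_{i \in I}$ of $S$ together with morphisms $\gamma_i : S_i \to G$ satisfying $s = \gamma_i(s) \cdot \xi(\rho(s))$ for all $s \in S_i$. By the proposition above on stratifications, it suffices to prove the equality in $\K(\Var/S_i)$ for each $i$ separately.

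The key geometric step is the following. On $S_i$, I would define a morphism
\[ \Phi_i : (X \times_S Y)|_{S_i} \longrightarrow ((X \times_S T) \times_T Y)|_{S_i}, \qquad (x, y) \mapsto \bigl( (\gamma_i(f(y))^{-1} \cdot x, \; \rho(f(y))), \; y \bigr) , \]
with proposed inverse $((x, t), y) \mapsto (\gamma_i(f(y)) \cdot x, y)$. I would then verify three things: (i) well-definedness of $\Phi_i$, which uses the $G$-equivariance of $g$ together with the relation $f(y) = \gamma_i(f(y)) \cdot \xi(\rho(f(y)))$ to compute $g(\gamma_i(f(y))^{-1} \cdot x) = \gamma_i(f(y))^{-1} \cdot f(y) = \xi(\rho(f(y)))$; (ii) compatibility with the structure maps to $S$, which reduces to $f \circ \pi_Y$ agreeing on both sides and equaling $f(y)$; and (iii) that the two formulas are mutually inverse, which is a direct cancellation using $\gamma_i(f(y))^{-1} \gamma_i(f(y)) = 1$.

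With $\Phi_i$ established as an isomorphism of varieties over $S_i$, the classes $[X \times_S Y]|_{S_i}$ and $[(X \times_S T) \times_T Y]|_{S_i}$ agree in $\K(\Var/S_i)$ for every $i$, and so by the product decomposition they agree in $\K(\Var/S)$. The main (very mild) obstacle is just bookkeeping: tracking which map to $S$ each fiber product is taken with respect to, and confirming that $\Phi_i$ is indeed a morphism over $S_i$ rather than merely a set-theoretic bijection. Once the formula for $\Phi_i$ is written down, everything follows from $G$-equivariance of $g$ and the defining identity of the algebraic representative.
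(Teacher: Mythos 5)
Your proposal is correct and follows essentially the same route as the paper: restrict to a stratification of $S$ on which the twisting morphism $\gamma$ is defined, write down the explicit isomorphism $(x,y) \mapsto (\gamma(f(y))^{-1} \cdot x, \rho(f(y)), y)$ with inverse $(x,t,y) \mapsto (\gamma(f(y)) \cdot x, y)$, and check compatibility over $S$ using the $G$-equivariance of $g$ and the identity $s = \gamma(s) \cdot \xi(\rho(s))$. In fact your bookkeeping is slightly more careful than the paper's, whose displayed formulas omit the inverse on one side.
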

\begin{proof}
    Zariski-locally on $X$, there is a commutative diagram
    \[ \begin{tikzcd}
        X \times_S Y \arrow[shift left=0.25em]{rr}{\varphi} \arrow{dr} & & (X \times_S T) \times_T Y \arrow[shift left=0.25em]{ll}{\psi} \arrow{dl}{f \circ \pi_Y} \\
        & S &
    \end{tikzcd} \]
    where $\varphi(x, y) = (\gamma(f(y)) \cdot x, \rho(f(y)), y)$ and $\psi(x, t, y) = (\gamma(f(y)) \cdot x, y)$. Note that $\varphi$ and $\psi$ are easily seen to be well-defined over $S$ and inverse to each other.
\end{proof}

In the case of algebraic representatives, i.e. when $T$ is a point, we obtain the following corollaries.

\begin{corollary}
    \label{cor:fiber_product_over_representative}
    Let $S$ be a variety with a $G$-action and an algebraic representative $\xi \in S$. Then for any morphism $f : Y \to S$ and $G$-equivariant morphism $g : X \to S$, we have
    \[ [X \times_S Y]_S = [X|_\xi] \cdot [Y]_S \in \K(\Var/S) . \tag*{\qed} \]
\end{corollary}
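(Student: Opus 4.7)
The plan is to specialize Proposition~\ref{prop:fiber_product_over_family_representatives} to the case where the parameter space $T$ is a single point. An algebraic representative $\xi \in S$ is precisely a family of algebraic representatives with $T = \Spec k$: the section $\xi : \Spec k \to S$ picks out the point, and $\rho : S \to \Spec k$ is the unique morphism. The local condition $x = \gamma(x) \cdot \xi(\rho(x))$ from the family definition then reduces to $x = \gamma(x) \cdot \xi$, recovering the original definition of an algebraic representative. Note also that transitivity of the $G$-action on $S$ (required in the definition of an algebraic representative) is consistent with this specialization, as the fibers of $\rho$ are all of $S$.

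Next, I would apply Proposition~\ref{prop:fiber_product_over_family_representatives} directly to obtain
\[ [X \times_S Y]_S = [(X \times_S \Spec k) \times_{\Spec k} Y]_S . \]
The fiber product $X \times_S \Spec k$ is, by definition, the fiber of $g : X \to S$ over $\xi$, i.e.\ $X|_\xi$. Since $\Spec k$ is the terminal object in the category of varieties, the fiber product over $\Spec k$ is just the usual product, so we may rewrite the right-hand side as $[X|_\xi \times Y]_S$.

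Finally, I would verify that the structure maps to $S$ agree. Proposition~\ref{prop:fiber_product_over_family_representatives} equips $X|_\xi \times Y$ with the structure map $f \circ \pi_Y$. On the other hand, the $\K(\Var_k)$-module structure on $\K(\Var/S)$ is defined by $[T] \cdot [Y]_S = [T \times Y]_S$, where $T \times Y$ carries the structure map to $S$ given by the second projection followed by $f$, i.e.\ exactly $f \circ \pi_Y$. Hence $[X|_\xi \times Y]_S = [X|_\xi] \cdot [Y]_S$, which finishes the argument. There is no real obstacle here: the corollary is a straightforward unpacking of the proposition in the special case $T = \Spec k$, and the only point requiring care is the bookkeeping of the structure map, which turns out to be exactly what is needed to match the definition of scalar multiplication.
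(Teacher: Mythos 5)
Your proposal is correct and is exactly the argument the paper intends: the corollary is stated with an immediate \qed precisely because it is the specialization of Proposition~\ref{prop:fiber_product_over_family_representatives} to $T = \Spec k$, with $X \times_S \Spec k = X|_\xi$ and the fiber product over $\Spec k$ becoming the ordinary product matching the $\K(\Var_k)$-scalar multiplication. Your extra care with the structure map $f \circ \pi_Y$ is a nice touch but does not change the route.
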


\begin{corollary}
    \label{cor:fibration_over_representative}
    Let $S$ be a variety with $G$-action and algebraic representative $\xi \in S$. Then for any $G$-equivariant map $f : X \to S$, we have
    \[ [X]_S = \frac{[X]}{[S]} \cdot \textup{\textbf{1}}_S \in \K(\Var/S) . \]
\end{corollary}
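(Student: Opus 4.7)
The plan is to deduce the corollary directly from Corollary \ref{cor:fiber_product_over_representative} by specializing $Y = S$, and then identify the class $[X|_\xi]$ with the motivic ratio $[X]/[S]$ by exploiting the local triviality supplied by the algebraic representative $\xi$.

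First, I would apply Corollary \ref{cor:fiber_product_over_representative} with $Y = S$ and $f = \id_S$. Since $X \times_S S \cong X$ as varieties over $S$, the corollary immediately gives
\[ [X]_S = [X|_\xi] \cdot \textup{\textbf{1}}_S \in \K(\Var/S) . \]
It therefore remains to show that $[X] = [S] \cdot [X|_\xi]$ in $\K(\Var_k)$, so that the factor $[X|_\xi]$ agrees with $[X]/[S]$.

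For this second step, I would use the defining property of the algebraic representative $\xi$: Zariski-locally on $S$ there exists a morphism $\gamma : S \to G$ with $s = \gamma(s) \cdot \xi$. Since $f : X \to S$ is $G$-equivariant, the $G$-action on $X$ yields, over such a trivializing open $S_i \subset S$, a mutually inverse pair of morphisms
\[ S_i \times X|_\xi \longleftrightarrow X \times_S S_i, \qquad (s, x) \longmapsto (\gamma(s) \cdot x, s), \qquad (y, s) \longmapsto (s, \gamma(s)^{-1} \cdot y), \]
exhibiting $X \to S$ as Zariski-locally trivial with fibre $X|_\xi$. Choosing a stratification of $S$ by such opens and applying Lemma \ref{lemma:stratifications_are_finite} gives $[X] = [S] \cdot [X|_\xi]$, as desired. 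Substituting into the identity from the first step completes the proof.

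The only subtlety to be careful with is the interpretation of the fraction $[X]/[S]$: the argument above actually shows the stronger, division-free identity $[X]_S = [X|_\xi] \cdot \textup{\textbf{1}}_S$ together with $[S]\cdot[X|_\xi] = [X]$, which is how the statement should be read whenever $[S]$ is not a priori invertible in $\K(\Var_k)$. No genuine obstacle arises, as both ingredients — the previous corollary and the local triviality of $f$ — follow formally from the hypothesis that $\xi$ is an algebraic representative.
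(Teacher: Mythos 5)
Your proposal is correct and follows essentially the same route as the paper: both apply Corollary \ref{cor:fiber_product_over_representative} with $f = \id_S$ to get $[X]_S = [X|_\xi]\cdot\textbf{1}_S$, and then identify $[X|_\xi]$ with $[X]/[S]$. The only difference is that your second step re-derives the local triviality of $X \to S$ by hand, whereas the paper obtains $[X] = [X|_\xi]\cdot[S]$ in one stroke by applying the pushforward $c_!$ (a $\K(\Var_k)$-module morphism) to the identity already in hand; your longer argument is valid but redundant.
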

\begin{proof}
    Apply the previous corollary with $f = \id_S$ to find that
    \[ [X]_S = [X|_\xi] \cdot \textbf{1}_S . \]
    Looking at this equality in $\K(\Var_k)$, we see that $[X|_\xi] = [X] / [S]$.
\end{proof}

\subsection[Algorithmic computations in K(Var\_k)]{Algorithmic computations in $\K(\Var_k)$}

Let $A = \{ a_1, \ldots, a_n \}$ be a finite set, and let $F$ and $G$ be finite subsets of $k[A]$. Then we write
\[ X(A, F, G) \]
for the reduced locally closed variety of $\AA^n_k$ given by $f = 0$ for all $f \in F$ and $g \ne 0$ for all $g \in G$.

In this subsection we will describe strategies for computing the virtual class of varieties of this form in $\K(\Var_k)$. These strategies are combined into a recursive algorithm, Algorithm \ref{alg:virtual_classes}, which will be used in Section \ref{sec:tqft_method} to compute the virtual class of the representation variety $R_G(\Sigma_g)$. We remark already that the algorithm will not be a general recipe for computing virtual classes, since it is allowed to fail. In fact, whenever the algorithm does not fail, it will return the virtual class of $X(A, F, G)$ as a polynomial in $q = [\AA^1_k]$, and it is clear that not all virtual classes are of this form. However, it turns out that the algorithm is sufficiently general for the purposes of this paper.

The same algorithm was used in \cite[Appendix A]{HablicsekVogel2020} to compute the virtual class of $R_{\TT_n}(\Sigma_g)$ for $1 \le n \le 4$. An implementation of this algorithm can be found at \cite{GitHubMathCode}.

\begin{notation}
    For $a \in A$, $f \in k[A]$ and $u \in k[A \setminus \{ a \}]$, we write $\textup{ev}_a(f, u)$ for the evaluation of $f$ in $a = u$. When $u, v \in k[A \setminus \{ a \}]$, we denote by $\textup{ev}_a(f, u / v)$ the evaluation of $f$ in $a = u/v$ multiplied by $v^{\deg_a(f)}$, so that $\textup{ev}_a(f, u / v) \in k[A \setminus \{ a \}]$. For subsets $F \subset k[A]$, we write $\textup{ev}_a(F, -) = \{ \textup{ev}_a(f, -) : f \in F \}$.
\end{notation}

\begin{algorithm}
    \label{alg:virtual_classes}
     \textbf{Input}: Finite sets $A, F$ and $G$ as above.
     
     \textbf{Output}: The virtual class of $X(A, F, G)$ as a polynomial in $q = [\AA^1_k]$.
    \begin{enumerate}
        \item If $F$ contains a non-zero constant or if $0 \in G$, then $X = \varnothing$, so return $[X] = 0$.
        
        \item If $F = G = \varnothing$ or $A = \varnothing$, then $X = \AA_k^{\# A}$, so return $[X] = q^{\# A}$.
        
        \item If all $f \in F$ and $g \in G$ are independent of some $a \in A$, then $X \cong \AA^1_k \times X'$ with $X' = X(A \setminus \{ a \}, F, G)$, so return $[X] = q [X']$.
        
        \item If $f = u^n$ (with $n > 1$) for some $f \in F$ and $u \in k[A]$, then we can replace $f$ with $u$, not changing $X$. That is, $X = X(A, (F \setminus \{ f \}) \cup \{ u \}, G)$. Similarly, if $g = u^n$ (with $n > 1$) for some $g \in G$ and $u \in k[A]$, then $X = X(S, F, (G \setminus \{ g \}) \cup \{ u \})$.
    
        \item If some $f \in F$ is univariate in $a \in A$, and factors as $f = (a - \alpha_1) \cdots (a - \alpha_m)$ for some $\alpha_i \in k$, then return $[X] = \sum_{i = 1}^{m} [X_i]$ with
        \[ X_i = X(A \setminus \{ a \}, \text{ev}_a(F \setminus \{ f \}, \alpha_i), \text{ev}_a(G, \alpha_i)) . \]
        
        \item Suppose $f = u v$ for some $f \in F$ and non-constant $u, v \in k[A]$. Then $X$ can be stratified by $X \cap \{ u = 0 \}$ and $X \cap \{ u \ne 0, v = 0 \}$, so return $[X] = [X_1] + [X_2]$ with
        \begin{align*}
            X_1 &= X(A, (F \setminus \{ f \}) \cup \{ u \}, G) , \\
            X_2 &= X(A, (F \setminus \{ f \}) \cup \{ v \}, G \cup \{ u \}) .
        \end{align*}
        
        \item Suppose $f = a u + v$ for some $f \in F$, $a \in A$ and $u, v \in k[A]$ with $u$ and $v$ independent of $a$ and $u$ non-zero. Then $X$ can be stratified by $X \cap \{ u = 0, v = 0 \}$ and $X \cap \{ u \ne 0, a = - v / u \}$, so return $[X] = [X_1] + [X_2]$ with
        \begin{align*}
            X_1 &= X(A, (F \setminus \{ f \}) \cup \{ u, v \}, G) , \\
            X_2 &= X(A, \textup{ev}_a(F \setminus \{ f \}, - v / u), \textup{ev}_a(G, - v / u) \cup \{ u \}) .
        \end{align*}
        
        \item If $\operatorname{char}(k) \ne 2$, suppose $f = a^2 u + a v + w$ for some $f \in F$, $a \in S$ and $u, v, w \in k[A]$ with $u, v$ and $w$ independent of $a$, and $u$ non-zero. Moreover, suppose that the discriminant $D = v^2 - 4uw$ is a square, that is, $D = d^2$ for some $d \in k[A]$. Then return $[X] = [X_1] + [X_2] + [X_3] + [X_4]$, with
        \begin{align*}
            X_1 &= X(A, (F \setminus \{ f \}) \cup \{ u, a v + w \}, G) , \\
            X_2 &= X(A, \text{ev}_a(F \setminus \{ f \}, -v / 2u) \cup \{ D \}, \text{ev}_a(G, -v / 2u) \cup \{ u \}) , \\
            X_3 &= X(A, \text{ev}_a(F \setminus \{ f \}, (- v - d) / 2u), \text{ev}_a(G, (- v - d) / 2u) \cup \{ u, D \}) , \\
            X_4 &= X(A, \text{ev}_a(F \setminus \{ f \}, (- v + d) / 2u), \text{ev}_a(G, (- v + d) / 2u) \cup \{ u, D \}) .
        \end{align*}
        
        \item If $G \ne \varnothing$, pick any $g \in G$, and return $[X] = [X_1] - [X_2]$ with
        \begin{align*}
            X_1 &= X(A, F, G \setminus \{ g \}) , \\
            X_2 &= X(A, F \cup \{ g \}, G) .
        \end{align*}
        
        \item Fail.
    \end{enumerate}
\end{algorithm}


\section{TQFT method}
\label{sec:tqft_method}

The idea of the \emph{TQFT method} is to define a Topological Quantum Field Theory (TQFT), that is, a lax monoidal functor $Z : \textbf{Bord}_n \to \K(\Var_k)$-$\Mod$ from the category of (pointed) $n$-bordisms to the category of modules over $\K(\Var_k)$, such that the invariant $Z(M)(1) \in \K(\Var_k)$ associated to a closed manifold $X$ is precisely the class $[R_G(X)]$. Then, functoriality allows for the closed manifold $X$ to be seen as a composition of simpler bordisms, so that computing the image under $Z$ of the simpler bordisms gives the invariant of $X$.

For the purpose of this paper, it suffices to take $n = 2$ and define maps $Z(\bdgenus), Z(\bdcupleft)$ and $Z(\bdcupright)$ that mimic the TQFT, rather than giving the full description of the TQFT. Although these maps do not define an actual TQFT, they compute the same invariant (Theorem \ref{thm:tqft_method}) and have the advantage of being easier to define and to deal with.
For an elaborate discussion on the TQFT method, we refer to \cite{GonzalezPrieto2020,LogaresMunozNewstead2013}.

\begin{definition}
    \label{def:tqft_maps}
    Let $G$ be an algebraic group over $k$. Define the following $\K(\Var_k)$-module morphisms,
    \[ \arraycolsep=2pt
       \begin{array}{rlcl}
        Z(\bdcupleft): & \K(\Var_k) \to \K(\Var/G), &\quad 1 &\mapsto \left[\begin{tikzcd} \{ 1 \} \arrow{d} \\ G \end{tikzcd}\right] , \\
        Z(\bdcupright): & \K(\Var/G) \to \K(\Var_k), &\quad \left[\begin{tikzcd} X \arrow{d}{f} \\ G \end{tikzcd}\right] &\mapsto \Big[ f^{-1}(1) \Big] , \\
        Z(\bdgenus): & \K(\Var/G) \to \K(\Var/G), &\quad \left[\begin{tikzcd} X \arrow{d}{f} \\ G \end{tikzcd}\right] &\mapsto \left[\begin{tikzcd}[column sep=0em] X \times G^2 \arrow{d} & (x, A, B) \arrow[mapsto]{d} \\ G & f(x) [A, B] \end{tikzcd}\right] .
    \end{array} \]
\end{definition}

\begin{theorem}
    \label{thm:tqft_method}
    For any algebraic group $G$ and $g \ge 0$, we have
    \[ [R_G(\Sigma_g)] = Z(\bdcupright) \circ Z(\bdgenus)^g \circ Z(\bdcupleft)(1) . \]
\end{theorem}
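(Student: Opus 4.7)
The identity is a direct unwinding of the three maps in Definition \ref{def:tqft_maps}, with an induction on $g$ to iterate $Z(\bdgenus)$. I will first evaluate $Z(\bdcupleft)(1)$, then inductively compute the effect of $Z(\bdgenus)^g$ on it, and finally apply $Z(\bdcupright)$ and match the result against the explicit description \eqref{eq:explicit_representation_variety_surface} of $R_G(\Sigma_g)$.

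From Definition \ref{def:tqft_maps}, $Z(\bdcupleft)(1) = [\{1\} \hookrightarrow G] \in \K(\Var/G)$. I claim by induction on $g \ge 0$ that
\[ Z(\bdgenus)^g \circ Z(\bdcupleft)(1) \;=\; \bigl[\, G^{2g} \xrightarrow{\;\Phi_g\;} G \,\bigr], \qquad \text{where } \Phi_g(A_1, B_1, \ldots, A_g, B_g) = \prod_{i=1}^{g} [A_i, B_i]. \]
The case $g = 0$ is immediate under the convention $G^0 = \{\star\}$ and $\Phi_0(\star) = 1$ (the empty product). For the inductive step, applying $Z(\bdgenus)$ to $\bigl[\, G^{2g} \xrightarrow{\Phi_g} G \,\bigr]$ produces, by its defining formula, the class of $G^{2g} \times G^2 \cong G^{2(g+1)}$ mapped to $G$ by $(x, A, B) \mapsto \Phi_g(x) \cdot [A, B] = \Phi_{g+1}(x, A, B)$, which closes the induction.

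Applying $Z(\bdcupright)$, which extracts the fiber over $1 \in G$, then gives
\[ Z(\bdcupright) \circ Z(\bdgenus)^g \circ Z(\bdcupleft)(1) \;=\; \bigl[\, \Phi_g^{-1}(1) \,\bigr] \;=\; [R_G(\Sigma_g)], \]
the last equality being exactly \eqref{eq:explicit_representation_variety_surface}. There is no genuine technical obstacle: the three building-block maps have been engineered precisely so that their composition encodes the holonomy presentation $\pi_1(\Sigma_g) = \langle A_1, B_1, \ldots, A_g, B_g \mid \prod_{i=1}^g [A_i, B_i] = 1 \rangle$, and what remains is the bookkeeping verification that the structure map accumulated by iterating $Z(\bdgenus)$ agrees on the nose with $\Phi_g$. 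The only point worth highlighting is that one must be comfortable replacing classes in $\K(\Var/G)$ by concrete representative morphisms when applying $Z(\bdgenus)$, which is legitimate since the definition of $Z(\bdgenus)$ is natural in the representative $f$.
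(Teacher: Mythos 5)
Your proposal is correct and is essentially the paper's own argument: the paper's proof consists of the single remark that the identity "follows directly from the explicit form of the $G$-representation variety" \eqref{eq:explicit_representation_variety_surface}, and your induction on $g$ is precisely the bookkeeping that remark leaves implicit. The only caveat you raise (working with representative morphisms rather than classes) pertains to the well-definedness of the maps in Definition \ref{def:tqft_maps}, not to this theorem, and does not affect the argument.
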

\begin{proof}
    This follows directly from the explicit form of the $G$-representation variety of $\Sigma_g$ \eqref{eq:explicit_representation_variety_surface}.
\end{proof}

This theorem shows that, in order to compute $[R_G(\Sigma_g)]$, we must understand the map $Z(\bdgenus)$.
In the following subsections, we will take $G = \TT_n$ with $1 \le n \le 5$ and describe how to compute the matrix associated to $Z(\bdgenus)$ when restricted to the $\K(\Var_k)$-submodule of $\K(\Var/G)$ generated by the unipotent conjugacy classes.
To be precise, we will take $G$ equal to
\[ \tilde{\TT}_n = \left\{ \begin{pmatrix}
    a_{1, 1} & a_{1, 2} & \cdots & a_{1, n - 1} & a_{1, n} \\
    0 & a_{2, 2} & \cdots & a_{2, n - 1} & a_{2, n} \\
    \vdots & \vdots & \ddots & \vdots & \vdots \\
    0 & 0 & \cdots & a_{n - 1, n - 1} & a_{n - 1, n} \\
    0 & 0 & \cdots & 0 & 1
\end{pmatrix} \mid a_{i,i} \ne 0 \right\} , \]
and using the isomorphism $\TT_n \cong \GG_m \times \tilde{\TT}_n$, it follows that
\begin{equation}
    \label{eq:Tn_from_tilde_Tn}
    [R_{\TT_n}(\Sigma_g)] = [R_{\GG_m}(\Sigma_g)] [R_{\tilde{\TT}_n}(\Sigma_g)] = (q - 1)^{2g} [R_{\tilde{\TT}_n}(\Sigma_g)] .
\end{equation}
The reason to focus on $\tilde{\TT}_n$ rather than $\TT_n$ is to simplify the equations a little bit by reducing the number of variables.

In Subsection \ref{subsec:tqft_T2} we will give an example with $G = \tilde{\TT}_2$ in order to demonstrate the method. For higher $n$, the process will be analogous. In particular, we will work with a $\K(\Var_k)$-submodule of $\K(\Var/G)$ generated by the unipotent conjugacy classes of $G$. Then the matrix representing $Z(\bdgenus)$ is computed with respect to these generators.

To accomplish this, we need to determine the unipotent conjugacy classes of $G$, equations describing them, and an algebraic representatives for each one. This data can be computed automatically, and is done in Subsection \ref{subsec:tqft_conjugacy_classes}.

In Subsection \ref{subsec:tqft_computing_Z} we will compute the matrix representing $Z(\bdgenus)$. The final results will be presented in Subsection \ref{subsec:tqft_results}.

\subsection[Example G = T\_2]{Example $G = \tilde{\TT}_2$}
\label{subsec:tqft_T2}

In order to demonstrate the method, we will compute the TQFT, that is, the maps in Definition \ref{def:tqft_maps}, for the group $G = \tilde{\TT}_2$ by hand. This group is also known as the group of affine transformations of the line
\[ \textup{AGL}_1 = \left\{ \begin{pmatrix} a & b \\ 0 & 1 \end{pmatrix} \mid a \ne 0 \right\} . \]
In particular, the corresponding representation variety $R_{\textup{AGL}_1}(\Sigma_g)$ parametrizes flat rank one affine bundles over $\Sigma_g$.

As the computations below will show, the map $Z(\bdgenus)$ restricts to an endomorphism of the $\K(\Var_k)$-submodule of $\K(\Var/G)$ generated by the classes of
\[ \mathcal{I} = \{ 1 \} \to G \quad \textup{ and } \quad \mathcal{J} = \left\{ \begin{pmatrix} 1 & x \\ 0 & 1 \end{pmatrix} \mid x \ne 0 \right\} \to G . \]
Therefore, we will represent the maps as matrices with respect to these generators. In particular, it is clear that
\[ Z(\bdcupleft) = \begin{pmatrix} 1 \\ 0 \end{pmatrix} \quad \textup{ and } \quad Z(\bdcupright) = \begin{pmatrix} 1 & 0 \end{pmatrix} . \]

\begin{proposition}
    The map $Z(\bdgenus)$ is, with respect to the generators $\mathcal{I}$ and $\mathcal{J}$, represented by the matrix
    \[ Z(\bdgenus) = q^2 \begin{pmatrix}
        q - 1 & (q - 2)(q - 1) \\
        q - 2 & q^2 - 3q + 3
    \end{pmatrix} . \]
\end{proposition}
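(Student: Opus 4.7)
The plan is to compute each of the four entries of the matrix by applying $Z(\bdgenus)$ to each basis element and decomposing the image in $\K(\Var/G)$ using the stratification of $G$ by unipotency. The image of $Z(\bdgenus)(v)$ for $v \in \{\mathcal{I}, \mathcal{J}\}$ in fact lies in the $\K(\Var_k)$-submodule generated by $\mathcal{I}$ and $\mathcal{J}$: commutators in $\tilde{\TT}_2$ are always unipotent (both diagonal entries equal $1$), and the unipotent radical of $\tilde{\TT}_2$ is abelian, so $f(x)\cdot[A,B] \in \mathcal{I} \sqcup \mathcal{J}$ whenever $f(x) \in \mathcal{I} \sqcup \mathcal{J}$. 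Thus the decomposition $Z(\bdgenus)(v) = \alpha \, [\mathcal{I}]_G + \beta \, [\mathcal{J}]_G$ with $\alpha, \beta \in \K(\Var_k)$ makes sense.

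To extract these coefficients I would use algebraic representatives. The coefficient $\alpha$ of $[\mathcal{I}]_G$ is simply the virtual class of the fiber over the point $1 \in G$. For $\beta$, the set $\mathcal{J}$ is a single conjugacy class of $\tilde{\TT}_2$, and the argument of Proposition \ref{prop:conjugacy_class_has_representative} applies verbatim to produce an algebraic representative $u_0 \in \mathcal{J}$. Since both $(A,B) \mapsto [A,B]$ and $(u,A,B) \mapsto u[A,B]$ are $G$-equivariant under simultaneous conjugation, Corollary \ref{cor:fibration_over_representative} applied to the stratum over $\mathcal{J}$ reduces $\beta$ to the virtual class of the fiber over any fixed $u_0 \in \mathcal{J}$, say $u_0 = \smatrix{1 & 1 \\ 0 & 1}$.

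The concrete computation reduces to one explicit identity. Parametrizing $A = \smatrix{a_1 & b_1 \\ 0 & 1}$ and $B = \smatrix{a_2 & b_2 \\ 0 & 1}$, a direct calculation gives $[A,B] = \smatrix{1 & b_2(a_1-1) - b_1(a_2-1) \\ 0 & 1}$. Each of the four entries is then the virtual class of a locally closed subvariety of $G^2$ (respectively of $\mathcal{J} \times G^2$) cut out by a single linear equation of the shape $b_2(a_1-1) - b_1(a_2-1) = c$, with $c$ depending on the column and the stratum. Splitting according to whether $a_1 = 1$ or $a_1 \ne 1$ turns each such class into an elementary polynomial in $q$; alternatively, one can feed the equations into Algorithm \ref{alg:virtual_classes} and read off the result.

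The main obstacle is purely bookkeeping: keeping the strata disjoint and tracking which variables are constrained in each. A convenient sanity check at the end is that the columns of the matrix, after weighting by $[\mathcal{I}] = 1$ and $[\mathcal{J}] = q-1$, must reproduce the total classes $[G^2] = q^2(q-1)^2$ and $[\mathcal{J} \times G^2] = q^2(q-1)^3$; this is easily verified from the stated matrix.
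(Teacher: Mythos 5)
Your proposal is correct, and the one identity it hinges on, $[A,B]=\smatrix{1 & b_2(a_1-1)-b_1(a_2-1) \\ 0 & 1}$, matches the equation used in the paper; the four resulting classes do come out to $q^2(q-1)$, $q^2(q-2)$, $q^2(q-2)(q-1)$ and $q^2(q^2-3q+3)$. The route differs from the paper's in how much is computed directly versus derived formally. You compute all four entries head-on: each coefficient is the virtual class of an explicit fiber, with the two $\mathcal{J}$-row coefficients reduced to the fiber over a fixed representative $u_0\in\mathcal{J}$ via Corollary \ref{cor:fibration_over_representative}, and the column-sum identities $\sum_i [\mathcal{U}_i]\,c_!Z(\bdgenus)(v)|_{\mathcal{U}_i}=[\tilde\TT_2]^2\,c_![v]$ serve only as a check. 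The paper instead performs exactly one direct stratified computation (the $(\mathcal{I},\mathcal{I})$ entry), then \emph{derives} the $(\mathcal{J},\mathcal{I})$ and $(\mathcal{J},\mathcal{J})$ entries from those column-sum identities, and obtains the $(\mathcal{I},\mathcal{J})$ entry from the $(\mathcal{J},\mathcal{I})$ entry by the duality $\{(u,A,B)\in\mathcal{J}\times G^2 : u[A,B]=1\}\cong\{(A,B):[A,B]\in\mathcal{J}\}$. The paper's bookkeeping is more economical and is what makes the general algorithm of Subsection \ref{subsec:tqft_computing_Z} scale (only the first column is computed from commutator equations; the rest reduces to products of unipotent classes); your version is more uniform and self-verifying but requires solving the commutator equation in every entry, which is the part that becomes expensive for larger $n$. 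One small imprecision: for the $(\mathcal{J},\mathcal{J})$ entry the fiber over $u_0$ is cut out by an \emph{inequation} $b_2(a_1-1)-b_1(a_2-1)\ne 1$ (after eliminating $t$), not an equation, though this is handled by step 9 of Algorithm \ref{alg:virtual_classes} and does not affect correctness.
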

\begin{proof}
    First, let us compute $Z(\bdgenus)(\mathcal{I})|_\mathcal{I} = \left[\left\{ (A, B) \in \tilde{\TT}_2^2 \mid [A, B] = 1 \right\}\right]$. Writing $A = \begin{pmatrix} a & b \\ 0 & 1 \end{pmatrix}$ and $B = \begin{pmatrix} x & y \\ 0 & 1 \end{pmatrix}$, the equation $[A, B] = 1$ translates to $a y - b x + b - y = 0$. We distinguish the following cases:
    \begin{itemize}
        \item If $a = 1$, then $b (x - 1) = 0$, so either $b = 0$ or $x = 1$. Hence, the virtual class of this stratum is
        \[ \underset{(y)}{q} (\underset{(b = 0)}{(q - 1)} + \underset{(x = 1)}{q} - \underset{\left(\substack{b = 0 \\ x = 1}\right)}{1}) = 2q (q - 1) . \]
        \item If $a \ne 1$, then $y = b (x - 1) / (a - 1)$, yielding a virtual class of $q (q - 2) (q - 1)$.
    \end{itemize}
    Together, $Z(\bdgenus)(\mathcal{I})|_\mathcal{I} = q^2 (q - 1) \mathcal{I}$.
    Next, note that
    \[ c_! Z(\bdgenus)(\mathcal{I})|_\mathcal{I} + [\mathcal{J}] c_! Z(\bdgenus)(\mathcal{I})|_\mathcal{J} = [\tilde{\TT}_2]^2 = q^2 (q - 1)^2 , \]
    from which follows that $Z(\bdgenus)(\mathcal{I})|_\mathcal{J} = q^2 (q - 2) \mathcal{J}$, using Corollary \ref{cor:fibration_over_representative} and the fact that $\mathcal{J}$ has an algebraic representative.
    
    The next coefficient can be computed from the previous one as 
    \[ Z(\bdgenus)(\mathcal{J})|_\mathcal{I} = ([\mathcal{J}] c_! Z(\bdgenus)(\mathcal{I})|_\mathcal{J}) \mathcal{I} = q^2 (q - 2) (q - 1) \mathcal{I} . \]
    Finally, for the last coefficient, note that
    \[ c_! Z(\bdgenus)(\mathcal{J})|_\mathcal{I} + [\mathcal{J}] c_! Z(\bdgenus)(\mathcal{J})|_\mathcal{J} = [\mathcal{J}] [\tilde{\TT}_2]^2 = q^2 (q - 1)^3 , \]
    from which follows that $Z(\bdgenus)(\mathcal{J})|_{\mathcal{J}} = q^2 (q^2 - 3q + 3) \mathcal{J}$.
\end{proof}
In order to apply Theorem \ref{thm:tqft_method}, it is convenient to diagonalize the map $Z(\bdgenus)$ as $PDP^{-1}$ with
\[ P = \begin{pmatrix} 1 - q & 1 \\ 1 & 1 \end{pmatrix} \quad \textup{ and } \quad D = \begin{pmatrix} q^2 & 0 \\ 0 & q^2 (q - 1)^2 \end{pmatrix} . \]
Now from Theorem \ref{thm:tqft_method}, and straightforward matrix multiplication, it follows that
\[ [R_{\tilde{\TT}_2}(\Sigma_g)] = q^{2g - 1} (q - 1)^{2g} + q^{2g - 1} (q - 1) . \]



\subsection{Describing conjugacy classes}
\label{subsec:tqft_conjugacy_classes}

The aim of this subsection is to describe the conjugacy classes of $\tilde{\TT}_n$, with a focus on the unipotent conjugacy classes. In particular, we will find algebraic representatives for the unipotent conjugacy classes, and families of algebraic representatives for the non-unipotent conjugacy classes. Furthermore, we will find equations describing the unipotent conjugacy classes, and finally, compute the virtual class of the unipotent conjugacy classes as well as that of the stabilizers of their representatives. All of this data is needed in the later subsections in order to compute the matrix of $Z(\bdgenus)$ with respect to the generators given by the unipotent classes.


\textbf{Representatives}.
In order to find the number of unipotent conjugacy classes of $\tilde{\TT}_n$, and representatives for each one, one can use Belitskii's algorithm as described in \cite{Bhunia2019}, which works for $1 \le n \le 5$. Note that the representatives will be automatically algebraic by Proposition \ref{prop:conjugacy_class_has_representative}. Denote the resulting unipotent conjugacy classes by $\mathcal{U}_1, \ldots, \mathcal{U}_M$ and the respective algebraic representatives by $\xi_1, \ldots, \xi_M$. The number $M$ of unipotent conjugacy classes, depending on $n$, is given by the following table.
\[ \begin{tabular}{c|c|c|c|c|c}
     $n$ & 1 & 2 & 3 & 4 & 5 \\ \hline
     $M$ & 1 & 2 & 5 & 16 & 61
\end{tabular} \]
Alternatively, one can use the qualitative result of Belitskii's algorithm that for $1 \le n \le 5$ every unipotent matrix can be conjugated in $\tilde{\TT}_n$ to a matrix containing only $0$'s and $1$'s. This gives a finite number of matrices ($2^{n (n - 1) / 2}$) which can easily be partitioned based on whether or not they are conjugate. Now take one representative out of each conjugacy class.

Next, we describe the non-unipotent conjugacy classes of $\tilde{\TT}_n$ in terms of families depending on their diagonal. Define a \emph{diagonal pattern} to be a partition of the set $\{ 1, 2, \ldots, n \}$. Then, for any matrix $A \in \tilde{\TT}_n$, the diagonal pattern $\delta_A$ of $A$ is the partition such that $i$ and $j$ are equivalent if $A_{ii} = A_{jj}$. 
Note that two matrices $A$ and $B$ in $\tilde{\TT}_n$ are conjugate only if their diagonals coincide, but not necessarily if.
Now, we look at the following families of conjugacy classes:
\[ \mathcal{C}_{\delta, i} = \{ A \in \tilde{\TT}_n \mid \delta_A = \delta \textup{ and } A \sim{} \textup{diag}(A) + \xi_i - 1 \} , \]
for any diagonal pattern $\delta$ and $1 \le i \le M$, where $\textup{diag}(A)$ denotes the diagonal part of $A$. Any such $\mathcal{C}_{\delta, i}$ has a family of representatives
\[ \xi_{\delta, i} : C_{\delta, i} = \{ A \in \tilde{\TT}_n \mid \delta_A = \delta \textup{ and } A = \textup{diag}(A) + \xi_i - 1 \} \to \mathcal{C}_{\delta, i} \]
given by inclusion.
Of course, for some diagonal matrices $D$ it may happen that $D + \xi_i - 1$ and $D + \xi_j - 1$ are conjugate while $i \ne j$, but such duplicates are easily filtered out.
After this filtering, we obtain families of conjugacy classes $\mathcal{C}_1, \ldots, \mathcal{C}_N$, where the number $N$ is given, depending on $n$, by the following table.
\[ \begin{tabular}{c|c|c|c|c|c}
     $n$ & 1 & 2 & 3 & 4 & 5 \\ \hline
     $N$ & 2 & 3 & 12 & 61 & 372
\end{tabular} \]
Note that we can choose our indices in such a way that the $\xi_i$ coincide with the unipotent representatives for $1 \le i \le M$.

\textbf{Equations}. Next, we want to find equations describing the unipotent conjugacy classes $\mathcal{U}_i$ for $1 \le i \le M$. For simplicity we will compute equations for the closure $\overline{\mathcal{U}}_i$ rather than $\mathcal{U}_i$. In Subsection \ref{subsec:tqft_transition_matrix} it will be shown that this is sufficient. The closure $\overline{\mathcal{U}}_i$ is the closure of the image of the map
\[ f_i : G \to G, \quad g \mapsto g \xi_i g^{-1} . \]
Since $G$ is affine, $f_i$ can equivalently be described by a morphism of rings
\[ f_i^\# : \mathcal{O}_G(G) \to \mathcal{O}_G(G) . \]
In particular, the closure $\overline{\mathcal{U}}_i$ corresponds to the ideal $I_i \subset \mathcal{O}_G(G)$ which is the kernel of $f_i^\#$. Generators for such these ideals can be computed using Gröbner basis \cite{AdamsLoustaunau1994}, and this gives us the desired equations. In particular, we use \cite[Theorem 2.4.2]{AdamsLoustaunau1994} in order to compute the kernel of $f_i^\#$.




\textbf{Orbits and stabilizers}.
For any $A \in \tilde{\TT}_n$, in order to compute the virtual class of the orbit of $A$, we can use Lemma \ref{lemma:motivic_orbit_stabilizer}. Indeed, as we have seen in Proposition \ref{prop:conjugacy_class_has_representative}, the stabilizer $\Stab(A)$ of any $A \in \tilde{\TT}_n$ is special.
In order to compute the virtual class of the stabilizer of $A$, we use Algorithm \ref{alg:virtual_classes}, since we have an explicit description
\[ \Stab(A) = \{ B \in \tilde{\TT}_n \mid AB - BA = 0 \} . \]
Regarding the families of conjugacy classes $\mathcal{C}_i$, we will make use of the following lemma.
\begin{lemma}
    \label{lemma:splitting_conjugacy_classes}
    For every $1 \le i \le N$, there is an isomorphism
    \[ \mathcal{C}_i \cong C_i \times \Orbit(A) \]
    for every $A \in C_i$.
\end{lemma}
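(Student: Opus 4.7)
Let $A = D + N$ with $D = \operatorname{diag}(A) \in D_\delta$ and $N = \xi_j - I$ strictly upper triangular. Since conjugation by $\tilde{\TT}_n$ preserves the diagonal, one has $\mathcal{C}_i = \bigsqcup_{A' \in C_i}\Orbit(A')$, and the projection $\rho\colon\mathcal{C}_i\to C_i$ sending $B$ to $\operatorname{diag}(B)+N$ realizes $\mathcal{C}_i$ as a family of $G$-orbits over $C_i$ with fiber $\Orbit(A')$ over $A'$. The plan is to show that this family is globally trivial, giving the isomorphism $\mathcal{C}_i\cong C_i\times\Orbit(A)$.

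First I would establish that $\xi_{\delta,i}\colon C_i\hookrightarrow\mathcal{C}_i$ (together with $\rho$) is a family of algebraic representatives. By adapting the argument of Proposition \ref{prop:conjugacy_class_has_representative} to general (not necessarily unipotent) conjugacy classes in $\tilde{\TT}_n$, the stabilizer $\Stab(A')$ of each $A'\in C_i$ is an iterated extension of copies of $\GG_m$ and $\GG_a$, and hence is special by Proposition \ref{prop:extension_special_groups}. Consequently the associated $\Stab(A')$-torsor over $\mathcal{C}_i$ is Zariski-locally trivial, yielding, Zariski-locally on $\mathcal{C}_i$, a morphism $\gamma\colon\mathcal{C}_i\to\tilde{\TT}_n$ satisfying $B=\gamma(B)\,\xi_{\delta,i}(\rho(B))\,\gamma(B)^{-1}$, and hence the local trivializations $B\mapsto(\rho(B),\gamma(B)A\gamma(B)^{-1})$ of $\rho$.

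The main step is to produce a global trivialization, for which I would analyze $\Orbit(A')$ uniformly across $A'\in C_i$. Solving the commutation and conjugation equations $gA'g^{-1}=B$ in coordinates, the entries of any $B\in\Orbit(A')$ are rational functions in the coordinates of $g\,\Stab(A')$ and the diagonal $D'=\operatorname{diag}(A')$, with denominators built from the nonzero differences $D'_{ii}-D'_{jj}$ (for $(i,j)$ in distinct blocks of $\delta$), which are regular on $D_\delta$. Assembling these into a single morphism $C_i\times\Orbit(A)\to\mathcal{C}_i$ and checking bijectivity using the section $\xi_{\delta,i}$ yields the desired algebraic isomorphism. The principal difficulty will be combinatorial bookkeeping: one must organize the rational dependence of orbit coordinates on $A'$ consistently across all of $C_i$ and verify that the resulting map is a global algebraic isomorphism --- transparent in low-rank examples, but intricate to write down uniformly for general $(\delta, j)$.
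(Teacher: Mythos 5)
There is a genuine gap at precisely the step you yourself flag as the ``principal difficulty'': producing the \emph{global} trivialization. Your first step (specialness of the stabilizers, hence Zariski-local triviality of the associated torsor and of $\rho : \mathcal{C}_i \to C_i$) is fine, but it only yields local trivializations, which is strictly weaker than the asserted product decomposition $\mathcal{C}_i \cong C_i \times \Orbit(A)$. The proposed assembly of coordinate-wise rational formulas into a single morphism $C_i \times \Orbit(A) \to \mathcal{C}_i$ is never actually carried out, and it cannot be carried out as described without first fixing a uniform identification of the fibers $\Orbit(A') \cong G/\Stab(A')$ with the one model fiber $\Orbit(A) \cong G/\Stab(A)$; that requires comparing the stabilizers of \emph{different} points of $C_i$ inside $G$, which your argument nowhere addresses.

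The paper's proof supplies exactly this missing ingredient: it shows that $\Stab(A)$ is literally the \emph{same} subgroup $H \subset G$ for every $A \in C_i$ --- not merely special, and not merely abstractly isomorphic from fiber to fiber. Writing out the linear equations characterizing $B \in \Stab(A)$, one proves by induction on $j - i$ that they force $B_{ij} = 0$ whenever $A_{ii} \ne A_{jj}$; since the off-diagonal entries of every $A \in C_i$ lie in $\{0, 1\}$ (they come from the fixed unipotent representative $\xi_i$), the surviving equations are then independent of $A$. Once $\Stab(A) = H$ is constant, the map $C_i \times G/H \to \mathcal{C}_i$, $(A', gH) \mapsto g A' g^{-1}$, is well defined and is the desired isomorphism, with no local-to-global patching needed. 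If you want to rescue your approach, this constancy of the stabilizer is the statement you must prove; the specialness argument is not a substitute for it.
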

\begin{proof}
    Note that the fibers of $\mathcal{C}_i \to C_i$ are given by $\Orbit(A) \cong G / \Stab(A)$, so it suffices to show that $\Stab(A)$ is constant for all $A \in C_i$. As there are only a finite number of cases to consider, this can easily be verified explicitly.
    Alternatively, note that $B \in \Stab(A)$ if and only if for all $1 \le i \le j \le n$,
    \[ B_{ij} (A_{ii} - A_{jj}) + \sum_{k = i + 1}^{j} B_{kj} A_{ik} - \sum_{k = i}^{j - 1} B_{ik} A_{kj} = 0 . \tag{$*$} \]
    We claim that $B_{ij} = 0$ for all $i \le j$ such that $A_{ii} \ne A_{jj}$. The result follows from this claim, as $A_{ij} \in \{ 0, 1 \}$ for $i \ne j$, so the solutions to $(*)$ will be independent of $A$. We proof the claim by induction on $j - i$, the case $j - i = 0$ being trivial. For the general case, take $i \le j$ such that $A_{ii} \ne A_{jj}$. if there exists a $k \in \{ i + 1, \ldots, j \}$ such that $A_{ki} \ne 0$, then $A_{kk} = A_{ii} \ne A_{jj}$, so $B_{kj} = 0$. Similarly, if there exists a $k \in \{ i, \ldots, j - 1 \}$ such that $A_{kj} \ne 0$, then $A_{kk} = A_{jj} \ne A_{ii}$ so $B_{ki} = 0$. Therefore, $(*)$ reduces to $B_{ij} = 0$.
\end{proof}

\subsection{Transition matrix}
\label{subsec:tqft_transition_matrix}

Let $X$ be a variety with stratification $X_i$, and let $Y$ be a variety over $X$. The goal of this subsection is to show that, in order to compute the classes $[Y \cap X_i]_X$, it is sufficient to compute the classes $[Y \cap \overline{X}_i]_X$ instead, making use of an inclusion--exclusion principle. 
%

\begin{example}
    Suppose $X$ is stratified by $X_0 \subset X$ closed and its open complement $X_1 = X \setminus X_0$. If we want to compute $[Y \cap X_0]$ and $[Y \cap X_1]$, then computing the latter would likely result in the computation $[Y \cap X] - [Y \cap X_0]$, which shows that the result of the computation of $[Y \cap X_0]$ can be reused. Therefore, instead of computing $[Y \cap X_0]$ and $[Y \cap X_1]$, one can compute $[Y \cap \overline{X}_0] = [Y \cap X_0]$ and $[Y \cap \overline{X}_1] = [Y \cap X] = [Y]$, from which formally follows that $[Y \cap X_1] = [Y \cap \overline{X}_1] - [Y \cap \overline{X}_0]$.
\end{example}


\begin{lemma}
    Let $X$ be a topological space with stratification $\{ X_i \}_{i \in I}$. Then $\overline{X}_i = \overline{X_j}$ if and only if $i = j$.
\end{lemma}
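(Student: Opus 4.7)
The plan is to handle the ``if'' direction trivially (since $i = j$ clearly gives $\overline{X_i} = \overline{X_j}$) and focus on the ``only if'' direction: assuming $\overline{X_i} = \overline{X_j}$, I want to force $X_i \cap X_j \neq \varnothing$, which by the disjointness clause in the definition of a stratification will immediately yield $i = j$. To make the intersection argument work I need both strata non-empty, so I will first dispose of the empty case: if $X_i = \varnothing$ then $\overline{X_i} = \varnothing$, so $\overline{X_j} = \varnothing$ forces $X_j = \varnothing$, and one interprets the indexing so that there is at most one empty stratum (or equivalently, one restricts the statement to non-empty strata). This side case is essentially bookkeeping and not the substantive content.

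With $X_i, X_j$ both non-empty, the key input is the standard topological fact that any locally closed subset is open in its own closure. Writing $X_i = U_i \cap C_i$ with $U_i$ open and $C_i$ closed in $X$, one checks $\overline{X_i} \subset C_i$, whence $X_i = U_i \cap \overline{X_i}$ is open in $\overline{X_i}$. Combining this with the hypothesis $\overline{X_i} = \overline{X_j}$, the subset $X_i$ is a non-empty open subset of $\overline{X_j}$. On the other hand, $X_j$ is by definition of closure dense in $\overline{X_j}$, so it meets every non-empty open subset of $\overline{X_j}$, giving $X_i \cap X_j \neq \varnothing$. The stratification axiom then forces $i = j$, closing the argument.

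The only subtle step is the local-closed-is-open-in-its-closure lemma, but this is a routine one-line topological check. The main obstacle is really conceptual rather than technical, namely the convention handling the empty case, since the statement as written fails if one allows multiple empty strata in the index set $I$; I would handle this either by adding a remark that empty strata are ignored or by reading the conclusion as ``$i = j$ or $X_i = X_j = \varnothing$.''
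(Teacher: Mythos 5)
Your proof is correct and follows essentially the same route as the paper: both arguments use that a locally closed stratum is open (and dense) in its own closure, so two strata with a common closure must intersect, contradicting disjointness. Your additional remark about the degenerate case of multiple empty strata is a fair point that the paper's proof silently ignores, but it does not change the substance of the argument.
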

\begin{proof}
    For each $i \in I$, write $X_i = Z_i \cap U_i$ for some closed $Z_i \subset X$ and open $U_i \subset X$. Without loss of generality, we may assume $Z_i = \overline{X}_i$. Now, if $\overline{X}_i = \overline{X}_j$ for some $i, j \in I$, then both $X_i$ and $X_j$ are open and dense in $\overline{X}_i = \overline{X}_j$, so they must intersect. But this contradicts the assumption that $X_i$ and $X_j$ are disjoint, since they are part of the stratification.
\end{proof}

\begin{definition}
    Let $X$ be a variety over $S$ with a stratification $\{ X_i \}_{i \in I}$. Put a partial ordering on $I$ where $i \le j$ if and only if $\overline{X}_i \subset \overline{X}_j$. Reflexivity and transitivity are clear, and anti-symmetry follows from the above lemma. Hence, the equalities
    \[ [X_i]_S = [\overline{X}_i]_S - \sum_{\substack{j < i}} [X_j]_S \]
    define a $\ZZ$-linear map $C : \ZZ^I \to \ZZ^I$, called the \emph{transition matrix} of the stratification, satisfying
    \[ [X_i]_S = \sum_{j \in I} C_{ij} [\overline{X}_j]_S \]
    in $\K(\Var/S)$.
\end{definition}

\begin{corollary}
    \label{cor:transition_matrix}
    Let $X$ be a variety over $S$ with finite stratification $\{ X_i \}_{i \in I}$ and corresponding transition matrix $C$. Then for any variety $Y$ over $X$, we have
    \[ [Y \cap X_i]_S = \sum_{j \in I} C_{ij} \ [Y \cap \overline{X}_j]_S , \]
    where it is understood that $Y \cap X_i = Y \times_X X_i$. \qed
\end{corollary}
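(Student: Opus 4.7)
The plan is to upgrade the defining recursion for the transition matrix from $\K(\Var/S)$ to $\K(\Var/X)$, then multiply by $[Y]_X$ and push forward to $\K(\Var/S)$.

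The key observation is that the data defining $C$ is intrinsic to the stratification of $X$: the partial order on $I$, and more importantly the set-theoretic decomposition $\overline{X}_i = X_i \sqcup \bigsqcup_{j<i} X_j$ which underlies the recursion $[X_i] = [\overline{X}_i] - \sum_{j<i}[X_j]$, make no reference to the base. Consequently, the same inductive argument used in the definition (invoking Lemma \ref{lemma:stratifications_are_finite} with base $X$ rather than $S$) establishes
\[ [X_i]_X = \sum_{j \in I} C_{ij} [\overline{X}_j]_X \]
as an identity in $\K(\Var/X)$ with the same coefficients $C_{ij}$.

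Now I would exploit the $\K(\Var_k)$-algebra structure on $\K(\Var/X)$, in which multiplication is fiber product over $X$. Multiplying the above identity by $[Y]_X$ gives $[Y]_X \cdot [X_i]_X = [Y \times_X X_i]_X = [Y \cap X_i]_X$, and analogously $[Y]_X \cdot [\overline{X}_j]_X = [Y \cap \overline{X}_j]_X$, yielding
\[ [Y \cap X_i]_X = \sum_{j \in I} C_{ij} [Y \cap \overline{X}_j]_X \]
in $\K(\Var/X)$. Applying the $\K(\Var_k)$-linear pushforward $g_! : \K(\Var/X) \to \K(\Var/S)$ induced by the structure morphism $g : X \to S$ — which simply forgets that a variety lives over $X$ and remembers only that it lives over $S$ — transports this identity to $\K(\Var/S)$, giving precisely the claimed equality.

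The only step requiring any care is the first one, promoting the scissor relation from $\K(\Var/S)$ to $\K(\Var/X)$; the rest is formal. Since that relation ultimately comes from a disjoint decomposition of subsets of $X$ and Lemma \ref{lemma:stratifications_are_finite} works over any base, this is not a real obstacle, and the corollary follows with essentially no new computation.
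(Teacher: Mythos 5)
Your argument is correct and is essentially a formalization of the argument the paper leaves implicit (the corollary is stated without proof): the scissor relations defining $C$ come from decompositions of subvarieties of $X$ itself, so they hold in $\K(\Var/X)$, can be multiplied there by $[Y]_X$, and then pushed forward along $X \to S$. The only caveat --- already built into the paper's definition of $C$ rather than introduced by your proof --- is that the recursion presupposes the set-theoretic decomposition $\overline{X}_i \setminus X_i = \bigsqcup_{j < i} X_j$, which is exactly the base-independence you invoke.
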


\begin{example}
    \label{ex:transition_matrix_unipotent}
    Let $G = \tilde{\TT}_n$ and let $U \subset G$ be the closed subvariety of unipotent matrices. Then $U$ has a stratification given by the $\mathcal{U}_1, \ldots, \mathcal{U}_M$. Since the equations corresponding to every $\overline{\mathcal{U}}_i$ can be computed, the partial ordering on $\{ 1, 2, \ldots, M \}$ can easily be decided, and hence the transition matrix $C_{ij}$ of this stratification can be computed. Now, for any variety $Y$ over $U$, we have by the above corollary that
    \[ [Y \cap \mathcal{U}_i]_G = \sum_{j = 1}^{M} C_{ij} \ [Y \cap \overline{\mathcal{U}}_j]_G . \]
    In particular, it suffices to compute all $[Y \cap \overline{\mathcal{U}}_i]_G$ in order to compute the $[Y \cap \mathcal{U}_i]_G$.
\end{example}

\subsection[Computing Z]{Computing $Z(\bdgenus)$}
\label{subsec:tqft_computing_Z}

Let us return to the problem of computing the matrix associated to $Z(\bdgenus)$ with respect to the generators given by the unipotent conjugacy classes. 

In order to compute the first column of this matrix, we can write
\begin{align*}
    c_! Z(\bdgenus)(\textbf{1}_\id) |_{\mathcal{U}_i}
        &= [\{ (A, B) \in G^2 \mid [A, B] \in \mathcal{U}_i \}] \\
        &= \sum_{j = 1}^{N} [\{ (A, B) \in G \times \mathcal{C}_j \mid [A, B] \in \mathcal{U}_i \}] \\
        &= \sum_{j = 1}^{N} [\{ (A, t) \in G \times C_j \mid [A, \xi_j(t)] \in \mathcal{U}_i \}] \times [\Orbit(\xi_j(\star))] \\
        &= \sum_{j = 1}^{N} E_{ij} \, [ \Orbit(\xi_j(\star)) ],
\end{align*}
where $E_{ij} = [\{ (A, t) \in G \times C_j \mid [A, \xi_j(t)] \in \mathcal{U}_i \}]$ and $\star$ is any point in $C_j$. For the third equality, we used Proposition \ref{prop:fiber_product_over_family_representatives} with $Y = S = \mathcal{C}_j$ and $X = \{ (A, B) \in G \times \mathcal{C}_j \mid [A, B] \in \mathcal{U}_i \}$, in combination with Lemma \ref{lemma:splitting_conjugacy_classes}.

Since we have computed equations describing the closures $\overline{\mathcal{U}}_i$, it is in fact easier to compute the classes $\overline{E}_{ij} = [\{ (A, t) \in G \times C_j : [A, \xi_j(t)] \in \overline{\mathcal{U}}_i \}]$ rather than the $E_{ij}$. By Corollary \ref{cor:transition_matrix}, they are related by the transition matrix of the stratification
\[ E_{ij} = \sum_{k = 1}^{M} C_{ik} \overline{E}_{kj} . \]
The coefficients $\overline{E}_{ij}$ can be computed using Algorithm \ref{alg:virtual_classes}.
Then, using Corollary \ref{cor:fibration_over_representative}, we obtain
\[ Z(\bdgenus)(\textbf{1}_\id) = \sum_{i, k = 1}^{M} \sum_{j = 1}^{N} C_{ik} \overline{E}_{kj} [\Orbit(\xi_j(\star))] / [\mathcal{U}_i] \cdot \textbf{1}_i . \]



Next, we can use the first column we have just computed to compute all other columns of the matrix associated to $Z(\bdgenus)$.
In particular, we can write
\begin{align*}
    c_! Z(\bdgenus)(\textbf{1}_j)|_{\mathcal{U}_i}
        &= [\{ (g, A, B) \in \mathcal{U}_j \times G^2 \mid g[A, B] \in \mathcal{U}_i \}] \\
        &= \sum_{k = 1}^{M} [\{ (g, A, B) \in \mathcal{U}_j \times G^2 \mid g [A, B] \in \mathcal{U}_i, [A, B] \in \mathcal{U}_k \}] \\
        &= \sum_{k = 1}^{M} [\{ g \in \mathcal{U}_j \mid g \xi_k \in \mathcal{U}_i \}] [\{ (A, B) \in G^2 \mid [A, B] \in \mathcal{U}_k \}] \\
        &= \sum_{k = 1}^{M} F_{ijk} \cdot c_! Z(\bdgenus)(\textbf{1}_\id)|_{\mathcal{U}_k} ,
\end{align*}
where $F_{ijk} = [\{ g \in \mathcal{U}_j \mid g \xi_k \in \mathcal{U}_i \}]$. The third equality follows from Corollary \ref{cor:fiber_product_over_representative} applied to $S = \mathcal{U}_k$ and $X = \{ (g, h) \in \mathcal{U}_j \times \mathcal{U}_k \mid g h \in \mathcal{U}_i \}$ and $Y = \{ (A, B) \in G^2 \mid [A, B] \in \mathcal{U}_k \}$.

As was the case for the coefficients $E_{ij}$, it is easier to compute $\overline{F}_{ijk} = [\{ g \in \overline{\mathcal{U}}_{j} \mid g \xi_k \in \overline{\mathcal{U}}_i \}]$ rather than $F_{ijk}$, and they are related by the transition matrix of the stratification
\[ F_{ijk} = \sum_{m, \ell = 1}^{M} C_{i m} C_{j \ell} \overline{F}_{m \ell k} . \]
The coefficients $\overline{F}_{ijk}$ can be computed using Algorithm \ref{alg:virtual_classes}. Finally, using Corollary \ref{cor:fibration_over_representative} we obtain
\[ Z(\bdgenus)(\textbf{1}_j) = \sum_{i, k, \ell, m = 1}^{M} C_{i m} C_{j \ell} \overline{F}_{m \ell k} c_! Z(\bdgenus)(\textbf{1}_\id)|_{\mathcal{U}_k} / [\mathcal{U}_i] \cdot \textbf{1}_i . \]

\begin{remark}
    If one would naively try to compute the coefficients of the matrix representing $Z(\bdgenus)$, they would need to compute the virtual class of $M^2$ varieties, each of which being a subvariety of $G^3$, with equations being mostly quadratic due to the commutator $[A, B]$. With this new setup, one needs to compute the virtual class of $MN + M^3$ varieties to obtain the coefficients $\overline{E}_{ij}$ and $\overline{F}_{ijk}$. However, the main advantage of this approach is that these varieties will now be subvarieties of $G \times C_j$ and $\mathcal{U}_j$, respectively, with equations being mostly linear! In practice, the simplification of these systems of equations far outweighs the number of such systems. It is due to the (families of) algebraic representatives of the conjugacy classes $\mathcal{C}_i$, or in other words, the lack of monodromy of $\mathcal{C}_i \to C_i$, that these simplifications can be made.
\end{remark}

\begin{remark}
    Let us make a few computational remarks. First, to speed up the computation of the coefficients $\overline{E}_{ij}$ and $\overline{F}_{ijk}$, which are done by Algorithm \ref{alg:virtual_classes}, note that these can be performed in parallel as they are independent.
    Second, there are a few checks one can perform to detect obvious errors. In particular, one can assert that the following equalities hold:
    \begin{itemize}
        \item $\sum_{i = 1}^{M} c_! Z(\bdgenus)(\textbf{1}_j)|_{\mathcal{U}_i} = [G]^2 [\mathcal{U}_j]$ for all $j$,
        \item $\sum_{i = 1}^{M} F_{ijk} = [\mathcal{U}_j]$ for all $j, k$,
        \item $\sum_{j = 1}^{M} F_{ijk} = [\mathcal{U}_i]$ for all $i, k$,
        \item $\sum_{i = 1}^{M} E_{ij} = [G] [C_j]$ for all $j$.
    \end{itemize}
\end{remark}
 
\subsection{Results}
\label{subsec:tqft_results}

The code performing the computations as described in this section can be found at \cite{GitHubMathCode}.
The results for $1 \le n \le 4$ were already given in \cite{HablicsekVogel2020}.

The matrix associated to $Z(\bdgenus)$, with respect to the generators given by the unipotent conjugacy classes, can be diagonalized in order to apply Theorem \ref{thm:tqft_method}. For $G = \tilde{\TT}_5$, the eigenvalues with multiplicity of this matrix are given by
\[ \arraycolsep=1cm\def\arraystretch{1.25} \begin{array}{lll}
    q^{12} (q - 1)^{4}, &
    q^{14} (q - 1)^{2} \quad \textup{(mult. 2)}, &
    q^{16} \quad \textup{(mult. 2)}, \\
    q^{14} (q - 1)^{4} \quad \textup{(mult. 3)}, &
    q^{16} (q - 1)^{2} \quad \textup{(mult. 7)}, &
    q^{18} \quad \textup{(mult. 2)}, \\
    q^{14} (q - 1)^{6}, &
    q^{16} (q - 1)^{4} \quad \textup{(mult. 7)}, &
    q^{18} (q - 1)^{2} \quad \textup{(mult. 7)}, \\
    q^{20}, &
    q^{16} (q - 1)^{6} \quad \textup{(mult. 2)}, &
    q^{18} (q - 1)^{4} \quad \textup{(mult. 8)}, \\
    q^{20} (q - 1)^{2} \quad \textup{(mult. 4)}, &
    q^{18} (q - 1)^{6} \quad \textup{(mult. 3)}, &
    q^{20} (q - 1)^{4} \quad \textup{(mult. 6)}, \\
    q^{20} (q - 1)^{6} \quad \textup{(mult. 4)}, &
    q^{20} (q - 1)^{8} , &
\end{array} \]
with respective eigenvectors
\[ \scalebox{0.30}{$\footnotesize \left[\begin{array}{ccccccccccccccccccccccccccc}q \left(q - 1\right)^{2} & - q^{2} \left(q - 1\right)^{3} & q \left(q - 1\right)^{3} & q \left(q - 1\right)^{4} & q \left(q - 1\right)^{4} & 0 & q^{2} \left(q - 1\right)^{2} & - q \left(q - 1\right)^{2} & 0 & - q \left(q - 1\right)^{3} & 0 & - \left(q - 1\right)^{3} & 0 & 0 & - q^{2} \left(q - 1\right)^{3} & \left(q - 1\right)^{4} & - \left(q - 1\right)^{4} & - q^{2} \left(q - 1\right) & 0 & q \left(q - 1\right)^{2} & 0 & \left(q - 1\right)^{2} & 0 & 0 & q^{2} \left(q - 1\right)^{2} & - \left(q - 1\right)^{3} & 0\\0 & 0 & 0 & 0 & 0 & 0 & 0 & 0 & - q \left(q - 1\right)^{2} & - q \left(q - 2\right) \left(q - 1\right)^{2} & \left(q - 1\right)^{2} & 0 & 0 & 0 & 0 & - \left(q - 1\right)^{3} & 0 & 0 & q \left(q - 1\right) & q \left(q - 2\right) \left(q - 1\right) & \left(q - 1\right)^{2} & 0 & 0 & 0 & 0 & \left(q - 1\right)^{2} & 0\\0 & 0 & 0 & - q \left(q - 1\right)^{3} & 0 & 0 & 0 & 0 & q^{2} \left(q - 1\right)^{2} & q \left(q - 1\right)^{4} & - q \left(q - 1\right)^{2} & \left(q - 1\right)^{2} & 0 & 0 & 0 & \left(q - 1\right)^{4} & \left(q - 1\right)^{3} & 0 & 0 & q \left(q - 1\right)^{2} & q \left(q - 1\right) & 1 - q & 0 & 0 & 0 & - \left(q - 1\right)^{3} & 0\\0 & 0 & - q \left(q - 1\right)^{2} & q \left(q - 1\right)^{4} & - q \left(q - 1\right)^{3} & 0 & 0 & q \left(q - 1\right) & 0 & - q \left(q - 1\right)^{3} & 0 & - \left(q - 1\right)^{3} & 0 & 0 & q^{2} \left(q - 1\right)^{2} & \left(q - 1\right)^{4} & - \left(q - 1\right)^{4} & 0 & 0 & q \left(q - 1\right)^{2} & 0 & \left(q - 1\right)^{2} & 0 & 0 & - q^{2} \left(q - 1\right) & - \left(q - 1\right)^{3} & 0\\- q \left(q - 1\right) & q^{2} \left(q - 1\right)^{2} & q \left(q - 1\right)^{3} & q \left(q - 1\right)^{4} & q \left(q - 1\right)^{4} & 0 & - q^{2} \left(q - 1\right) & - q \left(q - 1\right)^{2} & 0 & - q \left(q - 1\right)^{3} & 0 & - \left(q - 1\right)^{3} & 0 & 0 & - q^{2} \left(q - 1\right)^{3} & \left(q - 1\right)^{4} & - \left(q - 1\right)^{4} & q^{2} & 0 & q \left(q - 1\right)^{2} & 0 & \left(q - 1\right)^{2} & 0 & 0 & q^{2} \left(q - 1\right)^{2} & - \left(q - 1\right)^{3} & 0\\0 & q \left(q - 1\right)^{2} & 0 & 0 & - \left(q - 1\right)^{3} & q - 1 & q \left(q - 2\right) \left(q - 1\right) & 0 & 0 & 0 & 0 & 0 & \left(q - 1\right)^{2} & \left(q - 1\right)^{3} & \left(q - 1\right)^{2} & 0 & 0 & - q \left(q - 1\right) & 0 & 0 & 0 & 0 & q - 1 & 1 - q & \left(q - 1\right)^{3} & \left(q - 1\right)^{2} & - \left(q - 1\right)^{2}\\0 & 0 & 0 & 0 & 0 & 0 & 0 & 0 & 0 & 0 & 0 & 0 & 0 & 0 & 0 & 0 & 0 & 0 & 0 & 0 & 0 & 0 & 0 & 0 & 0 & 1 - q & q - 1\\0 & 0 & 0 & 0 & \left(q - 1\right)^{2} & 0 & 0 & 0 & 0 & 0 & 0 & 0 & 1 - q & - \left(q - 1\right)^{2} & 1 - q & 0 & 0 & 0 & 0 & 0 & 0 & 0 & -1 & 1 & - \left(q - 1\right)^{2} & \left(q - 1\right)^{2} & - \left(q - 1\right)^{2}\\0 & - q \left(q - 1\right) & 0 & 0 & - \left(q - 1\right)^{3} & -1 & - q \left(q - 2\right) & 0 & 0 & 0 & 0 & 0 & \left(q - 1\right)^{2} & \left(q - 1\right)^{3} & \left(q - 1\right)^{2} & 0 & 0 & q & 0 & 0 & 0 & 0 & q - 1 & 1 - q & \left(q - 1\right)^{3} & \left(q - 1\right)^{2} & - \left(q - 1\right)^{2}\\1 - q & - q \left(q - 1\right)^{3} & 0 & 0 & 0 & 0 & q \left(q - 1\right)^{2} & 0 & 0 & 0 & 0 & \left(q - 1\right)^{2} & 0 & 0 & 0 & - \left(q - 1\right)^{3} & \left(q - 1\right)^{3} & - q \left(q - 1\right) & 0 & 0 & 0 & \left(q - 1\right)^{2} & 0 & 0 & 0 & - \left(q - 1\right)^{3} & q \left(q - 1\right)^{2}\\0 & 0 & 0 & 0 & 0 & 0 & 0 & 0 & 0 & 0 & 0 & 0 & 0 & 0 & 0 & \left(q - 1\right)^{2} & 0 & 0 & 0 & 0 & 0 & 0 & 0 & 0 & 0 & \left(q - 1\right)^{2} & - q \left(q - 1\right)\\0 & 0 & 0 & 0 & 0 & 0 & 0 & 0 & 0 & 0 & 0 & 1 - q & 0 & 0 & 0 & - \left(q - 1\right)^{3} & - \left(q - 1\right)^{2} & 0 & 0 & 0 & 0 & 1 - q & 0 & 0 & 0 & - \left(q - 1\right)^{3} & q \left(q - 1\right)^{2}\\1 & q \left(q - 1\right)^{2} & 0 & 0 & 0 & 0 & - q \left(q - 1\right) & 0 & 0 & 0 & 0 & \left(q - 1\right)^{2} & 0 & 0 & 0 & - \left(q - 1\right)^{3} & \left(q - 1\right)^{3} & q & 0 & 0 & 0 & \left(q - 1\right)^{2} & 0 & 0 & 0 & - \left(q - 1\right)^{3} & q \left(q - 1\right)^{2}\\0 & 0 & - q \left(q - 1\right)^{2} & - q \left(q - 1\right)^{3} & q \left(q - 1\right)^{4} & 0 & 0 & q \left(q - 1\right) & 0 & q \left(q - 1\right)^{2} & 0 & - \left(q - 1\right)^{3} & 0 & 0 & - q^{2} \left(q - 1\right)^{3} & \left(q - 1\right)^{4} & - \left(q - 1\right)^{4} & 0 & 0 & - q \left(q - 1\right) & 0 & \left(q - 1\right)^{2} & 0 & 0 & q^{2} \left(q - 1\right)^{2} & - \left(q - 1\right)^{3} & 0\\0 & 0 & 0 & 0 & 0 & 0 & 0 & 0 & q \left(q - 1\right) & q \left(q - 2\right) \left(q - 1\right) & 1 - q & 0 & 0 & 0 & 0 & - \left(q - 1\right)^{3} & 0 & 0 & - q & - q \left(q - 2\right) & 1 - q & 0 & 0 & 0 & 0 & \left(q - 1\right)^{2} & 0\\0 & 0 & 0 & q \left(q - 1\right)^{2} & 0 & 0 & 0 & 0 & - q^{2} \left(q - 1\right) & - q \left(q - 1\right)^{3} & q \left(q - 1\right) & \left(q - 1\right)^{2} & 0 & 0 & 0 & \left(q - 1\right)^{4} & \left(q - 1\right)^{3} & 0 & 0 & - q \left(q - 1\right) & - q & 1 - q & 0 & 0 & 0 & - \left(q - 1\right)^{3} & 0\\0 & 0 & q \left(q - 1\right) & - q \left(q - 1\right)^{3} & - q \left(q - 1\right)^{3} & 0 & 0 & - q & 0 & q \left(q - 1\right)^{2} & 0 & - \left(q - 1\right)^{3} & 0 & 0 & q^{2} \left(q - 1\right)^{2} & \left(q - 1\right)^{4} & - \left(q - 1\right)^{4} & 0 & 0 & - q \left(q - 1\right) & 0 & \left(q - 1\right)^{2} & 0 & 0 & - q^{2} \left(q - 1\right) & - \left(q - 1\right)^{3} & 0\\0 & q \left(q - 1\right)^{2} & 0 & - \left(q - 1\right)^{3} & 0 & 1 - q & 0 & 0 & - \left(q - 1\right)^{3} & - q \left(q - 2\right) \left(q - 1\right)^{2} & \left(q - 1\right)^{2} & 0 & 0 & 0 & 0 & 0 & 0 & - q \left(q - 1\right) & 1 - q & 0 & 1 - q & 0 & 0 & 0 & 0 & 0 & 0\\0 & 0 & 0 & \left(q - 1\right)^{2} & 0 & 0 & 0 & 0 & q \left(q - 1\right) & \left(q - 1\right) \left(q^{2} - q - 1\right) & 1 - q & 0 & 0 & 0 & 0 & 0 & \left(q - 1\right)^{2} & 0 & 0 & 1 - q & 1 - q & 0 & 0 & 0 & 0 & 0 & - \left(q - 1\right)^{2}\\0 & 0 & 0 & \left(q - 1\right)^{2} & 0 & 0 & 0 & 0 & 1 - q & 0 & 0 & 0 & 0 & 0 & 0 & 0 & 0 & 0 & 1 - q & 0 & 0 & 0 & 0 & 0 & 0 & 0 & 0\\0 & 0 & 0 & 1 - q & 0 & 0 & 0 & 0 & 0 & 1 - q & 0 & 0 & 0 & 0 & 0 & 0 & 1 - q & 0 & 0 & 1 - q & 0 & 0 & 0 & 0 & 0 & 0 & q - 1\\0 & - q \left(q - 1\right) & 0 & - \left(q - 1\right)^{3} & 0 & 1 & 0 & 0 & - \left(q - 1\right)^{3} & - q \left(q - 2\right) \left(q - 1\right)^{2} & \left(q - 1\right)^{2} & 0 & 0 & 0 & 0 & 0 & 0 & q & 1 - q & 0 & 1 - q & 0 & 0 & 0 & 0 & 0 & 0\\0 & 1 - q & 0 & 0 & 0 & 0 & 1 - q & 0 & 0 & 0 & 0 & 0 & 0 & 0 & 0 & 0 & 0 & 1 - q & 0 & 0 & 0 & 0 & 0 & 0 & 0 & 0 & 0\\0 & 0 & 0 & 0 & 0 & 0 & 0 & 0 & 0 & 0 & 0 & 0 & 0 & 0 & 0 & 0 & 0 & 0 & 0 & 0 & 0 & 0 & 0 & 0 & 0 & 0 & 0\\0 & 1 & 0 & 0 & 0 & 0 & 1 & 0 & 0 & 0 & 0 & 0 & 0 & 0 & 0 & 0 & 0 & 1 & 0 & 0 & 0 & 0 & 0 & 0 & 0 & 0 & 0\\0 & 0 & 0 & \left(q - 1\right)^{2} & 0 & 0 & 0 & 0 & \left(q - 1\right)^{2} & q \left(q - 2\right) \left(q - 1\right) & 1 - q & 0 & 0 & 0 & 0 & 0 & 0 & 0 & 1 & 0 & 1 & 0 & 0 & 0 & 0 & 0 & 0\\0 & 0 & 0 & 1 - q & 0 & 0 & 0 & 0 & - q & - q^{2} + q + 1 & 1 & 0 & 0 & 0 & 0 & 0 & \left(q - 1\right)^{2} & 0 & 0 & 1 & 1 & 0 & 0 & 0 & 0 & 0 & - \left(q - 1\right)^{2}\\0 & 0 & 0 & 1 - q & 0 & 0 & 0 & 0 & 1 & 0 & 0 & 0 & 0 & 0 & 0 & 0 & 0 & 0 & 1 & 0 & 0 & 0 & 0 & 0 & 0 & 0 & 0\\0 & 0 & 0 & 1 & 0 & 0 & 0 & 0 & 0 & 1 & 0 & 0 & 0 & 0 & 0 & 0 & 1 - q & 0 & 0 & 1 & 0 & 0 & 0 & 0 & 0 & 0 & q - 1\\0 & 0 & 0 & 0 & - q \left(q - 1\right)^{3} & 0 & 0 & 0 & 0 & 0 & 0 & \left(q - 1\right)^{2} & 0 & - q^{2} \left(q - 1\right)^{2} & - q^{2} \left(q - 2\right) \left(q - 1\right)^{2} & - \left(q - 1\right)^{3} & - \left(q - 1\right)^{4} & 0 & 0 & 0 & 0 & 1 - q & 0 & q^{2} \left(q - 1\right) & q^{2} \left(q - 2\right) \left(q - 1\right) & \left(q - 1\right)^{2} & - q \left(q - 1\right)^{2}\\0 & 0 & 0 & - q \left(q - 1\right)^{2} & 0 & 0 & 0 & 0 & 0 & 0 & 1 - q & 0 & 0 & 0 & 0 & \left(q - 1\right)^{2} & 0 & 0 & 0 & 0 & 1 - q & 0 & 0 & 0 & 0 & 1 - q & q \left(q - 1\right)\\0 & 0 & 0 & 0 & 0 & 0 & 0 & 0 & 0 & 0 & 0 & 1 - q & 0 & 0 & 0 & - \left(q - 1\right)^{3} & \left(q - 1\right)^{3} & 0 & 0 & 0 & 0 & 1 & 0 & 0 & 0 & \left(q - 1\right)^{2} & - q \left(q - 1\right)^{2}\\0 & 0 & 0 & q \left(q - 1\right) & 0 & 0 & 0 & 0 & 0 & 0 & 1 & 0 & 0 & 0 & 0 & \left(q - 1\right)^{2} & 0 & 0 & 0 & 0 & 1 & 0 & 0 & 0 & 0 & 1 - q & q \left(q - 1\right)\\0 & 0 & 0 & 0 & q \left(q - 1\right)^{2} & 0 & 0 & 0 & 0 & 0 & 0 & \left(q - 1\right)^{2} & 0 & q^{2} \left(q - 1\right) & q^{2} \left(q - 2\right) \left(q - 1\right) & - \left(q - 1\right)^{3} & - \left(q - 1\right)^{4} & 0 & 0 & 0 & 0 & 1 - q & 0 & - q^{2} & - q^{2} \left(q - 2\right) & \left(q - 1\right)^{2} & - q \left(q - 1\right)^{2}\\0 & 0 & 0 & 0 & \left(q - 1\right)^{2} & 0 & 0 & 0 & 0 & 0 & 0 & 0 & 1 - q & q - 1 & \left(q - 1\right) \left(q^{2} - q - 1\right) & 0 & 0 & 0 & 0 & 0 & 0 & 0 & q - 1 & \left(q - 1\right)^{2} & \left(q - 1\right) \left(q^{2} - 3 q + 1\right) & 1 - q & q - 1\\0 & 0 & 0 & 0 & 0 & 0 & 0 & 0 & 0 & 0 & 0 & 0 & 0 & 0 & 0 & 0 & 0 & 0 & 0 & 0 & 0 & 0 & 0 & 0 & 0 & 1 & -1\\0 & 0 & 0 & 0 & 1 - q & 0 & 0 & 0 & 0 & 0 & 0 & 0 & 1 & -1 & - q^{2} + q + 1 & 0 & 0 & 0 & 0 & 0 & 0 & 0 & -1 & 1 - q & - q^{2} + 3 q - 1 & 1 - q & q - 1\\0 & 0 & 0 & 0 & 0 & 0 & 0 & 0 & 0 & 0 & 0 & 1 - q & 0 & 0 & 0 & \left(q - 1\right)^{2} & \left(q - 1\right)^{3} & 0 & 0 & 0 & 0 & 1 - q & 0 & 0 & 0 & \left(q - 1\right)^{2} & 0\\0 & 0 & 0 & 0 & 0 & 0 & 0 & 0 & 0 & 0 & 0 & 0 & 0 & 0 & 0 & 1 - q & 0 & 0 & 0 & 0 & 0 & 0 & 0 & 0 & 0 & 1 - q & 0\\0 & 0 & 0 & 0 & 0 & 0 & 0 & 0 & 0 & 0 & 0 & 1 & 0 & 0 & 0 & \left(q - 1\right)^{2} & - \left(q - 1\right)^{2} & 0 & 0 & 0 & 0 & 1 & 0 & 0 & 0 & \left(q - 1\right)^{2} & 0\\0 & 0 & 0 & 0 & 0 & 0 & 0 & 0 & 0 & 0 & 0 & 0 & 0 & q \left(q - 1\right)^{2} & q \left(q - 1\right)^{3} & 0 & \left(q - 1\right)^{3} & 0 & 0 & 0 & 0 & 0 & - q \left(q - 1\right) & 0 & 0 & 0 & q \left(q - 1\right)^{2}\\0 & 0 & 1 - q & 0 & 0 & 0 & 0 & 1 - q & 0 & 0 & 0 & 0 & 0 & 0 & 0 & 0 & 0 & 0 & 0 & 0 & 0 & 0 & 0 & 0 & 0 & 0 & 1 - q\\0 & 0 & 0 & 0 & - q \left(q - 1\right)^{2} & 0 & 0 & 0 & 0 & 0 & 0 & 0 & - q \left(q - 1\right) & 0 & 0 & 0 & - \left(q - 1\right)^{2} & 0 & 0 & 0 & 0 & 0 & 0 & - q \left(q - 1\right) & q \left(q - 1\right) & 0 & 0\\0 & 0 & 0 & 0 & 0 & 0 & 0 & 0 & 0 & 0 & 0 & 0 & 0 & - q \left(q - 1\right) & - q \left(q - 1\right)^{2} & 0 & \left(q - 1\right)^{3} & 0 & 0 & 0 & 0 & 0 & q & 0 & 0 & 0 & q \left(q - 1\right)^{2}\\0 & 0 & 1 & 0 & 0 & 0 & 0 & 1 & 0 & 0 & 0 & 0 & 0 & 0 & 0 & 0 & 0 & 0 & 0 & 0 & 0 & 0 & 0 & 0 & 0 & 0 & 1 - q\\0 & 0 & 0 & 0 & q \left(q - 1\right) & 0 & 0 & 0 & 0 & 0 & 0 & 0 & q & 0 & 0 & 0 & - \left(q - 1\right)^{2} & 0 & 0 & 0 & 0 & 0 & 0 & q & - q & 0 & 0\\0 & 0 & 0 & 0 & \left(q - 1\right)^{2} & 0 & 0 & 0 & 0 & 0 & 0 & 0 & 0 & - q \left(q - 1\right) & 0 & - \left(q - 1\right)^{2} & 0 & 0 & 0 & 0 & 0 & 0 & 0 & - q \left(q - 1\right) & 0 & 0 & 0\\0 & 0 & 0 & 0 & 0 & 0 & 0 & 0 & 0 & 0 & 0 & 0 & 0 & 0 & 0 & q - 1 & 0 & 0 & 0 & 0 & 0 & 0 & 0 & 0 & 0 & 0 & 0\\0 & 0 & 0 & 0 & 1 - q & 0 & 0 & 0 & 0 & 0 & 0 & 0 & 0 & q & 0 & - \left(q - 1\right)^{2} & 0 & 0 & 0 & 0 & 0 & 0 & 0 & q & 0 & 0 & 0\\0 & 0 & 0 & 0 & 0 & 0 & 0 & 0 & 0 & 0 & 0 & 0 & 0 & 0 & 0 & 0 & - \left(q - 1\right)^{2} & 0 & 0 & 0 & 0 & 0 & 0 & 0 & 0 & 0 & - q \left(q - 1\right)\\0 & 0 & 0 & 0 & 0 & 0 & 0 & 0 & 0 & 0 & 0 & 0 & 0 & 0 & 0 & 0 & 0 & 0 & 0 & 0 & 0 & 0 & 0 & 0 & 0 & 0 & 1\\0 & 0 & 0 & 0 & 0 & 0 & 0 & 0 & 0 & 0 & 0 & 0 & 0 & 0 & 0 & 0 & q - 1 & 0 & 0 & 0 & 0 & 0 & 0 & 0 & 0 & 0 & 0\\0 & 0 & 0 & 0 & 1 - q & 0 & 0 & 0 & 0 & 0 & 0 & 0 & 0 & 0 & - q \left(q - 1\right) & q - 1 & 0 & 0 & 0 & 0 & 0 & 0 & 0 & 0 & - q \left(q - 1\right) & 0 & 0\\0 & 0 & 0 & 0 & 0 & 0 & 0 & 0 & 0 & 0 & 0 & 0 & 0 & 0 & 0 & -1 & 0 & 0 & 0 & 0 & 0 & 0 & 0 & 0 & 0 & 0 & 0\\0 & 0 & 0 & 0 & 1 & 0 & 0 & 0 & 0 & 0 & 0 & 0 & 0 & 0 & q & q - 1 & 0 & 0 & 0 & 0 & 0 & 0 & 0 & 0 & q & 0 & 0\\0 & 0 & 0 & 0 & 0 & 0 & 0 & 0 & 0 & 0 & 0 & 0 & 0 & 0 & 0 & 0 & 0 & 0 & 0 & 0 & 0 & 0 & 0 & 0 & 0 & 0 & 0\\0 & 0 & 0 & 0 & 0 & 0 & 0 & 0 & 0 & 0 & 0 & 0 & 0 & 0 & 0 & 0 & 1 - q & 0 & 0 & 0 & 0 & 0 & 0 & 0 & 0 & 0 & 0\\0 & 0 & 0 & 0 & 0 & 0 & 0 & 0 & 0 & 0 & 0 & 0 & 0 & 0 & 0 & 0 & 0 & 0 & 0 & 0 & 0 & 0 & 0 & 0 & 0 & 0 & 0\\0 & 0 & 0 & 0 & 0 & 0 & 0 & 0 & 0 & 0 & 0 & 0 & 0 & 0 & 0 & 0 & 1 & 0 & 0 & 0 & 0 & 0 & 0 & 0 & 0 & 0 & 0\\0 & 0 & 0 & 0 & 0 & 0 & 0 & 0 & 0 & 0 & 0 & 0 & 0 & 0 & 0 & 0 & 0 & 0 & 0 & 0 & 0 & 0 & 0 & 0 & 0 & 0 & 0\\0 & 0 & 0 & 0 & 0 & 0 & 0 & 0 & 0 & 0 & 0 & 0 & 0 & 0 & 0 & 0 & 0 & 0 & 0 & 0 & 0 & 0 & 0 & 0 & 0 & 0 & 0\end{array}\right.$} \]
\[ \scalebox{0.30}{$\footnotesize
\left.\begin{array}{cccccccccccccccccccccccccccccccccc}0 & q \left(q - 1\right)^{3} & 0 & 0 & q \left(q - 1\right)^{3} & \left(q - 1\right)^{4} & - q \left(q - 1\right) & - q \left(q - 1\right) & 0 & - \left(q - 1\right)^{2} & 0 & 0 & 0 & - q \left(q - 1\right)^{2} & 0 & - q \left(q - 1\right)^{2} & 0 & 0 & 0 & - \left(q - 1\right)^{3} & 1 - q & 0 & 0 & 0 & 0 & 0 & 0 & 0 & \left(q - 1\right)^{2} & 0 & 0 & 0 & 1 - q & 1\\0 & \left(q - 1\right)^{2} \left(q^{2} - 3 q + 1\right) & \left(q - 1\right)^{2} & - \left(q - 1\right)^{2} & - \left(q - 1\right)^{2} & - \left(q - 1\right)^{3} & - q \left(q - 1\right) & 0 & 1 - q & 0 & 0 & 0 & 0 & - q \left(q - 1\right)^{2} & 0 & 0 & 0 & 0 & - \left(q - 1\right)^{2} & \left(2 - q\right) \left(q - 1\right)^{2} & 1 - q & 0 & q - 1 & 0 & 0 & 0 & 0 & q - 1 & \left(q - 2\right) \left(q - 1\right) & 0 & 0 & -1 & 2 - q & 1\\0 & q \left(q - 1\right)^{3} & 0 & q \left(q - 1\right)^{2} & 0 & \left(q - 1\right)^{4} & - q \left(q - 1\right) & 0 & 0 & - \left(q - 1\right)^{2} & 0 & 0 & 0 & - q \left(q - 1\right)^{2} & - q \left(q - 1\right) & 0 & 0 & 0 & 0 & - \left(q - 1\right)^{3} & 1 - q & 0 & q & 0 & 0 & 0 & 0 & 0 & \left(q - 1\right)^{2} & 0 & 0 & 0 & 1 - q & 1\\0 & q \left(q - 1\right)^{3} & 0 & 0 & q \left(q - 1\right)^{3} & \left(q - 1\right)^{4} & - q \left(q - 1\right) & q & 0 & - \left(q - 1\right)^{2} & 0 & 0 & 0 & - q \left(q - 1\right)^{2} & 0 & - q \left(q - 1\right)^{2} & 0 & 0 & 0 & - \left(q - 1\right)^{3} & 1 - q & 0 & 0 & 0 & 0 & 0 & 0 & 0 & \left(q - 1\right)^{2} & 0 & 0 & 0 & 1 - q & 1\\0 & q \left(q - 1\right)^{3} & 0 & 0 & q \left(q - 1\right)^{3} & \left(q - 1\right)^{4} & - q \left(q - 1\right) & - q \left(q - 1\right) & 0 & - \left(q - 1\right)^{2} & 0 & 0 & 0 & - q \left(q - 1\right)^{2} & 0 & - q \left(q - 1\right)^{2} & 0 & 0 & 0 & - \left(q - 1\right)^{3} & 1 - q & 0 & 0 & 0 & 0 & 0 & 0 & 0 & \left(q - 1\right)^{2} & 0 & 0 & 0 & 1 - q & 1\\0 & - \left(q - 1\right)^{2} & 0 & - \left(q - 1\right)^{2} & - \left(q - 1\right)^{2} & - \left(q - 1\right)^{3} & 0 & 1 - q & q - 1 & \left(2 - q\right) \left(q - 1\right) & q - 1 & 0 & \left(q - 1\right)^{2} & q - 1 & \left(2 - q\right) \left(q - 1\right) & \left(2 - q\right) \left(q - 1\right) & 0 & - \left(q - 1\right)^{2} & 0 & \left(2 - q\right) \left(q - 1\right)^{2} & 1 - q & q - 1 & 0 & 0 & 0 & 0 & q - 1 & 0 & \left(q - 2\right) \left(q - 1\right) & 0 & -1 & 0 & 2 - q & 1\\0 & q - 1 & 0 & q - 1 & q - 1 & \left(q - 1\right)^{2} & 0 & 0 & 0 & q - 1 & 0 & q - 1 & 0 & q - 1 & q - 1 & q - 1 & 0 & q - 1 & q - 1 & \left(q - 1\right) \left(2 q - 3\right) & 1 - q & q - 1 & 0 & 0 & 0 & 1 & q - 2 & q - 2 & \left(q - 2\right)^{2} & 0 & -1 & -1 & 3 - q & 1\\0 & - \left(q - 1\right)^{2} & 0 & - \left(q - 1\right)^{2} & - \left(q - 1\right)^{2} & - \left(q - 1\right)^{3} & 0 & 1 & q - 1 & \left(2 - q\right) \left(q - 1\right) & q - 1 & 0 & \left(q - 1\right)^{2} & q - 1 & \left(2 - q\right) \left(q - 1\right) & \left(2 - q\right) \left(q - 1\right) & 0 & - \left(q - 1\right)^{2} & 0 & \left(2 - q\right) \left(q - 1\right)^{2} & 1 - q & q - 1 & 0 & 0 & 0 & 0 & q - 1 & 0 & \left(q - 2\right) \left(q - 1\right) & 0 & -1 & 0 & 2 - q & 1\\0 & - \left(q - 1\right)^{2} & 0 & - \left(q - 1\right)^{2} & - \left(q - 1\right)^{2} & - \left(q - 1\right)^{3} & 0 & 1 - q & q - 1 & \left(2 - q\right) \left(q - 1\right) & q - 1 & 0 & \left(q - 1\right)^{2} & q - 1 & \left(2 - q\right) \left(q - 1\right) & \left(2 - q\right) \left(q - 1\right) & 0 & - \left(q - 1\right)^{2} & 0 & \left(2 - q\right) \left(q - 1\right)^{2} & 1 - q & q - 1 & 0 & 0 & 0 & 0 & q - 1 & 0 & \left(q - 2\right) \left(q - 1\right) & 0 & -1 & 0 & 2 - q & 1\\0 & 0 & 0 & 0 & 0 & \left(q - 1\right)^{4} & 0 & 0 & 0 & - \left(q - 1\right)^{2} & 0 & - q \left(q - 1\right) & 0 & 0 & 0 & 0 & 0 & 0 & 0 & - \left(q - 1\right)^{3} & 1 - q & q & - q & 0 & 0 & 0 & 0 & 0 & \left(q - 1\right)^{2} & 0 & 0 & 0 & 1 - q & 1\\0 & - \left(q - 1\right)^{3} & 1 - q & - \left(q - 1\right)^{2} & - \left(q - 1\right)^{2} & - \left(q - 1\right)^{3} & 0 & 0 & 1 - q & 0 & - q & - q \left(q - 2\right) & - q \left(q - 1\right) & 0 & 0 & 0 & 0 & 0 & - \left(q - 1\right)^{2} & \left(2 - q\right) \left(q - 1\right)^{2} & 1 - q & q & -1 & 0 & 0 & 0 & 0 & q - 1 & \left(q - 2\right) \left(q - 1\right) & 0 & 0 & -1 & 2 - q & 1\\0 & 0 & 0 & q \left(q - 1\right)^{2} & q \left(q - 1\right)^{2} & \left(q - 1\right)^{4} & 0 & 0 & 0 & - \left(q - 1\right)^{2} & 0 & - q \left(q - 1\right) & 0 & 0 & - q \left(q - 1\right) & - q \left(q - 1\right) & 0 & 0 & 0 & - \left(q - 1\right)^{3} & 1 - q & q & 0 & 0 & 0 & 0 & 0 & 0 & \left(q - 1\right)^{2} & 0 & 0 & 0 & 1 - q & 1\\0 & 0 & 0 & 0 & 0 & \left(q - 1\right)^{4} & 0 & 0 & 0 & - \left(q - 1\right)^{2} & 0 & - q \left(q - 1\right) & 0 & 0 & 0 & 0 & 0 & 0 & 0 & - \left(q - 1\right)^{3} & 1 - q & q & - q & 0 & 0 & 0 & 0 & 0 & \left(q - 1\right)^{2} & 0 & 0 & 0 & 1 - q & 1\\0 & q \left(q - 1\right)^{3} & 0 & 0 & q \left(q - 1\right)^{3} & \left(q - 1\right)^{4} & q & - q \left(q - 1\right) & 0 & - \left(q - 1\right)^{2} & 0 & 0 & 0 & - q \left(q - 1\right)^{2} & 0 & - q \left(q - 1\right)^{2} & 0 & 0 & 0 & - \left(q - 1\right)^{3} & 1 - q & 0 & 0 & 0 & 0 & 0 & 0 & 0 & \left(q - 1\right)^{2} & 0 & 0 & 0 & 1 - q & 1\\0 & \left(q - 1\right)^{2} \left(q^{2} - 3 q + 1\right) & \left(q - 1\right)^{2} & - \left(q - 1\right)^{2} & - \left(q - 1\right)^{2} & - \left(q - 1\right)^{3} & q & 0 & 1 - q & 0 & 0 & 0 & 0 & - q \left(q - 1\right)^{2} & 0 & 0 & 0 & 0 & - \left(q - 1\right)^{2} & \left(2 - q\right) \left(q - 1\right)^{2} & 1 - q & 0 & q - 1 & 0 & 0 & 0 & 0 & q - 1 & \left(q - 2\right) \left(q - 1\right) & 0 & 0 & -1 & 2 - q & 1\\0 & q \left(q - 1\right)^{3} & 0 & q \left(q - 1\right)^{2} & 0 & \left(q - 1\right)^{4} & q & 0 & 0 & - \left(q - 1\right)^{2} & 0 & 0 & 0 & - q \left(q - 1\right)^{2} & - q \left(q - 1\right) & 0 & 0 & 0 & 0 & - \left(q - 1\right)^{3} & 1 - q & 0 & q & 0 & 0 & 0 & 0 & 0 & \left(q - 1\right)^{2} & 0 & 0 & 0 & 1 - q & 1\\0 & q \left(q - 1\right)^{3} & 0 & 0 & q \left(q - 1\right)^{3} & \left(q - 1\right)^{4} & q & q & 0 & - \left(q - 1\right)^{2} & 0 & 0 & 0 & - q \left(q - 1\right)^{2} & 0 & - q \left(q - 1\right)^{2} & 0 & 0 & 0 & - \left(q - 1\right)^{3} & 1 - q & 0 & 0 & 0 & 0 & 0 & 0 & 0 & \left(q - 1\right)^{2} & 0 & 0 & 0 & 1 - q & 1\\0 & 0 & 0 & \left(q - 1\right)^{2} & 0 & - \left(q - 1\right)^{3} & 1 - q & 0 & 0 & 0 & 1 - q & 0 & q - 1 & 0 & \left(q - 2\right) \left(q - 1\right) & \left(q - 1\right)^{2} & \left(q - 1\right)^{2} & 0 & \left(q - 1\right)^{2} & \left(q - 1\right)^{2} & 0 & 0 & 1 - q & q - 1 & 0 & 0 & 0 & 0 & \left(q - 2\right) \left(q - 1\right) & -1 & 0 & 1 & 1 - q & 1\\1 - q & 0 & 0 & - \left(q - 1\right)^{2} & \left(q - 2\right) \left(q - 1\right)^{2} & \left(q - 1\right)^{2} & 1 - q & 0 & 0 & 0 & 0 & 0 & 0 & 0 & 0 & - \left(q - 1\right)^{2} & 1 - q & 0 & 0 & \left(q - 2\right) \left(q - 1\right) & 0 & 0 & 0 & q - 2 & 1 & 0 & 0 & q - 2 & \left(q - 2\right)^{2} & -1 & 0 & 0 & 2 - q & 1\\0 & 0 & 0 & 1 - q & 0 & - \left(q - 1\right)^{3} & 1 - q & 0 & 0 & 0 & 1 & 0 & -1 & 0 & 2 - q & 1 - q & \left(q - 1\right)^{2} & 0 & \left(q - 1\right)^{2} & \left(q - 1\right)^{2} & 0 & 0 & 1 & q - 1 & 0 & 0 & 0 & 0 & \left(q - 2\right) \left(q - 1\right) & -1 & 0 & 1 & 1 - q & 1\\1 & 0 & 0 & q - 1 & \left(2 - q\right) \left(q - 1\right) & \left(q - 1\right)^{2} & 1 - q & 0 & 0 & 0 & 0 & 0 & 0 & 0 & 0 & q - 1 & 1 - q & 0 & 0 & \left(q - 2\right) \left(q - 1\right) & 0 & 0 & 0 & q - 2 & 1 & 0 & 0 & q - 2 & \left(q - 2\right)^{2} & -1 & 0 & 0 & 2 - q & 1\\0 & 0 & 0 & \left(q - 1\right)^{2} & 0 & - \left(q - 1\right)^{3} & 1 - q & 0 & 0 & 0 & 1 - q & 0 & q - 1 & 0 & \left(q - 2\right) \left(q - 1\right) & \left(q - 1\right)^{2} & \left(q - 1\right)^{2} & 0 & \left(q - 1\right)^{2} & \left(q - 1\right)^{2} & 0 & 0 & 1 - q & q - 1 & 0 & 0 & 0 & 0 & \left(q - 2\right) \left(q - 1\right) & -1 & 0 & 1 & 1 - q & 1\\0 & 0 & 0 & 0 & 0 & \left(q - 1\right)^{2} & 0 & 0 & 0 & 0 & 0 & 0 & 0 & 0 & 0 & 0 & 1 - q & q - 1 & 1 - q & \left(q - 2\right) \left(q - 1\right) & 0 & 0 & 0 & 0 & -1 & -1 & 0 & 2 - q & 1 - q & -1 & -1 & 1 & 2 - q & 1\\0 & 0 & 0 & 0 & 0 & 1 - q & 0 & 0 & 0 & 0 & 0 & 0 & 0 & 0 & 0 & 0 & 1 & -1 & 0 & 3 - 2 q & 0 & 0 & 0 & -1 & 0 & 0 & -1 & -1 & 4 - 2 q & -1 & -1 & 0 & 3 - q & 1\\0 & 0 & 0 & 0 & 0 & \left(q - 1\right)^{2} & 0 & 0 & 0 & 0 & 0 & 0 & 0 & 0 & 0 & 0 & 1 - q & q - 1 & 1 - q & \left(q - 2\right) \left(q - 1\right) & 0 & 0 & 0 & 0 & -1 & -1 & 0 & 2 - q & 1 - q & -1 & -1 & 1 & 2 - q & 1\\0 & 0 & 0 & \left(q - 1\right)^{2} & 0 & - \left(q - 1\right)^{3} & 1 & 0 & 0 & 0 & 1 - q & 0 & q - 1 & 0 & \left(q - 2\right) \left(q - 1\right) & \left(q - 1\right)^{2} & \left(q - 1\right)^{2} & 0 & \left(q - 1\right)^{2} & \left(q - 1\right)^{2} & 0 & 0 & 1 - q & q - 1 & 0 & 0 & 0 & 0 & \left(q - 2\right) \left(q - 1\right) & -1 & 0 & 1 & 1 - q & 1\\1 - q & 0 & 0 & - \left(q - 1\right)^{2} & \left(q - 2\right) \left(q - 1\right)^{2} & \left(q - 1\right)^{2} & 1 & 0 & 0 & 0 & 0 & 0 & 0 & 0 & 0 & - \left(q - 1\right)^{2} & 1 - q & 0 & 0 & \left(q - 2\right) \left(q - 1\right) & 0 & 0 & 0 & q - 2 & 1 & 0 & 0 & q - 2 & \left(q - 2\right)^{2} & -1 & 0 & 0 & 2 - q & 1\\0 & 0 & 0 & 1 - q & 0 & - \left(q - 1\right)^{3} & 1 & 0 & 0 & 0 & 1 & 0 & -1 & 0 & 2 - q & 1 - q & \left(q - 1\right)^{2} & 0 & \left(q - 1\right)^{2} & \left(q - 1\right)^{2} & 0 & 0 & 1 & q - 1 & 0 & 0 & 0 & 0 & \left(q - 2\right) \left(q - 1\right) & -1 & 0 & 1 & 1 - q & 1\\1 & 0 & 0 & q - 1 & \left(2 - q\right) \left(q - 1\right) & \left(q - 1\right)^{2} & 1 & 0 & 0 & 0 & 0 & 0 & 0 & 0 & 0 & q - 1 & 1 - q & 0 & 0 & \left(q - 2\right) \left(q - 1\right) & 0 & 0 & 0 & q - 2 & 1 & 0 & 0 & q - 2 & \left(q - 2\right)^{2} & -1 & 0 & 0 & 2 - q & 1\\0 & - q \left(q - 1\right)^{2} & 0 & - q \left(q - 1\right)^{2} & q \left(q - 2\right) \left(q - 1\right)^{2} & \left(q - 1\right)^{4} & 0 & - q \left(q - 1\right) & 0 & q - 1 & 0 & q \left(q - 1\right) & 0 & q \left(q - 1\right) & q \left(q - 1\right) & - q \left(q - 2\right) \left(q - 1\right) & 0 & 0 & 0 & - \left(q - 1\right)^{3} & 1 & - q & 0 & 0 & 0 & 0 & 0 & 0 & \left(q - 1\right)^{2} & 0 & 0 & 0 & 1 - q & 1\\0 & \left(1 - q\right) \left(q^{2} - 3 q + 1\right) & 1 - q & q - 1 & q - 1 & - \left(q - 1\right)^{3} & 0 & 0 & 1 & 0 & q & q \left(q - 2\right) & q \left(q - 1\right) & q \left(q - 1\right) & 0 & 0 & 0 & 0 & - \left(q - 1\right)^{2} & \left(2 - q\right) \left(q - 1\right)^{2} & 1 & - q & q - 1 & 0 & 0 & 0 & 0 & q - 1 & \left(q - 2\right) \left(q - 1\right) & 0 & 0 & -1 & 2 - q & 1\\0 & - q \left(q - 1\right)^{2} & 0 & 0 & - q \left(q - 1\right)^{2} & \left(q - 1\right)^{4} & 0 & 0 & 0 & q - 1 & 0 & q \left(q - 1\right) & 0 & q \left(q - 1\right) & 0 & q \left(q - 1\right) & 0 & 0 & 0 & - \left(q - 1\right)^{3} & 1 & - q & q & 0 & 0 & 0 & 0 & 0 & \left(q - 1\right)^{2} & 0 & 0 & 0 & 1 - q & 1\\0 & \left(1 - q\right) \left(q^{2} - 3 q + 1\right) & 1 - q & q - 1 & q - 1 & - \left(q - 1\right)^{3} & 0 & 0 & 1 & 0 & q & q \left(q - 2\right) & q \left(q - 1\right) & q \left(q - 1\right) & 0 & 0 & 0 & 0 & - \left(q - 1\right)^{2} & \left(2 - q\right) \left(q - 1\right)^{2} & 1 & - q & q - 1 & 0 & 0 & 0 & 0 & q - 1 & \left(q - 2\right) \left(q - 1\right) & 0 & 0 & -1 & 2 - q & 1\\0 & - q \left(q - 1\right)^{2} & 0 & - q \left(q - 1\right)^{2} & q \left(q - 2\right) \left(q - 1\right)^{2} & \left(q - 1\right)^{4} & 0 & q & 0 & q - 1 & 0 & q \left(q - 1\right) & 0 & q \left(q - 1\right) & q \left(q - 1\right) & - q \left(q - 2\right) \left(q - 1\right) & 0 & 0 & 0 & - \left(q - 1\right)^{3} & 1 & - q & 0 & 0 & 0 & 0 & 0 & 0 & \left(q - 1\right)^{2} & 0 & 0 & 0 & 1 - q & 1\\0 & q - 1 & 0 & q - 1 & q - 1 & - \left(q - 1\right)^{3} & 0 & 1 - q & -1 & q - 2 & -1 & 0 & 1 - q & -1 & q - 2 & q - 2 & 0 & - \left(q - 1\right)^{2} & 0 & \left(2 - q\right) \left(q - 1\right)^{2} & 1 & -1 & 0 & 0 & 0 & 0 & q - 1 & 0 & \left(q - 2\right) \left(q - 1\right) & 0 & -1 & 0 & 2 - q & 1\\0 & -1 & 0 & -1 & -1 & \left(q - 1\right)^{2} & 0 & 0 & 0 & -1 & 0 & -1 & 0 & -1 & -1 & -1 & 0 & q - 1 & q - 1 & \left(q - 1\right) \left(2 q - 3\right) & 1 & -1 & 0 & 0 & 0 & 1 & q - 2 & q - 2 & \left(q - 2\right)^{2} & 0 & -1 & -1 & 3 - q & 1\\0 & q - 1 & 0 & q - 1 & q - 1 & - \left(q - 1\right)^{3} & 0 & 1 & -1 & q - 2 & -1 & 0 & 1 - q & -1 & q - 2 & q - 2 & 0 & - \left(q - 1\right)^{2} & 0 & \left(2 - q\right) \left(q - 1\right)^{2} & 1 & -1 & 0 & 0 & 0 & 0 & q - 1 & 0 & \left(q - 2\right) \left(q - 1\right) & 0 & -1 & 0 & 2 - q & 1\\0 & 0 & 0 & - q \left(q - 1\right)^{2} & - q \left(q - 1\right)^{2} & \left(q - 1\right)^{4} & 0 & 0 & 0 & q - 1 & 0 & 0 & 0 & 0 & q \left(q - 1\right) & q \left(q - 1\right) & 0 & 0 & 0 & - \left(q - 1\right)^{3} & 1 & 0 & - q & 0 & 0 & 0 & 0 & 0 & \left(q - 1\right)^{2} & 0 & 0 & 0 & 1 - q & 1\\0 & \left(q - 1\right)^{2} & 1 & q - 1 & q - 1 & - \left(q - 1\right)^{3} & 0 & 0 & 1 & 0 & 0 & 0 & 0 & 0 & 0 & 0 & 0 & 0 & - \left(q - 1\right)^{2} & \left(2 - q\right) \left(q - 1\right)^{2} & 1 & 0 & -1 & 0 & 0 & 0 & 0 & q - 1 & \left(q - 2\right) \left(q - 1\right) & 0 & 0 & -1 & 2 - q & 1\\0 & 0 & 0 & 0 & 0 & \left(q - 1\right)^{4} & 0 & 0 & 0 & q - 1 & 0 & 0 & 0 & 0 & 0 & 0 & 0 & 0 & 0 & - \left(q - 1\right)^{3} & 1 & 0 & 0 & 0 & 0 & 0 & 0 & 0 & \left(q - 1\right)^{2} & 0 & 0 & 0 & 1 - q & 1\\\left(q - 1\right)^{2} & 0 & 0 & q \left(q - 1\right)^{2} & 0 & - \left(q - 1\right)^{3} & 0 & - q \left(q - 1\right) & 0 & 0 & 0 & 0 & - q \left(q - 1\right) & 0 & 0 & - q \left(q - 1\right)^{2} & - \left(q - 1\right)^{2} & \left(q - 1\right)^{2} & 0 & \left(q - 1\right)^{3} & 0 & 1 - q & 0 & 1 - q & 0 & 0 & 1 - q & 1 - q & \left(2 - q\right) \left(q - 1\right) & 1 & 1 & 0 & -1 & 1\\0 & 0 & 0 & 0 & 0 & \left(q - 1\right)^{2} & 0 & 0 & 0 & 0 & 0 & 1 - q & 0 & 0 & 0 & 0 & q - 1 & 1 - q & q - 1 & 0 & 0 & 1 - q & q - 1 & 2 - q & -1 & -1 & 2 - q & 2 - q & - q^{2} + 3 q - 3 & 1 & 1 & -1 & 0 & 1\\1 - q & 0 & 0 & - q \left(q - 1\right) & 0 & - \left(q - 1\right)^{3} & 0 & 0 & 0 & 0 & - q & 0 & - q \left(q - 2\right) & 0 & 0 & q \left(q - 1\right) & - \left(q - 1\right)^{2} & \left(q - 1\right)^{2} & 0 & \left(q - 1\right)^{3} & 0 & 1 - q & q & 1 - q & 0 & 0 & 1 - q & 1 - q & \left(2 - q\right) \left(q - 1\right) & 1 & 1 & 0 & -1 & 1\\\left(q - 1\right)^{2} & 0 & 0 & q \left(q - 1\right)^{2} & 0 & - \left(q - 1\right)^{3} & 0 & q & 0 & 0 & 0 & 0 & - q \left(q - 1\right) & 0 & 0 & - q \left(q - 1\right)^{2} & - \left(q - 1\right)^{2} & \left(q - 1\right)^{2} & 0 & \left(q - 1\right)^{3} & 0 & 1 - q & 0 & 1 - q & 0 & 0 & 1 - q & 1 - q & \left(2 - q\right) \left(q - 1\right) & 1 & 1 & 0 & -1 & 1\\0 & 0 & 0 & 0 & 0 & \left(q - 1\right)^{2} & 0 & 0 & 0 & 0 & 0 & 1 - q & 0 & 0 & 0 & 0 & q - 1 & 1 - q & q - 1 & 0 & 0 & 1 - q & q - 1 & 2 - q & -1 & -1 & 2 - q & 2 - q & - q^{2} + 3 q - 3 & 1 & 1 & -1 & 0 & 1\\1 - q & 0 & 0 & - q \left(q - 1\right) & 0 & - \left(q - 1\right)^{3} & 0 & 0 & 0 & 0 & - q & 0 & - q \left(q - 2\right) & 0 & 0 & q \left(q - 1\right) & - \left(q - 1\right)^{2} & \left(q - 1\right)^{2} & 0 & \left(q - 1\right)^{3} & 0 & 1 - q & q & 1 - q & 0 & 0 & 1 - q & 1 - q & \left(2 - q\right) \left(q - 1\right) & 1 & 1 & 0 & -1 & 1\\0 & 0 & 1 - q & 0 & 0 & \left(q - 1\right)^{2} & 0 & 1 - q & 0 & 0 & 0 & 0 & 0 & - \left(q - 1\right)^{2} & 0 & 0 & q - 1 & 0 & 0 & 0 & 0 & 0 & 0 & 0 & 1 & 0 & 0 & 0 & 0 & 1 & 0 & 0 & 0 & 1\\0 & - \left(q - 1\right)^{2} & 0 & 0 & 0 & 1 - q & 0 & 0 & 0 & 0 & 0 & 0 & 0 & - \left(q - 1\right)^{2} & 0 & 0 & -1 & 0 & -1 & 1 - q & 0 & 0 & 0 & 1 & 0 & 0 & 0 & 0 & 0 & 1 & 0 & -1 & 1 & 1\\0 & 0 & 1 - q & 0 & 0 & \left(q - 1\right)^{2} & 0 & 1 & 0 & 0 & 0 & 0 & 0 & - \left(q - 1\right)^{2} & 0 & 0 & q - 1 & 0 & 0 & 0 & 0 & 0 & 0 & 0 & 1 & 0 & 0 & 0 & 0 & 1 & 0 & 0 & 0 & 1\\1 - q & 0 & 0 & 0 & 0 & - \left(q - 1\right)^{3} & 0 & 0 & 0 & 0 & q & 0 & 0 & 0 & 0 & 0 & - \left(q - 1\right)^{2} & \left(q - 1\right)^{2} & 0 & \left(q - 1\right)^{3} & 0 & 1 & - q & 1 - q & 0 & 0 & 1 - q & 1 - q & \left(2 - q\right) \left(q - 1\right) & 1 & 1 & 0 & -1 & 1\\0 & 0 & 0 & 0 & 0 & \left(q - 1\right)^{2} & 0 & 0 & 0 & 0 & 0 & 1 & 0 & 0 & 0 & 0 & q - 1 & 1 - q & q - 1 & 0 & 0 & 1 & -1 & 2 - q & -1 & -1 & 2 - q & 2 - q & - q^{2} + 3 q - 3 & 1 & 1 & -1 & 0 & 1\\1 & 0 & 0 & 0 & 0 & - \left(q - 1\right)^{3} & 0 & 0 & 0 & 0 & 0 & 0 & q & 0 & 0 & 0 & - \left(q - 1\right)^{2} & \left(q - 1\right)^{2} & 0 & \left(q - 1\right)^{3} & 0 & 1 & 0 & 1 - q & 0 & 0 & 1 - q & 1 - q & \left(2 - q\right) \left(q - 1\right) & 1 & 1 & 0 & -1 & 1\\0 & 0 & 1 & 0 & 0 & \left(q - 1\right)^{2} & 0 & 1 - q & 0 & 0 & 0 & 0 & 0 & q - 1 & 0 & 0 & q - 1 & 0 & 0 & 0 & 0 & 0 & 0 & 0 & 1 & 0 & 0 & 0 & 0 & 1 & 0 & 0 & 0 & 1\\0 & q - 1 & 0 & 0 & 0 & 1 - q & 0 & 0 & 0 & 0 & 0 & 0 & 0 & q - 1 & 0 & 0 & -1 & 0 & -1 & 1 - q & 0 & 0 & 0 & 1 & 0 & 0 & 0 & 0 & 0 & 1 & 0 & -1 & 1 & 1\\0 & 0 & 1 & 0 & 0 & \left(q - 1\right)^{2} & 0 & 1 & 0 & 0 & 0 & 0 & 0 & q - 1 & 0 & 0 & q - 1 & 0 & 0 & 0 & 0 & 0 & 0 & 0 & 1 & 0 & 0 & 0 & 0 & 1 & 0 & 0 & 0 & 1\\0 & 0 & 0 & 1 - q & 0 & \left(q - 1\right)^{2} & 0 & 0 & 0 & 0 & 0 & 0 & 0 & 0 & 1 - q & 0 & 0 & 1 - q & 1 - q & - \left(q - 1\right)^{2} & 0 & 0 & 1 - q & 0 & 0 & 1 & 0 & 0 & 0 & 0 & 1 & 1 & -1 & 1\\0 & 0 & 0 & 0 & - \left(q - 1\right)^{2} & 1 - q & 0 & 0 & 0 & 0 & 0 & 0 & 0 & 0 & 0 & - \left(q - 1\right)^{2} & 0 & 1 & 0 & 0 & 0 & 0 & 0 & 0 & 0 & 0 & 1 & 0 & 0 & 0 & 1 & 0 & 0 & 1\\0 & 0 & 0 & 1 & 0 & \left(q - 1\right)^{2} & 0 & 0 & 0 & 0 & 0 & 0 & 0 & 0 & 1 & 0 & 0 & 1 - q & 1 - q & - \left(q - 1\right)^{2} & 0 & 0 & 1 & 0 & 0 & 1 & 0 & 0 & 0 & 0 & 1 & 1 & -1 & 1\\0 & 0 & 0 & 0 & q - 1 & 1 - q & 0 & 0 & 0 & 0 & 0 & 0 & 0 & 0 & 0 & q - 1 & 0 & 1 & 0 & 0 & 0 & 0 & 0 & 0 & 0 & 0 & 1 & 0 & 0 & 0 & 1 & 0 & 0 & 1\\0 & 0 & 0 & 0 & 0 & 1 - q & 0 & 0 & 0 & 0 & 0 & 0 & 0 & 0 & 0 & 0 & 0 & 0 & 1 & 0 & 0 & 0 & 0 & 0 & 0 & 0 & 0 & 1 & 0 & 0 & 0 & 1 & 0 & 1\\0 & 0 & 0 & 0 & 0 & 1 & 0 & 0 & 0 & 0 & 0 & 0 & 0 & 0 & 0 & 0 & 0 & 0 & 0 & 1 & 0 & 0 & 0 & 0 & 0 & 0 & 0 & 0 & 1 & 0 & 0 & 0 & 1 & 1\end{array}\right]$} \]

Theorem \ref{thm:tqft_method} and \eqref{eq:Tn_from_tilde_Tn} yield the following theorem.

\begin{theorem}
    \label{thm:virtual_class_T5_representation_variety}
    The virtual class of the $\TT_5$-representation variety of $\Sigma_g$ is given by
    \begin{align*}
        [R_{\TT_5}(\Sigma_g)]
            &= q^{12 g - 2} \left(q - 1\right)^{6 g + 2} + 2 q^{14 g - 4} \left(q - 1\right)^{4 g + 3} + 3 q^{14 g - 4} \left(q - 1\right)^{6 g + 2} + q^{14 g - 4} \left(q - 1\right)^{8 g + 1} \\
            &\quad + 2 q^{16 g - 6} \left(q - 1\right)^{2 g + 4} + 7 q^{16 g - 6} \left(q - 1\right)^{4 g + 3} + 7 q^{16 g - 6} \left(q - 1\right)^{6 g + 2} + 2 q^{16 g - 6} \left(q - 1\right)^{8 g + 1} \\
            &\quad + 2 q^{18 g - 8} \left(q - 1\right)^{2 g + 4} + 7 q^{18 g - 8} \left(q - 1\right)^{4 g + 3} + 8 q^{18 g - 8} \left(q - 1\right)^{6 g + 2} + 3 q^{18 g - 8} \left(q - 1\right)^{8 g + 1} \\
            &\quad + q^{20 g - 10} \left(q - 1\right)^{10 g} + q^{20 g - 10} \left(q - 1\right)^{2 g + 4} + 4 q^{20 g - 10} \left(q - 1\right)^{4 g + 3} + 6 q^{20 g - 10} \left(q - 1\right)^{6 g + 2} \\
            &\quad + 4 q^{20 g - 10} \left(q - 1\right)^{8 g + 1} . \tag*{\qed}
    \end{align*}
\end{theorem}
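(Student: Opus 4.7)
The plan is to apply Theorem \ref{thm:tqft_method} with $G = \tilde{\TT}_5$ and then use \eqref{eq:Tn_from_tilde_Tn} to convert the virtual class of $R_{\tilde{\TT}_5}(\Sigma_g)$ to that of $R_{\TT_5}(\Sigma_g)$ by multiplying by $(q-1)^{2g}$. The main content is therefore the computation of the operator $Z(\bdgenus)$ as an endomorphism of the submodule of $\K(\Var/\tilde{\TT}_5)$ generated by the $M = 61$ unipotent conjugacy classes $\mathcal{U}_1, \ldots, \mathcal{U}_{61}$, followed by its diagonalization and the contraction against $Z(\bdcupleft)$ and $Z(\bdcupright)$.

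First I would enumerate the unipotent conjugacy classes of $\tilde{\TT}_5$ using Belitskii's algorithm (or equivalently, brute-force partitioning of the $2^{10}$ upper-triangular matrices with entries in $\{0,1\}$), picking an algebraic representative $\xi_i$ for each $\mathcal{U}_i$ — justified by Proposition \ref{prop:conjugacy_class_has_representative}. In parallel, I would list the $N = 372$ families $\mathcal{C}_j$ of (not necessarily unipotent) conjugacy classes with their families of algebraic representatives. I would then compute a Gröbner basis for the ideal defining each closure $\overline{\mathcal{U}}_i$, and from the resulting containment relations read off the transition matrix $C_{ij}$ as in Example \ref{ex:transition_matrix_unipotent}.

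Next I would apply the reductions of Subsection \ref{subsec:tqft_computing_Z}: run Algorithm \ref{alg:virtual_classes} on the systems cutting out the varieties $\{ (A,t) \in G \times C_j \mid [A, \xi_j(t)] \in \overline{\mathcal{U}}_i \}$ and $\{ g \in \overline{\mathcal{U}}_j \mid g \xi_k \in \overline{\mathcal{U}}_i \}$ to obtain the coefficients $\overline{E}_{ij}$ and $\overline{F}_{ijk}$, then twist by $C$ and by the orbit-class ratios $[\Orbit(\xi_j(\star))]/[\mathcal{U}_i]$ using Lemma \ref{lemma:motivic_orbit_stabilizer} and Corollary \ref{cor:fibration_over_representative}. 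This produces the full $61 \times 61$ matrix representing $Z(\bdgenus)$. I would verify the four sanity checks listed at the end of Subsection \ref{subsec:tqft_computing_Z} to guard against arithmetic slips.

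Finally I would diagonalize the matrix as $P D P^{-1}$ — the eigenvalues and eigenvectors displayed above make this explicit — and compute
\[ [R_{\tilde{\TT}_5}(\Sigma_g)] = Z(\bdcupright) \, P D^g P^{-1} \, Z(\bdcupleft)(1), \]
so that each eigenvalue of the form $q^{a}(q-1)^{b}$ with multiplicity $m$ contributes $m \cdot \alpha \cdot q^{a g}(q-1)^{b g}$ for some coefficient $\alpha \in \K(\Var_k)$ determined by the corresponding entries of $P^{-1} Z(\bdcupleft)(1)$ and $Z(\bdcupright) P$. Multiplying by $(q-1)^{2g}$ via \eqref{eq:Tn_from_tilde_Tn} yields the stated polynomial. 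The principal obstacle is purely computational: keeping the symbolic manipulation of a $61 \times 61$ matrix over $\ZZ[q]$ tractable, which is precisely what the reformulation in terms of algebraic representatives (rather than direct computation of the $M^2$ commutator varieties) is designed to make feasible; the implementation at \cite{GitHubMathCode} carries this out.
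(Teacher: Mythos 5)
Your proposal is correct and follows exactly the route the paper takes: reduce to $\tilde{\TT}_5$ via \eqref{eq:Tn_from_tilde_Tn}, compute the $61\times 61$ matrix of $Z(\bdgenus)$ on the submodule spanned by the unipotent conjugacy classes using the representatives, transition matrix, and the coefficients $\overline{E}_{ij}$, $\overline{F}_{ijk}$ from Subsections \ref{subsec:tqft_conjugacy_classes}--\ref{subsec:tqft_computing_Z}, then diagonalize and contract with $Z(\bdcupleft)$ and $Z(\bdcupright)$ as in Theorem \ref{thm:tqft_method}. The paper's own proof is precisely this machine-assisted computation (with the displayed eigenvalues and eigenvectors), so there is nothing to add.
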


\begin{remark}
    For small $g$, we have
    \begin{align*}
        [R_{\TT_5}(\Sigma_{0})] &= 1 , \\
        [R_{\TT_5}(\Sigma_{1})] &= q^{10} \left(q - 1\right)^{6} \left(q^{4} + 6 q^{3} + q^{2} - 4 q + 1\right) , \\
        [R_{\TT_5}(\Sigma_{2})] &= q^{22} \left(q - 1\right)^{8} \left(q^{20} - 12 q^{19} + 66 q^{18} - 216 q^{17} + 459 q^{16} - 645 q^{15} + 567 q^{14} - 214 q^{13} - 181 q^{12} \right. \\ &\quad \left. + 403 q^{11} - 428 q^{10} + 345 q^{9} - 237 q^{8} + 143 q^{7} - 68 q^{6} + 17 q^{5} + 18 q^{4} - 23 q^{3} + 15 q^{2} - 6 q + 1\right) .
    \end{align*}
\end{remark}

Note that precisely the same method can be applied to the groups $G = \UU_n$ for $n = 1, 2, 3, 4, 5$. In fact, the coefficients $F_{ijk}$ can be reused.

\begin{theorem}
    \label{thm:virtual_class_Un_representation_varieties}
    The virtual classes of the $G$-representation variety of $\Sigma_g$ for $G = \UU_2, \UU_3, \UU_4$ and $\UU_5$ are given by
    \begin{align*}
        [R_{\UU_2}(\Sigma_g)] &= q^{2g} , \\
        [R_{\UU_3}(\Sigma_g)] &= q^{4 g - 1} \left(q - 1\right) + q^{6 g - 1} , \\
        [R_{\UU_4}(\Sigma_g)] &= q^{8 g - 1} \left(q - 1\right) + q^{10 g - 3} \left(q - 1\right) \left(q + 1\right) + q^{12 g - 3} , \\
        [R_{\UU_5}(\Sigma_g)] &= q^{12 g - 2} \left(q - 1\right)^{2} + q^{14 g - 3} \left(q - 1\right) \left(2 q - 1\right) \\ &\quad + q^{16 g - 5} \left(q - 1\right) \left(q + 1\right) \left(2 q - 1\right) + q^{18 g - 6} \left(q - 1\right) \left(2 q + 1\right) + q^{20 g - 6} . \tag*{\qed}
    \end{align*}
\end{theorem}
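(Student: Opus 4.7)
The plan is to apply the TQFT method of Theorem \ref{thm:tqft_method} with $G = \UU_n$, following verbatim the procedure of Subsections \ref{subsec:tqft_conjugacy_classes}--\ref{subsec:tqft_computing_Z}. The crucial simplification is that every element of $\UU_n$ is unipotent, so there is no need to index classes by non-trivial diagonal patterns and there are no ``non-unipotent'' families $\mathcal{C}_i$ to deal with separately; the $\K(\Var_k)$-submodule of $\K(\Var/\UU_n)$ generated by the $\UU_n$-conjugacy classes is preserved by $Z(\bdgenus)$, and Theorem \ref{thm:tqft_method} lets us read off $[R_{\UU_n}(\Sigma_g)]$ from the spectrum of a single matrix.

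First, I would enumerate the $\UU_n$-conjugacy classes of $\UU_n$ for $n = 2, 3, 4, 5$. Since $\UU_n$-conjugation is coarser than $\TT_n$-conjugation (the diagonal torus is no longer available to move things around), the $\TT_n$-classes of Subsection \ref{subsec:tqft_conjugacy_classes} refine into (possibly continuous families of) smaller $\UU_n$-orbits, which I would handle via families of algebraic representatives as in Subsection \ref{subsec:algebraic_representatives}. Existence of such representatives follows from Lemma \ref{lemma:unipotent_groups_are_special}, because every stabilizer is a closed subgroup of the unipotent group $\UU_n$ and hence special. Equations for the closures $\overline{\mathcal{U}}_i$ and the transition matrix of the stratification are then obtained exactly as in Subsection \ref{subsec:tqft_conjugacy_classes}.

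Next, I would compute the matrix representing $Z(\bdgenus)$ with respect to these generators. The coefficients $F_{ijk} = [\{g \in \mathcal{U}_j \mid g\xi_k \in \mathcal{U}_i\}]$ depend only on the multiplicative structure of $\UU_n \subset \TT_n$, so, after matching up the indices, they are inherited from the $\TT_n$ computation; this is precisely the reuse of $F_{ijk}$ alluded to in the remark following the statement. The coefficients $E_{ij}$ must however be recomputed, since $A$ now ranges over $\UU_n$ rather than $\TT_n$, which is done using Algorithm \ref{alg:virtual_classes}. Combining these through Corollary \ref{cor:fibration_over_representative} yields the matrix for $Z(\bdgenus)$.

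Finally, I would diagonalize this matrix and apply Theorem \ref{thm:tqft_method} to express $[R_{\UU_n}(\Sigma_g)]$ as a $\ZZ[q]$-linear combination of $g$-th powers of the eigenvalues; for $n \le 4$ the output simplifies by hand to the stated closed forms, while for $n = 5$ the bookkeeping is delegated to the implementation at \cite{GitHubMathCode}. The main obstacle is the proliferation of $\UU_5$-conjugacy classes together with the size of the resulting matrix; the saving grace is the direct reuse of $F_{ijk}$ and the absence of a toric factor $(q-1)^{2g}$, which both accounts for the pleasantly low-degree form of the answer and provides a useful internal consistency check via specialization from the $\TT_n$ computation through the projection $\TT_n \twoheadrightarrow \TT_n/\UU_n \cong \GG_m^{n-1}$.
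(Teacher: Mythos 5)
Your overall strategy (run the TQFT of Theorem \ref{thm:tqft_method} for $G = \UU_n$, reuse the $F_{ijk}$, recompute the $E_{ij}$, diagonalize) is the paper's, but the two halves of your plan are incompatible as written, and the incompatibility conceals the one idea that makes the computation go through. You propose to take as generators the $\UU_n$-conjugacy classes of $\UU_n$, organized into families of algebraic representatives because each $\TT_n$-class splits into a continuous family of $\UU_n$-orbits (incidentally, $\UU_n$-conjugation is \emph{finer}, not coarser, than $\TT_n$-conjugation). But then the coefficients $F_{ijk}$ would have to be indexed by these finer families, of which there are far more than $M$ (the count $k(\UU_5(\FF_q)) = \zeta_{\UU_5}(0)$ has degree $4$ in $q$, versus $M = 61$), and the quantities $[\{ g \in \mathcal{U}'_j \mid g \xi'_k \in \mathcal{U}'_i \}]$ for $\UU_n$-classes $\mathcal{U}'_\bullet$ are genuinely new data: they cannot be ``matched up'' with the $F_{ijk}$ of the $\TT_n$ computation. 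Worse, once the torus is discarded you would also have to verify by hand that $Z(\bdgenus)(\textbf{1}_j)$ restricted to a family of $\UU_n$-classes is constant along that family (for instance along the family of central classes $\{ I + tE_{13} \}$ of $\UU_3$, which are singleton $\UU_3$-orbits); the symmetry that makes it constant is precisely the torus action you have thrown away.

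The resolution, which is what the paper intends, is to keep the \emph{same} generators as in the $\tilde{\TT}_n$ computation, namely the unipotent $\tilde{\TT}_n$-conjugacy classes $\mathcal{U}_1, \ldots, \mathcal{U}_M$, which stratify $\UU_n$; only the bordism group changes to $\UU_n$. This is legitimate because $\UU_n$ is normal in $\TT_n$: the maps $\UU_n^2 \to \UU_n$, $(A,B) \mapsto [A,B]$, and $\mathcal{U}_j \times \mathcal{U}_k \to \UU_n$, $(g,h) \mapsto gh$, are $\tilde{\TT}_n$-equivariant for simultaneous conjugation, so Corollaries \ref{cor:fiber_product_over_representative} and \ref{cor:fibration_over_representative} apply with respect to the $\tilde{\TT}_n$-action even though $A$ and $B$ now range over $\UU_n$. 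With this choice $M$ stays at $61$ for $n = 5$, the $F_{ijk}$ are literally the same numbers, and only the coefficients $E_{ij} = [\{ A \in \UU_n \mid [A, \xi_j] \in \mathcal{U}_i \}]$, with $j$ running over the $M$ unipotent classes only, must be recomputed. Finally, your proposed consistency check via $\TT_n \twoheadrightarrow \GG_m^{n-1}$ does not work as stated: the fibres of $R_{\TT_n}(\Sigma_g) \to \GG_m^{(n-1)2g}$ are not all isomorphic to $R_{\UU_n}(\Sigma_g)$, so $[R_{\TT_n}(\Sigma_g)]$ is not $(q-1)^{(n-1)2g} [R_{\UU_n}(\Sigma_g)]$, as a comparison of Theorems \ref{thm:virtual_class_T5_representation_variety} and \ref{thm:virtual_class_Un_representation_varieties} confirms.
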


The computation times\footnote{As performed on an Intel\textregistered Xeon\textregistered CPU E5-4640 0 @ 2.40GHz. Since the computations were performed in parallel (64 cores), both the world time and the CPU time were recorded.} for the TQFT for the groups $G = \tilde{\TT}_n$ are listed in the table below. The diagonalization of $Z(\bdgenus)$ was not taken into account, as this was done manually.
\begin{center}
\begin{tabular}{c|c|c|c|c}
     $n$ & 2 & 3 & 4 & 5 \\ \hline
     world time & 1.92s & 5.17s & 1m19s & 1h38m \\
     CPU time & 2.11s & 31.50s & 28m12s & 50h9m
\end{tabular}
\end{center}

\subsection{Twisted representation varieties}
\label{subsec:twisted_representation_varieties}

One of the advantages of the TQFT method is that it is very flexible. Slightly modifying the setup, one can compute the virtual class of the \emph{twisted representation variety}.

\begin{definition}
    Let $G$ be an algebraic group. A \emph{$G$-parabolic structure} on $\Sigma_g$ is a set $Q$ of tuples $(p_i, \mathcal{C}_i)$ where $p_i \in \Sigma_g$ are distinct points and $\mathcal{C}_i$ are conjugacy classes in $G$. Given a $G$-parabolic structure $Q = \{ (p_1, \mathcal{C}_1), \ldots, (p_r, \mathcal{C}_r) \}$ on $\Sigma_g$, the \emph{twisted $G$-representation variety} of $(\Sigma_g, Q)$ is given by
    \begin{equation}
        \label{eq:explicit_twisted_representation_variety_surface}
        R_G(\Sigma_g, Q) = \left\{ (A_1, B_1, \ldots, A_g, B_g, C_1, \ldots, C_r) \in G^{2g} \times \prod_{i = 1}^{r} \mathcal{C}_i \mid \prod_{i = 1}^{g} [A_i, B_i] \prod_{i = 1}^{r} C_i = 1 \right\} .
\end{equation}
\end{definition}

Indeed, the twisted representation variety parametrizes all group morphisms $\pi_1(\Sigma_g \setminus \{ p_1, \ldots, p_r \}) \to G$ such that a small (positively oriented) loop around a puncture $p_i$ is mapped to the conjugacy class $\mathcal{C}_i$. We introduce a new map of $\K(\Var_k)$-modules that can compute the virtual class of $R_G(\Sigma_g, Q)$.

\begin{definition}
    Let $G$ be an algebraic group. For every conjugacy class $\mathcal{C}$ of $G$, define the $\K(\Var_k)$-module morphism
    \[ Z(\underset{\mathcal{C}}{\bdparabolic}) : \K(\Var/G) \to \K(\Var/G), \quad \left[ \begin{tikzcd} X \arrow{d}{f} \\ G \end{tikzcd} \right] \mapsto \left[\begin{tikzcd}[column sep=0em] X \times \mathcal{C} \arrow{d} & (x, c) \arrow[mapsto]{d} \\ G & f(x) c \end{tikzcd}\right] . \]
\end{definition}

\begin{theorem}
    For any algebraic group $G$, integer $g \ge 0$, and $G$-parabolic structure \\ $Q = \{ (p_1, \mathcal{C}_1), \ldots (p_r, \mathcal{C}_r) \}$ on $\Sigma_g$, we have
    \[ [R_G(\Sigma_g, Q)] = Z(\bdcupright) \circ Z(\underset{\mathcal{C}_r}{\bdparabolic}) \circ \cdots \circ Z(\underset{\mathcal{C}_1}{\bdparabolic}) \circ Z(\bdgenus)^g \circ Z(\bdcupleft)(1) . \]
\end{theorem}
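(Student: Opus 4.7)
The plan is to prove the theorem by direct computation, unfolding the definitions of the $\K(\Var_k)$-module morphisms in Definition \ref{def:tqft_maps} together with the newly defined map $Z(\underset{\mathcal{C}}{\bdparabolic})$, in close analogy with the proof of Theorem \ref{thm:tqft_method}. The idea is to chase what each operator does to an object of $\K(\Var/G)$ and confirm that the total variety at the end matches the explicit description \eqref{eq:explicit_twisted_representation_variety_surface}.

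First I would establish, by induction on $g$, that
\[ Z(\bdgenus)^g \circ Z(\bdcupleft)(1) = \left[\begin{tikzcd}[column sep=0em] G^{2g} \arrow{d} & (A_1, B_1, \ldots, A_g, B_g) \arrow[mapsto]{d} \\ G & \prod_{i=1}^g [A_i, B_i] \end{tikzcd}\right]. \]
The base case $g = 0$ is the identity $Z(\bdcupleft)(1) = [\{1\} \hookrightarrow G]$, using the empty-product convention. The inductive step is immediate from Definition \ref{def:tqft_maps}: applying $Z(\bdgenus)$ to the previous class adjoins a pair $(A_{g+1}, B_{g+1}) \in G^2$ and post-multiplies the structural map by $[A_{g+1}, B_{g+1}]$.

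Next, I would apply the parabolic maps $Z(\underset{\mathcal{C}_i}{\bdparabolic})$ in the order $i = 1, 2, \ldots, r$. By definition, each such map takes a variety $X \to G$ to $X \times \mathcal{C}_i \to G$ with structural map post-multiplied by the new $\mathcal{C}_i$-coordinate. Composing all $r$ of them on the class obtained above yields
\[ \left[\begin{tikzcd}[column sep=0em] G^{2g} \times \prod_{i=1}^r \mathcal{C}_i \arrow{d} & (A_\bullet, B_\bullet, C_\bullet) \arrow[mapsto]{d} \\ G & \prod_{i=1}^g [A_i, B_i] \cdot \prod_{i=1}^r C_i \end{tikzcd}\right]. \]
Finally, $Z(\bdcupright)$ extracts the preimage of $1 \in G$ under this structural map, which by \eqref{eq:explicit_twisted_representation_variety_surface} is precisely $R_G(\Sigma_g, Q)$.

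The argument is a formal chain of definition unpackings, so there is no real obstacle. The only care needed is to respect the order in which the parabolic factors appear in the product, which is dictated by the right-to-left reading of the composition; this matches the order of the factors in the constraint of \eqref{eq:explicit_twisted_representation_variety_surface}.
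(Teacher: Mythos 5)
Your proposal is correct and is exactly the argument the paper intends: the paper's proof is the one-line remark that the statement "follows from the explicit form of the twisted $G$-representation variety," and your unpacking of the definitions (induction on $g$ for the genus maps, then sequential application of the parabolic maps, then the fiber over $1$) is precisely that verification spelled out. The care you take with the ordering of the factors $C_1, \ldots, C_r$ is the right detail to check, and it works out as you say.
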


\begin{proof}
    This follows from the explicit form of the twisted $G$-representation variety of $\Sigma_g$ \eqref{eq:explicit_twisted_representation_variety_surface}.
\end{proof}

Interestingly, it turns out that for $G = \tilde{\TT}_n$ the matrix corresponding to $Z(\underset{\mathcal{U}_k}{\bdparabolic})$, with respect to the generators given by the unipotent conjugacy classes, is completely determined by the coefficients $F_{ijk}$ computed in Subsection \ref{subsec:tqft_computing_Z}. Namely,
\begin{align*}
    c_! Z(\underset{\mathcal{U}_k}{\bdparabolic})(\textbf{1}_j)|_{\mathcal{U}_i}
    &= \left[\left\{ (g, h) \in \mathcal{U}_j \times \mathcal{U}_k \mid g h \in \mathcal{U}_i \right\}\right] \\
    &= \sum_{k = 1}^M \left[\left\{ g \in \mathcal{U}_j \mid g \xi_k \in \mathcal{U}_i \right\}\right] [\mathcal{U}_k] \\
    &= \sum_{k = 1}^{M} F_{ijk} \, [\mathcal{U}_k]
\end{align*}
where in the second equality we used Corollary \ref{cor:fiber_product_over_representative}. Using Corollary \ref{cor:fibration_over_representative}, we find
\[ Z(\underset{\mathcal{U}_k}{\bdparabolic})(\textbf{1}_j) = \sum_{i = 1}^{M} F_{ijk} \, [\mathcal{U}_k] / [\mathcal{U}_i] \cdot \textbf{1}_i . \]







    
    



\section{Arithmetic method}
\label{sec:arithmetic_method}

The \emph{arithmetic method} was originally introduced in \cite{HauselRodriguezVillegas2008} in order to compute the $E$-polynomial of the $\GL_n$-representation varieties and the (twisted) $\GL_n$-character varieties of $\Sigma_g$. Basically, the method consists of two ingredients. One of them is Katz' theorem \cite[Theorem 6.1.2]{HauselRodriguezVillegas2008}, which reduces the computation of the $E$-polynomial to the counting of points over finite fields.
\begin{theorem}[Katz' theorem]
    \label{thm:katz_theorem}
    Let $X$ be a complex variety and $\tilde{X}$ a spreading-out of $X$ over a finitely generated $\ZZ$-algebra $R \subset \CC$.
    If there exists a polynomial $P \in \ZZ[q]$ such that $|(\tilde{X} \times_R \FF_q)(\FF_q)| = P(q)$ for all ring morphisms $R \to \FF_q$, then the $E$-polynomial of $X$ is given by $P(uv) \in \ZZ[u, v]$.
\end{theorem}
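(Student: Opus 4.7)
The plan is to prove the theorem via Deligne's theory of weights and the comparison between $\ell$-adic étale cohomology over finite fields and the classical mixed Hodge structure on singular cohomology of $X$.

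First, I would invoke the Grothendieck--Lefschetz trace formula, which expresses
\[ |(\tilde{X} \times_R \FF_q)(\FF_q)| = \sum_{i} (-1)^i \trace\!\left( \mathrm{Frob}_q \;\middle|\; H^i_c\bigl((\tilde{X} \times_R \FF_q)_{\overline{\FF}_q}, \QQ_\ell\bigr) \right) \]
for any prime $\ell$ coprime to $q$. By Deligne's purity theorem (Weil II), the eigenvalues of geometric Frobenius on $H^i_c$ are algebraic integers whose Galois conjugates all have absolute value $q^{w/2}$ for integer weights $w \le i$. Thus the point count is an algebraic combination of these Frobenius eigenvalues.

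Next I would use the hypothesis that this alternating sum is a \emph{fixed} polynomial $P(q)$ independent of the chosen ring morphism $R \to \FF_q$ (and hence independent of $q$). Combined with the bound on the absolute values, a pigeonhole-type argument of Katz shows that the Frobenius eigenvalues cannot contribute non-cancelling irrational terms; more precisely, the only way to produce a polynomial in $q$ as an alternating sum of Frobenius eigenvalues subject to the purity bound is for each eigenvalue appearing (after cancellation) to be an integer power $q^w$ of $q$. This forces the $\ell$-adic Frobenius action on $H^i_c$ to be of pure Tate type in the relevant bidegrees.

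Then I would transfer this information to the complex side via the comparison between $\ell$-adic cohomology of the spreading-out and singular cohomology $H^i_c(X, \QQ)$, together with Deligne's compatibility between Frobenius weights and the weight filtration of the mixed Hodge structure on $H^i_c(X(\CC), \QQ)$. The purity conclusion above then says that the mixed Hodge structure on $H^i_c(X, \QQ)$ is of Hodge--Tate type: all nontrivial Hodge numbers $h^{k;p,q}_c(X)$ are concentrated on the diagonal $p = q$. Writing $P(q) = \sum_w a_w q^w$, one identifies $h^{2w;w,w}_c(X) = a_w$ and $h^{k;p,q}_c(X) = 0$ for $p \neq q$, whence
\[ e(X) = \sum_w a_w (uv)^w = P(uv) . \]

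The main obstacle is the middle step, which is the genuinely deep content of the theorem: it requires Deligne's Weil II (purity of weights), the fact that spreading-outs exist and that $\ell$-adic cohomology of the generic and special fibres are comparable, and the careful translation of Frobenius weight information into Hodge-theoretic weight information. All subsequent bookkeeping of mixed Hodge numbers is elementary once purity of Tate type is in place.
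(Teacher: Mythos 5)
The paper itself offers no proof of this statement: it is quoted directly from the appendix by Katz to \cite{HauselRodriguezVillegas2008} (Theorem 6.1.2 there), so your sketch must be measured against Katz' actual argument rather than anything internal to the paper. Your skeleton (Grothendieck--Lefschetz, Weil II, pinning down the Frobenius eigenvalues from the point-count polynomial, then transferring to Hodge theory) is the right one, but the decisive step is missing, and one intermediate claim is too strong. The overclaim first: the hypothesis only controls the alternating sum $\sum_k(-1)^k\operatorname{Tr}(\operatorname{Frob}\mid H^k_c)$, so (via linear independence of characters applied to all extensions $\FF_{q^n}$) you may conclude only that the \emph{virtual} Galois representation $\sum_k(-1)^k[H^k_c]$ equals $\sum_w a_w[\QQ_\ell(-w)]$; individual $H^i_c$ need not be of Tate type, since non-Tate pieces may cancel between consecutive degrees. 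This does not damage the final formula, because $e(X)$ is itself an alternating sum, but your assertion that each $h_c^{k;p,q}$ vanishes off the diagonal does not follow and is not needed.

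The genuine gap is in the transfer to the Hodge side. Deligne's compatibility between Frobenius weights and the weight filtration of the mixed Hodge structure recovers only the grading by $p+q$: it yields $\sum_{p+q=m}e^{p,q}=a_{m/2}$ (and $0$ for $m$ odd), where $e^{p,q}=\sum_k(-1)^k h_c^{k;p,q}$, i.e.\ it proves $e(X;t,t)=P(t^2)$ and nothing more. It says nothing about how a pure weight-$2w$ piece distributes among bidegrees $(p,q)$ with $p+q=2w$; a weight-$2$ Hodge structure all of whose Frobenius eigenvalues equal $q$ could a priori have $h^{2,0}\neq 0$, and excluding this by ``weights'' alone is essentially the Tate conjecture. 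Katz closes this gap with $p$-adic Hodge theory: the \'etale--de Rham comparison (Fontaine--Messing, Faltings, Tsuji) identifies the Hodge--Tate weights of the $p$-adic \'etale cohomology with the jumps of the Hodge filtration, and since Hodge--Tate weights are additive in exact sequences and determined by semisimplification, the Tate-type virtual Galois representation (weight-graded piece by weight-graded piece) forces the Hodge filtration on $\operatorname{gr}^W_{2w}$ to be concentrated in degree $w$, giving $e^{p,q}=a_p\delta_{p,q}$ and hence $e(X)=P(uv)$. Without this arithmetic-to-$F$-filtration input, your argument proves only the weight-polynomial version of the theorem.
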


\begin{remark}
    For all varieties we consider, natural spreading-outs can be chosen over $R = \ZZ$.
\end{remark}

The second ingredient is Frobenius' formula \cite[Equation 2.3.8]{HauselRodriguezVillegas2008}, which relates the number of points of the representation variety of $\Sigma_g$ over finite fields to the representation theory of $G$.
\begin{theorem}[Frobenius' formula]
    \label{thm:frobenius_formula}
    Let $G$ be a finite group. The number of points of the $G$-representation variety of the closed surface $\Sigma_g$ is given by
    \[ |R_G(\Sigma_g)| = |G| \sum_{\chi \in \widehat{G}} \left( \frac{|G|}{\chi(1)} \right)^{2g - 2} , \]
    where $\widehat{G}$ denotes the set of irreducible complex characters of $G$.
\end{theorem}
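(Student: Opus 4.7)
The plan is to refine the statement by allowing the product of commutators to equal an arbitrary element $c \in G$, and then prove the refined identity by induction on $g$. Concretely, define the class function
\[ N_g(c) = \#\left\{ (A_1, B_1, \ldots, A_g, B_g) \in G^{2g} \mid \prod_{i=1}^g [A_i, B_i] = c \right\}, \]
so that $|R_G(\Sigma_g)| = N_g(1)$. I will prove
\[ N_g(c) = |G|^{2g-1} \sum_{\chi \in \widehat{G}} \frac{\chi(c)}{\chi(1)^{2g-1}}, \]
from which the theorem follows by specializing to $c = 1$ and rewriting $|G|^{2g-1}/\chi(1)^{2g-2} = |G| \cdot (|G|/\chi(1))^{2g-2}$.

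For the base case $g = 1$, the idea is to expand $N_1$ in the orthonormal basis of irreducible characters: $\langle N_1, \chi \rangle = \frac{1}{|G|} \sum_{A, B} \overline{\chi([A, B])}$. The key computation is to show, for every irreducible character $\chi$,
\[ \sum_{A, B \in G} \chi([A, B]) = \frac{|G|^2}{\chi(1)}. \]
For this, I will fix an irreducible representation $\rho$ with character $\chi$ and observe that, for fixed $A$, the operator $T_A = \sum_{B \in G} \rho(B) \rho(A^{-1}) \rho(B)^{-1}$ commutes with every $\rho(C)$ via the reindexing $B \mapsto CB$. Schur's lemma then forces $T_A$ to be a scalar, and its trace $|G|\,\chi(A^{-1})$ identifies that scalar as $|G|\,\chi(A^{-1})/\chi(1)$. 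Multiplying by $\rho(A)$, taking traces, and summing over $A$ reduces everything to the orthogonality relation $\sum_A \chi(A)\overline{\chi(A)} = |G|$, which closes the base case.

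For the inductive step, I will use the convolution identity $N_g = N_{g-1} * N_1$, where $(f * h)(c) = \sum_{ab = c} f(a) h(b)$; this holds because any factorization $c = \bigl(\prod_{i \le g-1} [A_i, B_i]\bigr)\,[A_g, B_g]$ distributes the tuples on each side. The algebra of class functions under $*$ has the irreducible characters as orthogonal eigenvectors: $\chi * \psi = \delta_{\chi, \psi}\,\frac{|G|}{\chi(1)}\,\chi$ (an immediate consequence of Schur orthogonality). Feeding the inductive expansion of $N_{g-1}$ and the base-case expansion of $N_1$ into this identity gives the claim for $N_g$ by a direct calculation.

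The substantive content lies in the Schur's-lemma computation for the base case; once the identity $\sum_{A,B} \chi([A,B]) = |G|^2/\chi(1)$ is established, the rest is purely formal bookkeeping with the convolution algebra and the Schur orthogonality relations, culminating in the evaluation at $c = 1$.
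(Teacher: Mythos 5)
Your proof is correct. The paper does not actually prove this statement — it only cites \cite[Equation 2.3.8]{HauselRodriguezVillegas2008} — and your argument is the standard classical one (Frobenius/Burnside): compute $\sum_{A,B}\chi([A,B]) = |G|^2/\chi(1)$ via Schur's lemma, expand the commutator-counting class function $N_1$ in irreducible characters, and induct on $g$ using the convolution identity $\chi * \psi = \delta_{\chi,\psi}\,(|G|/\chi(1))\,\chi$; all three steps check out. The only (trivial) omission is the case $g=0$, where the identity reduces to column orthogonality $\sum_\chi \chi(1)^2 = |G|$.
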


Therefore, in this section we will study the representation theory of the groups of upper triangular matrices $\TT_n$ and unipotent matrices $\UU_n$ over finite fields $\FF_q$.
In particular, we will describe an algorithm to compute the representation zeta function of these groups, by means of semidirect decompositions.

\subsection{Representation zeta functions}

\begin{definition}
    Let $G$ be a finite group. The \emph{representation zeta function} of $G$ is the function
    \[ \zeta_G(s) = \sum_{\chi \in \widehat{G}} \chi(1)^{-s} . \]
\end{definition}

\begin{example}
    \begin{itemize}
        \item The cyclic group $\ZZ/n\ZZ$ has $n$ irreducible representations of degree $1$, so we have $\zeta_{\ZZ/n\ZZ}(s) = n$.
        \item The symmetric group $S_3$ has two irreducible representations of degree $1$, and one of degree $2$, so $\zeta_{S_3}(s) = 2 + 2^{-s}$.
        \item For finite groups $G$ and $H$, the irreducible representations of the product $G \times H$ are of the form $\rho \otimes \tau$, where $\rho$ and $\tau$ are irreducible representations of $G$ and $H$, respectively. Hence, $\zeta_{G \times H}(s) = \zeta_G(s) \, \zeta_H(s)$.
    \end{itemize}
\end{example}

\begin{example}
    Evaluating $\zeta_G(s)$ in $s = 0$ yields the number of irreducible representations, which is equal to the number of conjugacy classes of $G$.
\end{example}

From Theorem \ref{thm:frobenius_formula} we now immediately obtain the following corollary.
\begin{corollary}
    \label{cor:point_count_from_zeta_function}
    Let $G$ be a finite group. The number of points of the $G$-representation variety of the closed surface $\Sigma_g$ is given by
    \[ |R_G(\Sigma_g)| = |G|^{2g - 1} \zeta_G(\chi(\Sigma_g)) , \]
    where $\chi(\Sigma_g) = 2g - 2$ denotes the Euler characteristic of $\Sigma_g$. \qed
\end{corollary}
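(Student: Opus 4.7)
The plan is to derive this identity as a direct algebraic rearrangement of Frobenius' formula (Theorem \ref{thm:frobenius_formula}). Starting from
\[ |R_G(\Sigma_g)| = |G| \sum_{\chi \in \widehat{G}} \left( \frac{|G|}{\chi(1)} \right)^{2g - 2}, \]
I would distribute the exponent inside the sum, split the summand as $|G|^{2g-2} \cdot \chi(1)^{-(2g-2)}$, and pull the $\chi$-independent factor $|G|^{2g-2}$ out of the sum. Combined with the leading $|G|$, this produces the prefactor $|G|^{2g-1}$.

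The remaining sum $\sum_{\chi \in \widehat{G}} \chi(1)^{-(2g-2)}$ is, by the definition of the representation zeta function in the preceding subsection, exactly $\zeta_G(2g-2)$. Under the convention $\chi(\Sigma_g) = 2g - 2$ recorded in the statement of the corollary, this is $\zeta_G(\chi(\Sigma_g))$, which closes the argument.

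There is no substantive obstacle; the entire argument is exponent bookkeeping. As a consistency check, one can verify the extremal cases. For $g = 0$, the classical identity $\sum_{\chi} \chi(1)^2 = |G|$ specializes the formula to $|R_G(\Sigma_0)| = |G|^{-1} \cdot |G| = 1$, matching the triviality of $\pi_1(\Sigma_0)$. For $g = 1$, one recovers $|R_G(\Sigma_1)| = |G| \cdot \zeta_G(0) = |G| \cdot k(G)$, the familiar torus count, where $k(G)$ is the number of conjugacy classes of $G$, consistent with the fact that $\zeta_G(0) = |\widehat{G}| = k(G)$.
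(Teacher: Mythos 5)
Your proof is correct and is exactly the argument the paper intends: the corollary is stated as following "immediately" from Frobenius' formula, and the rearrangement you describe — pulling $|G|^{2g-2}$ out of the sum and recognizing $\sum_{\chi}\chi(1)^{-(2g-2)}$ as $\zeta_G(2g-2)$ — is that immediate derivation. The consistency checks at $g=0$ and $g=1$ are a nice bonus but not needed.
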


\subsection{Semidirect products}

Consider a finite group $G = N \rtimes H$, with $N \subset G$ an abelian normal subgroup. As $N$ is abelian, its irreducible representations are one-dimensional and given by $X = \Hom(N, \CC^*)$. The group $G$ acts on $X$ via
\[ (g \cdot \chi)(n) = \chi(g^{-1} n g) \qquad \textup{ for all } \chi \in X, \; g \in G \textup{ and } n \in N . \]
Let $(\chi_i)_{i \in X/H}$ be a family of representatives for the orbits in $X$ under $H$. For each $i \in X/H$, let $H_i = \{ h \in H \mid h \cdot \chi_i = \chi_i \}$ denote the stabilizer of $\chi_i$, and let $G_i = N \rtimes H_i \subset G$ be the corresponding subgroup of $G$. We can extend $\chi_i$ to $G_i$ by setting
\[ \chi_i(nh) = \chi_i(n) \qquad \textup{ for all } n \in N \textup{ and } h \in H_i . \]
Indeed, this defines a character since $\chi_i(n_1 h_1 n_2 h_2) = \chi_i(n_1 (h_1 n_2 h_1^{-1}) h_1 h_2) = \chi_i(n_1 h_1 n_2 h_1^{-1}) = \chi_i(n_1) \chi_i(n_2)$ for all $n_1, n_2 \in N$ and $h_1, h_2 \in H_i$. Now, any irreducible representation $\rho$ of $H_i$ induces a representation $\tilde{\rho}$ of $G_i$ by composing with the projection $G_i \to G_i / N = H_i$, and we define
\[ \theta_{i, \rho} = \textup{Ind}_{G_i}^G\left( \chi_i \otimes \tilde{\rho} \right) . \]
It turns out that these are precisely all the irreducible representations of $G$.
\begin{proposition}[{\cite[Proposition 25]{Serre1997}}]
    \label{prop:representations_semidirect_product}
    \begin{enumerate}[(i)]
        \item $\theta_{i, \rho}$ is irreducible.
        \item If $\theta_{i, \rho}$ is isomorphic to $\theta_{i', \rho'}$, then $i = i'$ and $\rho$ is isomorphic to $\rho'$.
        \item Every irreducible representation of $G$ is isomorphic to some $\theta_{i, \rho}$.
    \end{enumerate}
\end{proposition}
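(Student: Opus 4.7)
The plan is to apply the \emph{Mackey machine} for the normal abelian subgroup $N \lhd G$, namely Clifford theory to break an arbitrary irreducible representation of $G$ into pieces indexed by the $H$-orbits on $X$, and Mackey's irreducibility criterion to verify that the constructions $\theta_{i,\rho}$ are irreducible and pairwise non-isomorphic. The key observation is that for any representation $\pi$ of $G$, the restriction $\pi|_N$ decomposes as $\bigoplus_{\chi \in X} V_\chi$ into isotypic components, and the $G$-action permutes these components according to the $H$-action on $X$ described in the text (conjugation by $N$ acts trivially on $X$).

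For part (iii), I would take an arbitrary irreducible $\pi$ of $G$ and look at the isotypic decomposition of $\pi|_N$. Irreducibility of $\pi$ forces the characters appearing to form a single $H$-orbit, which I represent by some $\chi_i$. The isotypic piece $V = V_{\chi_i}$ is then stable under $G_i = N \rtimes H_i$, and $N$ acts on $V$ by the scalar character $\chi_i$. Extending $\chi_i$ to a character of $G_i$ as in the excerpt and twisting by $\chi_i^{-1}$ kills the $N$-action, identifying $V$ with a representation $\tilde\rho$ of $G_i$ pulled back from an irreducible $\rho$ of $H_i$. Frobenius reciprocity and the transitivity of the $H$-action on the isotypic components of $\pi|_N$ then yield $\pi \cong \mathrm{Ind}_{G_i}^G(\chi_i \otimes \tilde\rho) = \theta_{i,\rho}$.

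For part (i), I apply Mackey's irreducibility criterion: $\theta_{i,\rho}$ is irreducible iff $\chi_i \otimes \tilde\rho$ is irreducible (which is obvious, since $\tilde\rho$ is irreducible and $\chi_i$ is a character) and for every $g \in G \setminus G_i$ the $G_i$-representation $\chi_i \otimes \tilde\rho$ and its $g$-conjugate are disjoint on the intersection $G_i \cap gG_ig^{-1}$. Since $N$ is contained in this intersection and is normal, it suffices to check disjointness after restriction to $N$; there the two representations are multiples of $\chi_i$ and $g\cdot\chi_i$, which differ precisely because $g \notin G_i = \mathrm{Stab}_G(\chi_i)$. Part (ii) follows from the same isotypic analysis: restricting $\theta_{i,\rho}$ to $N$ recovers the orbit of $\chi_i$ (with multiplicity $\dim\rho$), so an isomorphism $\theta_{i,\rho} \cong \theta_{i',\rho'}$ forces $i = i'$, after which the explicit bijection between $G_i$-representations with $\chi_i$-isotypic $N$-restriction and representations of $H_i$ yields $\rho \cong \rho'$.

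The main obstacle is bookkeeping rather than substance: one must verify that $\chi_i$ genuinely extends to a character of $G_i$ (handled in the excerpt by the stabilizer condition), and justify the reduction of Mackey's criterion on $G_i \cap gG_ig^{-1}$ to the subgroup $N$. A second route, which avoids Mackey entirely, is to verify the identity
\[ \sum_{i \in X/H} \sum_{\rho \in \widehat{H}_i} (\dim \theta_{i,\rho})^2 = |G| \]
using $\dim \theta_{i,\rho} = [G : G_i] \dim \rho$ together with $\sum_{\rho \in \widehat{H}_i} (\dim\rho)^2 = |H_i|$ and the orbit-stabilizer identity $\sum_i [H : H_i] = |X| = |N|$; combined with part (i) and the injectivity claim in part (ii), this dimensional count forces the $\theta_{i,\rho}$ to exhaust $\widehat{G}$. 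Either route reproduces the classical statement (Serre, Proposition 25), which we invoke here as a black box.
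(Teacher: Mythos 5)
The paper offers no proof of its own here: it simply cites Serre's Proposition 25, whose proof is precisely the Clifford/Mackey argument you sketch (isotypic decomposition of the restriction to $N$, stability under the stabilizer $G_i$, and Frobenius reciprocity). Your proposal is correct and matches that standard argument, so there is nothing further to reconcile.
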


In terms of representation zeta functions, this proposition translates to the following corollary, using the fact that $\dim \big( \textup{Ind}_H^G(\rho) \big) = \dim(\rho) \cdot [G : H]$.

\begin{corollary}
    \label{cor:zeta_function_semidirect_product}
    The representation zeta function of $G$ is given by
    \[ \zeta_G(s) = \sum_{i \in X/H} \zeta_{H_i}(s) \cdot [G : G_i]^{-s} = \sum_{i \in X/H} \zeta_{H_i}(s) \cdot [H : H_i]^{-s} . \tag*{\qed} \]
\end{corollary}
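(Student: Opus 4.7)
The plan is to apply Proposition \ref{prop:representations_semidirect_product} directly, which already gives a complete parametrization of $\widehat{G}$ by pairs $(i, \rho)$ with $i \in X/H$ and $\rho \in \widehat{H_i}$. So the proof reduces essentially to a dimension count combined with reindexing the sum defining $\zeta_G(s)$.

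First I would compute the dimension of $\theta_{i, \rho} = \textup{Ind}_{G_i}^G(\chi_i \otimes \tilde{\rho})$. Since $\chi_i$ is one-dimensional and $\tilde{\rho}$ is the pullback of $\rho$ along $G_i \twoheadrightarrow H_i$, we have $\dim(\chi_i \otimes \tilde{\rho}) = \dim(\rho)$, and since induction multiplies dimension by the index, $\dim \theta_{i, \rho} = [G : G_i] \cdot \dim(\rho)$.

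Next, using parts (i)--(iii) of Proposition \ref{prop:representations_semidirect_product}, the map $(i, \rho) \mapsto \theta_{i, \rho}$ is a bijection from $\bigsqcup_{i \in X/H} \widehat{H_i}$ onto $\widehat{G}$. So I would simply split the defining sum:
\[ \zeta_G(s) = \sum_{\chi \in \widehat{G}} \chi(1)^{-s} = \sum_{i \in X/H} \sum_{\rho \in \widehat{H_i}} \bigl( [G : G_i] \cdot \dim(\rho) \bigr)^{-s} = \sum_{i \in X/H} [G : G_i]^{-s} \, \zeta_{H_i}(s) . \]

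Finally, for the second equality in the corollary, I would note that $G_i = N \rtimes H_i$ has order $|N| \cdot |H_i|$, while $G = N \rtimes H$ has order $|N| \cdot |H|$, so $[G : G_i] = [H : H_i]$. There is no real obstacle here; the only point requiring care is bookkeeping the dimension of the induced representation, which is routine.
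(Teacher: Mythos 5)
Your proposal is correct and is exactly the argument the paper intends: the corollary is stated as an immediate consequence of Proposition \ref{prop:representations_semidirect_product} together with the dimension formula $\dim\bigl(\textup{Ind}_{G_i}^G(\chi_i \otimes \tilde{\rho})\bigr) = [G : G_i]\cdot\dim(\rho)$, and your reindexing of the sum plus the observation $[G:G_i]=[H:H_i]$ fills in precisely the routine bookkeeping the paper leaves implicit.
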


\subsection{Upper triangular matrices}
\label{subsec:arithmetic_upper_triangular}

Consider the group $\UU_n(\FF_q) = \{ A \in \textup{GL}_n(\FF_q) \mid A_{ii} = 1 \textup{ and } A_{ij} = 0 \textup{ for } i > j \}$ of $n \times n$ unipotent matrices over a finite field $\FF_q$. Define the subgroup $N \subset \UU_n(\FF_q)$ as the kernel of
\[ \UU_n(\FF_q) \to \UU_{n - 1}(\FF_q), \quad A \mapsto (A_{ij})_{i, j = 1}^{n - 1} , \]
so that the quotient $\UU_n(\FF_q) / N$ is isomorphic to $\UU_{n - 1}(\FF_q)$. Now we have a split exact sequence
\[ \begin{tikzcd} 0 \arrow{r} & N \arrow{r} & \UU_n(\FF_q) \arrow{r} & \UU_{n - 1}(\FF_q) \arrow{r} \arrow[bend right = 25]{l} & 0 , \end{tikzcd} \]
which yields a semidirect decomposition $\UU_n(\FF_q) = N \rtimes \UU_{n - 1}(\FF_q)$, where $N$ is abelian.
Moreover, for any unipotent subgroup $U \subset \UU_n(\FF_q)$, the above exact sequence can be intersected with $U$ to obtain
\[ \begin{tikzcd} 0 \arrow{r} & U \cap N \arrow{r} & U \arrow{r} & U \cap \UU_{n - 1}(\FF_q) \arrow{r} \arrow[bend right = 25]{l} & 0 , \end{tikzcd} \]
yielding a semidirect decomposition $U = (U \cap N) \rtimes (U \cap \UU_{n - 1}(\FF_q))$.

Identifying $N \cong \GG_a^{n - 1}(\FF_q)$, the irreducible characters $\chi_\alpha \in X = \Hom(N, \CC^*)$ of $N$ are given by tuples $\alpha = (\alpha_1, \ldots, \alpha_{n - 1}) \in \GG_a^{n - 1}(\FF_q)$, via
\[ \chi_\alpha(x) = \zeta_p^{\langle \alpha, x \rangle} \quad \textup{ for all } x \in N , \]
where $\langle \alpha, x \rangle = \sum_{i, j} \alpha_{ij} x_{ij} \in \FF_p$, with $\alpha_{ij}$ and $x_{ij}$ are the coefficients of $\alpha_i$ and $x_i$, viewing $\GG_a^{n - 1}(\FF_q)$ as vector space over $\FF_p$.
Since $\UU_{n - 1}(\FF_q)$ acts on $N \cong \GG_a^{n - 1}(\FF_q)$ by left multiplication, it acts on $X \cong \GG_a^{n - 1}(\FF_q)$ by right multiplication.

From now on, we will omit the field $\FF_q$ from the group, simply writing $G$ instead of $G(\FF_q)$.


\begin{example}
    \label{ex:zeta_U3}
    Consider $\UU_3 \cong \GG_a^2 \rtimes \UU_2$, for which $X = \Hom(\GG_a^2, \CC^*) \cong \GG_a^2$, and $H$ acts on $\begin{pmatrix} \alpha & \beta \end{pmatrix} \in X$ by right-multiplication, that is,
    \[ \begin{pmatrix} \alpha & \beta \end{pmatrix} \begin{pmatrix} 1 & a \\ 0 & 1 \end{pmatrix} = \begin{pmatrix} \alpha & \beta + a \alpha \end{pmatrix} . \]
    Hence, the orbits in $X$ under $H$ are given by $\left\{ \begin{pmatrix} \alpha & \beta \end{pmatrix} : \beta \in \FF_q \right\}$ for all $\alpha \ne 0$ and $\left\{ \begin{pmatrix} 0 & \beta \end{pmatrix} \right\}$ for all $\beta \in \FF_q$.
    We choose the following representatives:
    \begin{itemize}
        \item $\chi_\alpha = \begin{pmatrix} \alpha & 0 \end{pmatrix}$, which yields $H_\alpha = \{ 1 \}$, so we get a contribution to the zeta function equal to
        \[ (q - 1) \cdot \zeta_{\{ 1 \}}(s) \cdot [H : H_\alpha]^{-s} = (q - 1) q^{-s} . \]
        \item $\chi_\beta = \begin{pmatrix} 0 & \beta \end{pmatrix}$, which yields $H_\beta = \UU_2$, so we get a contribution to the zeta function equal to
        \[ q \cdot \zeta_{\UU_2}(s) \cdot [H : H_\beta]^{-s} = q^2 . \]
    \end{itemize}
    Adding up the contributions, it follows that $\zeta_{\UU_3}(s) = q^2 + (q - 1) q^{-s}$.
\end{example}

\begin{example}
    \label{ex:zeta_U4}
    Consider $\UU_4 \cong \GG_a^3 \rtimes \UU_3$, for which $X = \Hom(\GG_a^3, \CC^*) \cong \GG_a^3$, and $H$ acts on $\begin{pmatrix} \alpha & \beta & \gamma \end{pmatrix} \in X$ by right-multiplication, that is,
    \[ \begin{pmatrix} \alpha & \beta & \gamma \end{pmatrix} \begin{pmatrix} 1 & a & b \\ 0 & 1 & c \\ 0 & 0 & 1 \end{pmatrix} = \begin{pmatrix} \alpha & \beta + a \alpha & \gamma + b \alpha + c \beta \end{pmatrix} . \]
    Hence, the orbits in $X$ under $H$ are given by $\left\{ \begin{pmatrix} \alpha & \beta & \gamma \end{pmatrix} : \beta, \gamma \in \FF_q \right\}$ for all $\alpha \ne 0$, $\left\{ \begin{pmatrix} 0 & \beta & \gamma \end{pmatrix} : \gamma \in \FF_q  \right\}$ for all $\beta \ne 0$, and $\left\{ \begin{pmatrix} 0 & 0 & \gamma \end{pmatrix} \right\}$ for all $\gamma \in \FF_q$.
    We choose the following representatives:
    \begin{itemize}
        \item $\chi_\alpha = \begin{pmatrix} \alpha & 0 & 0 \end{pmatrix}$ yields $H_\alpha \cong \GG_a$, contributing $(q - 1) \cdot \zeta_{\GG_a}(s) \cdot [H : H_\alpha]^{-s} = q^{1 - 2s} (q - 1)$.
        \item $\chi_\beta = \begin{pmatrix} 0 & \beta & 0 \end{pmatrix}$ yields $H_\beta \cong \GG_a^2$, contributing $(q - 1) \cdot \zeta_{\GG_a^2}(s) \cdot [H : H_\beta]^{-s} = q^{2 - s} (q - 1)$.
        \item $\chi_\gamma = \begin{pmatrix} 0 & 0 & \gamma \end{pmatrix}$ yields $H_\gamma = \UU_3$, contributing $q \cdot \zeta_{\UU_3} \cdot [H : H_\gamma]^{-s} = q^3 + (q - 1) q^{1 - s}$.
    \end{itemize}
    In total, $\zeta_{\UU_4}(s) = q^3 + q^{1 - s} (q - 1) (q + 1) + q^{1 - 2s} (q - 1)$.
\end{example}

The construction as described above works more generally for a connected algebraic subgroup $G \subset \TT_n$. Namely, let $G'$ be the image of the map $G \to \tilde{\TT}_n$ given by $A \mapsto A / A_{nn}$. Then either $G \cong G'$ or $G \cong \GG_m \times G'$, because the only connected subgroups of $\GG_m$ are $\{ 1 \}$ and $\GG_m$ itself. Since $\zeta_{\GG_m}(s) = q - 1$ is known, we may assume $G \subset \tilde{\TT}_n$. Now let $H$ and $N$ be the image and kernel, respectively, of the map
\[ G \to \TT_{n - 1}, \quad A \mapsto (A_{ij})_{i, j = 1}^{n - 1} , \]
so that $G = N \rtimes H$ with $N$ abelian and $H \subset \TT_{n - 1}$, and we can repeat the above arguments.

\begin{example}
    Consider $G = \TT_2 \cong \GG_m \times \tilde{\TT}_2$ with $\tilde{\TT}_2 \cong \GG_a \rtimes \GG_m$, for which $X = \Hom(\GG_a, \CC^*) \cong \GG_a$ and $H$ acts on $\alpha \in X$ by multiplication. Hence, the orbits in $X$ under $H$ are given by $\{ 0 \}$ and $\{ \alpha : \alpha \ne 0 \}$. We choose the following representatives:
    \begin{itemize}
        \item $\chi_0 = 0$ yields $H_0 = \GG_m$, contributing $\zeta_{\GG_m}(s) = q - 1$,
        \item $\chi_1 = 1$ yields $H_1 = \{ 1 \}$, contributing $(q - 1)^{-s}$.
    \end{itemize}
    In total, $\zeta_{\TT_2}(s) = \zeta_{\GG_m}(s) \, \zeta_{\tilde{\TT}_2}(s) = (q - 1)((q - 1) + (q - 1)^{-s}) = (q - 1)^2 + (q - 1)^{1 - s}$.
\end{example}

From these examples we see that the representation zeta functions can be computed in a recursive manner using Proposition \ref{prop:representations_semidirect_product}. We will treat one more example, in order to illustrate that the stabilizers $H_i$ need not be constant along families of representatives. This suggests that one needs to consider parametrized families of algebraic groups $H$.

\begin{example}
    \label{ex:zeta_family_stabilizers}
    Consider $G = \GG_a^3 \rtimes H$ with $H = \left\{ \left( \begin{smallmatrix} 1 & 0 & a \\ 0 & 1 & b \\ 0 & 0 & 1 \end{smallmatrix} \right) \right\}$ acting naturally on $\GG_a^3$. Then $H$ acts on $X \cong \GG_a^3$ by
    \[ \begin{pmatrix} \alpha & \beta & \gamma \end{pmatrix} \begin{pmatrix} 1 & 0 & a \\ 0 & 1 & b \\ 0 & 0 & 1 \end{pmatrix} = \begin{pmatrix} \alpha & \beta & \gamma + a \alpha + b \beta \end{pmatrix} . \]
    Hence, the orbits in $X$ under $H$ are given by $\left\{ \begin{pmatrix} 0 & 0 & \gamma \end{pmatrix} \right\}$ for all $\gamma \in \FF_q$ and $\left\{ \begin{pmatrix} \alpha & \beta & \gamma \end{pmatrix} : \gamma \in \FF_q \right\}$ for all $\alpha \ne 0$ or $\beta \ne 0$.
    We choose the following representatives:
    \begin{itemize}
        \item $\chi_\gamma = \begin{pmatrix} 0 & 0 & \gamma \end{pmatrix}$ yields $H_\gamma = H$, contributing $q \cdot \zeta_H(s) = q^3$.
        \item $\chi_{\alpha, \beta} = \begin{pmatrix} \alpha & \beta & 0 \end{pmatrix}$ yields $H_{\alpha, \beta} = \left\{ \smatrix{1 & 0 & x \beta \\ 0 & 1 & -x \alpha \\ 0 & 0 & 1} : x \in \FF_q \right\} \cong \GG_a$, contributing
        \[ (q^2 - 1) \cdot \zeta_{\GG_a}(s) \cdot [H : H_{\alpha, \beta}]^{-s} = q^{1 - s} (q - 1)(q + 1) . \]
    \end{itemize}
    In total, we obtain $\zeta_G(s) = q^3 + q^{1 - s} (q - 1)(q + 1)$.
\end{example}


\subsection[Algorithmically computing ζ\_G(s)]{Algorithmically computing $\zeta_G(s)$}

In this subsection, we will describe an algorithm to compute $\zeta_G(s)$ for connected algebraic groups $G \subset \TT_n$, in the style of examples \ref{ex:zeta_U3}, \ref{ex:zeta_U4} and \ref{ex:zeta_family_stabilizers}
An implementation of this algorithm can be found at \cite{GitHubMathCode}, together with the code for computing $\zeta_{\UU_n}(s)$ and $\zeta_{\TT_n}(s)$ for $1 \le n \le 10$. The resulting zeta functions are given in Theorem \ref{thm:representation_zeta_functions_Tn} and Theorem \ref{thm:representation_zeta_functions_Un}.

Before discussing the algorithm, let us give some remarks.

The algorithm is divided into two parts. The main part, Algorithm \ref{alg:representation_zeta_functions}, finds a semidirect decomposition $G \cong N \rtimes H$ and applies Corollary \ref{cor:zeta_function_semidirect_product} in order to compute $\zeta_G(s)$. Finding representatives for the orbits $X/H$ is a more intricate step, and is described as a separate part, in Algorithm \ref{alg:orbit_representatives}.

As highlighted in Example \ref{ex:zeta_family_stabilizers}, it is possible for the stabilizers $H_i$ to vary along the families of representatives $\chi_i$. Therefore, in order for the algorithm to work recursively, we allow the input of the algorithm to be a family of algebraic groups $G \subset \TT_n$ parametrized by a variety $Y$ over $\FF_q$. We then understand the representation zeta function of $G$ to be
\[ \zeta_G(s) = \sum_{y \in Y(\FF_q)} \zeta_{G_y}(s) . \]
As we want the computations to hold over general a ground field $\FF_q$, we will practically work over $\ZZ$. Then $|Y(\FF_q)|$ can be computed as a polynomial in $q$ whenever $[Y] \in \K(\Var/\ZZ)$ can be computed as a polynomial in $q = [\AA^1_\ZZ]$ using Algorithm \ref{alg:virtual_classes}.

If any decision in the algorithm depends on whether some function $f$ on $Y$ is zero, we distinguish cases and continue working over the closed subvariety $Y' = Y \cap \{ f = 0 \}$ and its open complement $Y'' = Y \cap \{ f \ne 0 \}$.

\begin{algorithm}
    \label{alg:representation_zeta_functions}
    \textbf{Input}: A family of connected algebraic groups $G \subset \TT_n$, parametrized by a variety $Y$.
    
    \textbf{Output}: The representation zeta function $\zeta_G(s)$ as a polynomial in $q$, $q^{-s}$ and $(q - 1)^{-s}$.
    \begin{enumerate}
        \item If $n = 0$, then $G$ is trivial, so that $\zeta_G(s) = |Y(\FF_q)|$. Hence, we can assume $n \ge 1$.
        \item Since $G$ is connected, the image of the map $G \to \GG_m^n$ given by $A \mapsto (A_{ii})_i$ is isomorphic to $\GG_m^d$ for some $0 \le d \le n$. If $d = n$, then there is an isomorphism $G \cong \GG_m \times G'$ with $G' \subset \tilde{\TT}_n$ given by $A \mapsto (A_{nn}, A / A_{nn})$, so that $\zeta_G(s) = (q - 1) \zeta_{G'}(s)$. If $d < n$, then $G \cong G' \subset \tilde{\TT}_n$ via the map $A \mapsto A / A_{nn}$. Either way, we can assume $G \subset \tilde{\TT}_n$. 
        \item Write $G = N \rtimes H$ as discussed in Subsection \ref{subsec:arithmetic_upper_triangular}. The group $H$ can be obtained as the group of minors $H = \left\{ (A_{ij})_{i, j = 1}^{n - 1} : A \in G \right\}$, and $N$ can be obtained as the closed subgroup of $G$ given by $A_{ij} = 0$ for $1 \le i, j \le n - 1$ and $A_{ii} = 1$ for $1 \le i \le n - 1$.
        \item Identify $N \cong \GG_a^r$ for some $0 \le r \le n - 1$, and consider induced action of $H$ on the characters $\Hom(N, \CC^*) \cong \GG_a^r$.
        \item Use Algorithm \ref{alg:orbit_representatives} to find families of representatives $\chi_i$, parametrized by varieties $Z_i$, for the orbits $X/H$, together with their stabilizer $H_i$ and index $[H : H_i]$. Note that the stabilizers $H_i$ are generally parametrized by $X \times Z_i$.
        \item Repeat the algorithm to compute $\zeta_{H_i}(s)$ for all $i$, from which $\zeta_G(s)$ can be computed using Corollary \ref{cor:zeta_function_semidirect_product}.
    \end{enumerate}
\end{algorithm}

\begin{algorithm}
    \label{alg:orbit_representatives}
    \textbf{Input}: Let $H \subset \TT_n$ be an algebraic group, parametrized by a variety $X$, acting linearly on a subvariety $Z \subset \GG_a^r$ of the form $Z = \prod_{i = 1}^{r} Z_i$ with $Z_i \in \{ \{ 0 \}, \{ 1 \}, \GG_a \}$. Write $z_1, \ldots, z_r$ for the coordinates on $Z$. The variety $X$ is assumed to be a variety over $Z$.
    
    \textbf{Output}: Families of representatives $\chi_i$ together with stabilizers $H_i$, and the index $[H : H_i]$ as a polynomial in $q$.
    \begin{enumerate}
        \item Repeat steps $2$ and $3$ until every $z_i$ is invariant under $H$. When this is the case, return a single family of representatives given by $\chi = (z_1, \ldots, z_r)$, parametrized over $Z$, with stabilizer $H$ and index $[H : H] = 1$. If both step $2$ and $3$ do not apply, return failure.
        \item If $z_i \overset{H}{\mapsto} a z_i$ for some coordinate $a$ of $H$, then $a$ must be a diagonal entry of $H$. Distinguish between the following cases:
        \begin{enumerate}
            \item If $z_i = 0$, then continue with the action of $H$ restricted to $Z' = Z \cap \{ z_i = 0 \}$.
            \item If $z_i \ne 0$, then choose representatives with $z_i = 1$ using an appropriate choice of $a$. Continue with the action of $H' = H \cap \{ a = 1 \}$ restricted to $Z' = Z \cap \{ z_i = 1 \}$, remembering the index $[H : H'] = q - 1$.
        \end{enumerate}
        \item Write $z_i \overset{H}{\mapsto} \sum_{j} a_j f_j$, where $a_j$ are coordinates of $H$ and $f_j$ are functions on $X$ which are not identically zero. If some $f_\ell$ is invariant under the action of $H$, then distinguish between the following cases:
        \begin{enumerate}
            \item If $f_\ell = 0$, then continue with the restriction to the closed subvariety $X' = X \cap \{ f_\ell = 0 \}$.
            \item If $f_\ell \ne 0$, then choose representatives with $z_i = 0$ using an appropriate choice of $a_\ell$. Continue with the action of $H' = H \cap \left\{ a_\ell = - f_\ell^{-1} \sum_{j \ne \ell} a_j f_j \right\}$ restricted to $Z' = Z \cap \{ z_i = 0 \}$, remembering the index $[H : H'] = q$.
        \end{enumerate}
    \end{enumerate}
\end{algorithm}


\begin{remark}
    Note that some steps in this algorithm might fail. In fact, if this algorithm were to never fail, then this would show the representation zeta function $\zeta_G(s)$ is always a polynomial in $q$, $q^{-s}$ and $(q - 1)^{-s}$. Then, evaluating at $s = 0$ would imply that the number of conjugacy classes of $G$ is a polynomial in $q$. In particular, this would imply Higman's Conjecture \ref{conj:higman}. For us, the algorithm does not fail when applied to $G = \UU_n$ or $G = \TT_n$ for $1 \le n \le 10$.
\end{remark}

\subsection{Results}
\label{subsec:arithmetic_results}

The representation zeta functions of $\UU_n$ and $\TT_n$, as computed using Algorithm \ref{alg:representation_zeta_functions}, are given in Theorem \ref{thm:representation_zeta_functions_Un} and Theorem \ref{thm:representation_zeta_functions_Tn} below.
 
One can evaluate these zeta functions at $s = 0$ in order to obtain the number of conjugacy classes of the groups over finite fields $\FF_q$. The resulting polynomials in $q$ can be seen to agree with \cite[Appendix A]{PakSoffer2015}, where $t = q - 1$. In this sense, these zeta functions can be viewed as a generalization of the polynomials $k(\UU_n(\FF_q))$ as in \cite{PakSoffer2015}.

Furthermore, the $E$-polynomials of $R_G(\Sigma_g)$ can be obtained through Theorem \ref{thm:katz_theorem} and Corollary \ref{cor:point_count_from_zeta_function}. Indeed, one can verify that for $1 \le n \le 5$ these $E$-polynomials agree with the virtual classes as given by Theorem \ref{thm:virtual_class_Un_representation_varieties} and Theorem \ref{thm:virtual_class_T5_representation_variety}, via the map \eqref{eq:grothendieck_ring_to_Zuv}.

\begin{theorem}
    \label{thm:representation_zeta_functions_Un}
    The representation zeta functions $\zeta_{\UU_n}(s)$ for $1 \le n \le 10$ are given by
    \allowdisplaybreaks
    \footnotesize
    \begin{align*}
\zeta_{\UU_{1}}(s) &= 1 \\
\zeta_{\UU_{2}}(s) &= q \\
\zeta_{\UU_{3}}(s) &= q^{- s} \left(q - 1\right) + q^{2} \\
\zeta_{\UU_{4}}(s) &= q^{1 - s} \left(q - 1\right) \left(q + 1\right) + q^{1 - 2 s} \left(q - 1\right) + q^{3} \\
\zeta_{\UU_{5}}(s) &= q^{1 - 2 s} \left(q - 1\right) \left(q + 1\right) \left(2 q - 1\right) + q^{2 - s} \left(q - 1\right) \left(2 q + 1\right) + q^{1 - 3 s} \left(q - 1\right) \left(2 q - 1\right) \\ &\quad +q^{- 4 s} \left(q - 1\right)^{2} + q^{4} \\
\zeta_{\UU_{6}}(s) &= q^{2 - 2 s} \left(q - 1\right) \left(q + 2\right) \left(q^{2} + q - 1\right) + q^{2 - 3 s} \left(q - 1\right) \left(q + 1\right) \left(4 q - 3\right) \\ &\quad +q^{- 4 s} \left(q - 1\right) \left(2 q^{2} - 1\right) \left(q^{2} + q - 1\right) + q^{3 - s} \left(q - 1\right) \left(3 q + 1\right) + q^{1 - 5 s} \left(q - 1\right)^{2} \left(2 q + 1\right) \\ &\quad +q^{1 - 6 s} \left(q - 1\right)^{2} + q^{5} \\
\zeta_{\UU_{7}}(s) &= q^{3 - 2 s} \left(q - 1\right) \left(q + 1\right) \left(2 q^{2} + 3 q - 3\right) \\ &\quad +q^{1 - 4 s} \left(q - 1\right) \left(2 q - 1\right) \left(q^{4} + 5 q^{3} - 3 q - 1\right) + q^{4 - s} \left(q - 1\right) \left(4 q + 1\right) \\ &\quad +q^{2 - 3 s} \left(q - 1\right) \left(3 q^{4} + 6 q^{3} - 2 q^{2} - 5 q + 1\right) \\ &\quad +q^{1 - 5 s} \left(q - 1\right) \left(q^{5} + 7 q^{4} - 2 q^{3} - 9 q^{2} + 3 q + 1\right) + q^{1 - 6 s} \left(q - 1\right)^{2} \left(4 q^{3} + 7 q^{2} - 3 q - 1\right) \\ &\quad +q^{1 - 8 s} \left(q - 1\right)^{2} \left(3 q - 2\right) + q^{- 7 s} \left(q - 1\right)^{2} \left(5 q^{3} - 3 q + 1\right) + q^{- 9 s} \left(q - 1\right)^{3} + q^{6} \\
\zeta_{\UU_{8}}(s) &= q^{4 - 2 s} \left(q - 1\right) \left(3 q + 2\right) \left(q^{2} + 2 q - 2\right) + q^{5 - s} \left(q - 1\right) \left(5 q + 1\right) \\ &\quad +q^{3 - 3 s} \left(q - 1\right) \left(q^{5} + 5 q^{4} + 10 q^{3} - 7 q^{2} - 8 q + 3\right) \\ &\quad +q^{3 - 6 s} \left(q - 1\right) \left(q^{5} + 7 q^{4} + 16 q^{3} - 24 q^{2} - 14 q + 15\right) \\ &\quad +q^{2 - 4 s} \left(q - 1\right) \left(12 q^{5} + 9 q^{4} - 16 q^{3} - 9 q^{2} + 6 q + 1\right) \\ &\quad +q^{1 - 5 s} \left(q - 1\right) \left(2 q^{7} + 8 q^{6} + 13 q^{5} - 23 q^{4} - 9 q^{3} + 12 q^{2} - 1\right) \\ &\quad +q^{1 - 7 s} \left(q - 1\right)^{2} \left(6 q^{5} + 18 q^{4} + 4 q^{3} - 19 q^{2} + q + 3\right) \\ &\quad +q^{1 - 8 s} \left(q - 1\right)^{2} \left(q^{5} + 13 q^{4} + 8 q^{3} - 14 q^{2} - 4 q + 3\right) + q^{1 - 11 s} \left(q - 1\right)^{3} \left(3 q + 1\right) \\ &\quad +q^{- 9 s} \left(q - 1\right)^{2} \left(4 q^{5} + 10 q^{4} - 7 q^{3} - 8 q^{2} + 3 q + 1\right) + q^{- 10 s} \left(q - 1\right)^{2} \left(5 q^{4} + q^{3} - 6 q^{2} + 1\right) \\ &\quad +q^{1 - 12 s} \left(q - 1\right)^{3} + q^{7} \\
\zeta_{\UU_{9}}(s) &= q^{5 - 2 s} \left(q - 1\right) \left(2 q + 1\right) \left(2 q^{2} + 5 q - 5\right) + q^{6 - s} \left(q - 1\right) \left(6 q + 1\right) \\ &\quad +q^{4 - 3 s} \left(q - 1\right) \left(2 q^{5} + 9 q^{4} + 14 q^{3} - 15 q^{2} - 11 q + 6\right) \\ &\quad +q^{4 - 4 s} \left(q - 1\right) \left(4 q^{5} + 19 q^{4} + 11 q^{3} - 34 q^{2} - 10 q + 14\right) \\ &\quad +q^{2 - 5 s} \left(q - 1\right) \left(q^{8} + 5 q^{7} + 29 q^{6} + q^{5} - 53 q^{4} - 2 q^{3} + 27 q^{2} - 3 q - 2\right) \\ &\quad +q^{2 - 6 s} \left(q - 1\right) \left(10 q^{7} + 33 q^{6} - 9 q^{5} - 68 q^{4} + 10 q^{3} + 38 q^{2} - 11 q - 1\right) \\ &\quad +q^{1 - 7 s} \left(q - 1\right) \left(2 q^{9} + 8 q^{8} + 27 q^{7} + 2 q^{6} - 87 q^{5} + 20 q^{4} + 46 q^{3} - 15 q^{2} - 3 q + 1\right) \\ &\quad +q^{1 - 8 s} \left(q - 1\right)^{2} \left(9 q^{7} + 33 q^{6} + 40 q^{5} - 45 q^{4} - 40 q^{3} + 21 q^{2} + 5 q - 1\right) \\ &\quad +q^{1 - 9 s} \left(q - 1\right)^{2} \left(2 q^{7} + 30 q^{6} + 42 q^{5} - 44 q^{4} - 48 q^{3} + 25 q^{2} + 7 q - 1\right) \\ &\quad +q^{1 - 11 s} \left(q - 1\right)^{2} \left(4 q^{6} + 25 q^{5} + 5 q^{4} - 48 q^{3} + 7 q^{2} + 9 q + 1\right) \\ &\quad +q^{1 - 12 s} \left(q - 1\right)^{2} \left(10 q^{5} + 18 q^{4} - 32 q^{3} - 10 q^{2} + 18 q - 3\right) + q^{1 - 15 s} \left(q - 1\right)^{3} \left(4 q - 3\right) \\ &\quad +q^{- 10 s} \left(q - 1\right)^{2} \left(2 q^{8} + 13 q^{7} + 38 q^{6} - 24 q^{5} - 49 q^{4} + 20 q^{3} + 11 q^{2} - 3 q - 1\right) \\ &\quad +q^{- 13 s} \left(q - 1\right)^{3} \left(12 q^{4} + 10 q^{3} - 13 q^{2} + q + 1\right) + q^{- 14 s} \left(q - 1\right)^{3} \left(9 q^{3} - 2 q^{2} - 5 q + 2\right) \\ &\quad +q^{- 16 s} \left(q - 1\right)^{4} + q^{8} \\
\zeta_{\UU_{10}}(s) &= q^{6 - 2 s} \left(q - 1\right) \left(5 q + 2\right) \left(q^{2} + 3 q - 3\right) + q^{7 - s} \left(q - 1\right) \left(7 q + 1\right) \\ &\quad +q^{5 - 3 s} \left(q - 1\right) \left(3 q^{5} + 15 q^{4} + 19 q^{3} - 28 q^{2} - 13 q + 10\right) \\ &\quad +q^{4 - 4 s} \left(q - 1\right) \left(q^{7} + 7 q^{6} + 32 q^{5} + 12 q^{4} - 65 q^{3} - 6 q^{2} + 27 q - 3\right) \\ &\quad +q^{3 - 5 s} \left(q - 1\right) \left(2 q^{8} + 21 q^{7} + 42 q^{6} - 16 q^{5} - 103 q^{4} + 24 q^{3} + 50 q^{2} - 13 q - 3\right) \\ &\quad +q^{2 - 6 s} \left(q - 1\right) \left(6 q^{9} + 27 q^{8} + 64 q^{7} - 73 q^{6} - 118 q^{5} + 64 q^{4} + 70 q^{3} - 39 q^{2} + q + 1\right) \\ &\quad +q^{2 - 7 s} \left(q - 1\right) \left(2 q^{10} + 5 q^{9} + 39 q^{8} + 74 q^{7} - 130 q^{6} - 133 q^{5} + 128 q^{4} + 74 q^{3} - 60 q^{2} + 2 q + 1\right) \\ &\quad +q^{2 - 8 s} \left(q - 1\right) \left(q^{10} + 12 q^{9} + 39 q^{8} + 67 q^{7} - 137 q^{6} - 172 q^{5} + 200 q^{4} + 63 q^{3} - 80 q^{2} + 2 q + 6\right) \\ &\quad +q^{2 - 9 s} \left(q - 1\right)^{2} \left(10 q^{8} + 65 q^{7} + 117 q^{6} - 36 q^{5} - 221 q^{4} + 18 q^{3} + 98 q^{2} - 11 q - 6\right) \\ &\quad +q^{1 - 11 s} \left(q - 1\right)^{2} \left(6 q^{9} + 31 q^{8} + 109 q^{7} + 8 q^{6} - 240 q^{5} - 10 q^{4} + 135 q^{3} - 17 q^{2} - 8 q - 1\right) \\ &\quad +q^{1 - 12 s} \left(q - 1\right)^{2} \left(2 q^{9} + 22 q^{8} + 77 q^{7} + 46 q^{6} - 217 q^{5} - 48 q^{4} + 156 q^{3} - 12 q^{2} - 20 q + 1\right) \\ &\quad +q^{1 - 13 s} \left(q - 1\right)^{2} \left(10 q^{8} + 50 q^{7} + 60 q^{6} - 138 q^{5} - 110 q^{4} + 146 q^{3} + 8 q^{2} - 25 q + 2\right) \\ &\quad +q^{1 - 15 s} \left(q - 1\right)^{3} \left(4 q^{6} + 42 q^{5} + 46 q^{4} - 51 q^{3} - 44 q^{2} + 23 q + 5\right) + q^{1 - 19 s} \left(q - 1\right)^{4} \left(4 q + 1\right) \\ &\quad +q^{- 10 s} \left(q - 1\right)^{2} \left(2 q^{11} + 8 q^{10} + 50 q^{9} + 112 q^{8} - 29 q^{7} - 227 q^{6} + 17 q^{5} + 123 q^{4} - 24 q^{3} - 12 q^{2} + q + 1\right) \\ &\quad +q^{- 14 s} \left(q - 1\right)^{2} \left(2 q^{9} + 24 q^{8} + 53 q^{7} - 52 q^{6} - 127 q^{5} + 84 q^{4} + 49 q^{3} - 32 q^{2} - 3 q + 3\right) \\ &\quad +q^{- 16 s} \left(q - 1\right)^{3} \left(10 q^{6} + 37 q^{5} - 9 q^{4} - 42 q^{3} + 6 q^{2} + 10 q - 1\right) \\ &\quad +q^{- 17 s} \left(q - 1\right)^{3} \left(12 q^{5} + 14 q^{4} - 21 q^{3} - 8 q^{2} + 6 q + 1\right) \\ &\quad +q^{- 18 s} \left(q - 1\right)^{3} \left(9 q^{4} - q^{3} - 9 q^{2} + q + 1\right) + q^{1 - 20 s} \left(q - 1\right)^{4} + q^{9} . \tag*{\qed}
    \end{align*}
\end{theorem}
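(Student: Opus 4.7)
The plan is to prove this theorem by direct application of Algorithm \ref{alg:representation_zeta_functions} to $G = \UU_n$ for each $n$ in the stated range, arguing by induction on $n$. The base case $\zeta_{\UU_1}(s) = 1$ is trivial. For the inductive step one invokes the split exact sequence from Subsection \ref{subsec:arithmetic_upper_triangular}, giving $\UU_n = N \rtimes \UU_{n-1}$ with $N \cong \GG_a^{n-1}$ abelian. Identifying the dual $X = \Hom(N, \CC^*)$ with $\GG_a^{n-1}$ on which $\UU_{n-1}$ acts by right multiplication, Algorithm \ref{alg:orbit_representatives} produces a finite list of families of representatives $\chi_i$ with stabilizers $H_i \subset \UU_{n-1}$ and indices $[H : H_i]$ polynomial in $q$. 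Applying Corollary \ref{cor:zeta_function_semidirect_product} then expresses $\zeta_{\UU_n}(s)$ in terms of the $\zeta_{H_i}(s)$, and these in turn fall under a recursive call to Algorithm \ref{alg:representation_zeta_functions}.

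The main obstacle is that the stabilizers $H_i$ are generally not of the form $\UU_m$, but rather \emph{parametrized families} of connected subgroups of $\TT_{n-1}$. Consequently the induction must be carried out on the broader class of such families, with parameter varieties $Y$ tracked throughout and their point counts $|Y(\FF_q)|$ required to be polynomial in $q$; this reduces via Algorithm \ref{alg:virtual_classes} to a motivic computation in $\K(\Var/\ZZ)$ that itself is not guaranteed to succeed. A second source of difficulty is that at step 3 of Algorithm \ref{alg:orbit_representatives} one must exhibit an invariant coordinate function $f_\ell$ to split the orbit analysis; if none exists, the algorithm fails and no closed form is produced. Part of the work is therefore to verify, case by case for $2 \le n \le 10$, that neither Algorithm \ref{alg:virtual_classes} nor Algorithm \ref{alg:orbit_representatives} fails on any of the intermediate families arising from $\UU_n$. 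The combinatorial explosion of orbit families with $n$ makes the computation infeasible by hand; the implementation at \cite{GitHubMathCode} performs the bookkeeping and produces the expressions listed in the theorem.

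Finally, as correctness checks independent of the algorithm, one verifies that evaluating at $s = 0$ recovers the number $k(\UU_n(\FF_q))$ of conjugacy classes of $\UU_n(\FF_q)$, which after substituting $t = q - 1$ agrees with the polynomials computed in \cite[Appendix A]{PakSoffer2015} for all $1 \le n \le 10$. In addition, for $1 \le n \le 5$, combining Theorem \ref{thm:katz_theorem} with Corollary \ref{cor:point_count_from_zeta_function} yields the $E$-polynomial of $R_{\UU_n}(\Sigma_g)$, and one checks that under the map \eqref{eq:grothendieck_ring_to_Zuv} this coincides with the virtual class obtained from the TQFT computation in Theorem \ref{thm:virtual_class_Un_representation_varieties}. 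This cross-verification between two entirely different methods provides strong independent evidence for the correctness of the claimed zeta functions.
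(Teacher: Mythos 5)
Your proposal matches the paper's own argument: the theorem is established by running Algorithm \ref{alg:representation_zeta_functions} (with Algorithm \ref{alg:orbit_representatives} for the orbit representatives) recursively on the semidirect decompositions $\UU_n = N \rtimes \UU_{n-1}$, allowing parametrized families of stabilizers, relying on the implementation at \cite{GitHubMathCode}, and noting that the algorithms happen not to fail for $n \le 10$. The cross-checks you cite --- agreement with $k(\UU_n(\FF_q))$ from \cite{PakSoffer2015} at $s = 0$ and with the TQFT virtual classes for $n \le 5$ via Katz' theorem --- are exactly the verifications the paper offers.
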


\begin{theorem}
    \label{thm:representation_zeta_functions_Tn}
    The representation zeta functions $\zeta_{\TT_n}(s)$ for $1 \le n \le 10$ are given by
    \allowdisplaybreaks
    \footnotesize
    \begin{align*}
\zeta_{\TT_{1}}(s) &= q - 1 \\
\zeta_{\TT_{2}}(s) &= \left(q - 1\right)^{1 - s} + \left(q - 1\right)^{2} \\
\zeta_{\TT_{3}}(s) &= q^{- s} \left(q - 1\right)^{2 - s} + 2 \left(q - 1\right)^{2 - s} + \left(q - 1\right)^{1 - 2 s} + \left(q - 1\right)^{3} \\
\zeta_{\TT_{4}}(s) &= 3 q^{- s} \left(q - 1\right)^{2 - 2 s} + 2 q^{- s} \left(q - 1\right)^{3 - s} + q^{- 2 s} \left(q - 1\right)^{3 - s} + q^{- 2 s} \left(q - 1\right)^{2 - 2 s} + q^{- s} \left(q - 1\right)^{1 - 3 s} \\ &\quad +3 \left(q - 1\right)^{3 - s} + 3 \left(q - 1\right)^{2 - 2 s} + \left(q - 1\right)^{1 - 3 s} + \left(q - 1\right)^{4} \\
\zeta_{\TT_{5}}(s) &= 8 q^{- s} \left(q - 1\right)^{3 - 2 s} + 7 q^{- 2 s} \left(q - 1\right)^{3 - 2 s} + 7 q^{- 2 s} \left(q - 1\right)^{2 - 3 s} + 7 q^{- s} \left(q - 1\right)^{2 - 3 s} \\ &\quad +3 q^{- 3 s} \left(q - 1\right)^{3 - 2 s} + 3 q^{- s} \left(q - 1\right)^{4 - s} + 2 q^{- 2 s} \left(q - 1\right)^{4 - s} + 2 q^{- 2 s} \left(q - 1\right)^{1 - 4 s} \\ &\quad +2 q^{- 3 s} \left(q - 1\right)^{2 - 3 s} + 2 q^{- s} \left(q - 1\right)^{1 - 4 s} + q^{- 3 s} \left(q - 1\right)^{4 - s} + q^{- 4 s} \left(q - 1\right)^{3 - 2 s} + 6 \left(q - 1\right)^{3 - 2 s} \\ &\quad +4 \left(q - 1\right)^{4 - s} + 4 \left(q - 1\right)^{2 - 3 s} + \left(q - 1\right)^{1 - 4 s} + \left(q - 1\right)^{5} \\
\zeta_{\TT_{6}}(s) &= q^{- 2 s} \left(q - 1\right)^{1 - 5 s} \left(q + 7\right) + 29 q^{- 2 s} \left(q - 1\right)^{3 - 3 s} + 24 q^{- 3 s} \left(q - 1\right)^{3 - 3 s} + 23 q^{- 2 s} \left(q - 1\right)^{2 - 4 s} \\ &\quad +21 q^{- s} \left(q - 1\right)^{3 - 3 s} + 17 q^{- 3 s} \left(q - 1\right)^{2 - 4 s} + 16 q^{- 2 s} \left(q - 1\right)^{4 - 2 s} + 15 q^{- 4 s} \left(q - 1\right)^{3 - 3 s} \\ &\quad +15 q^{- s} \left(q - 1\right)^{4 - 2 s} + 13 q^{- 3 s} \left(q - 1\right)^{4 - 2 s} + 13 q^{- s} \left(q - 1\right)^{2 - 4 s} + 10 q^{- 4 s} \left(q - 1\right)^{2 - 4 s} \\ &\quad +7 q^{- 4 s} \left(q - 1\right)^{4 - 2 s} + 5 q^{- 5 s} \left(q - 1\right)^{3 - 3 s} + 4 q^{- 3 s} \left(q - 1\right)^{1 - 5 s} + 4 q^{- s} \left(q - 1\right)^{5 - s} \\ &\quad +3 q^{- 2 s} \left(q - 1\right)^{5 - s} + 3 q^{- 5 s} \left(q - 1\right)^{4 - 2 s} + 3 q^{- s} \left(q - 1\right)^{1 - 5 s} + 2 q^{- 3 s} \left(q - 1\right)^{5 - s} \\ &\quad +2 q^{- 4 s} \left(q - 1\right)^{1 - 5 s} + 2 q^{- 5 s} \left(q - 1\right)^{2 - 4 s} + q^{- 4 s} \left(q - 1\right)^{5 - s} + q^{- 6 s} \left(q - 1\right)^{4 - 2 s} \\ &\quad +q^{- 6 s} \left(q - 1\right)^{3 - 3 s} + 10 \left(q - 1\right)^{4 - 2 s} + 10 \left(q - 1\right)^{3 - 3 s} + 5 \left(q - 1\right)^{5 - s} + 5 \left(q - 1\right)^{2 - 4 s} + \left(q - 1\right)^{1 - 5 s} \\ &\quad +\left(q - 1\right)^{6} \\
\zeta_{\TT_{7}}(s) &= 2 q^{- 4 s} \left(q - 1\right)^{1 - 6 s} \left(q + 8\right) + q^{- 2 s} \left(q - 1\right)^{2 - 5 s} \left(2 q + 53\right) + q^{- 2 s} \left(q - 1\right)^{1 - 6 s} \left(2 q + 13\right) \\ &\quad +q^{- 3 s} \left(q - 1\right)^{2 - 5 s} \left(2 q + 71\right) + q^{- 3 s} \left(q - 1\right)^{1 - 6 s} \left(3 q + 19\right) + q^{- 4 s} \left(q - 1\right)^{2 - 5 s} \left(3 q + 67\right) \\ &\quad +q^{- 5 s} \left(q - 1\right)^{1 - 6 s} \left(q + 12\right) + 107 q^{- 3 s} \left(q - 1\right)^{3 - 4 s} + 104 q^{- 4 s} \left(q - 1\right)^{3 - 4 s} + 87 q^{- 2 s} \left(q - 1\right)^{3 - 4 s} \\ &\quad +79 q^{- 3 s} \left(q - 1\right)^{4 - 3 s} + 73 q^{- 4 s} \left(q - 1\right)^{4 - 3 s} + 73 q^{- 5 s} \left(q - 1\right)^{3 - 4 s} + 71 q^{- 2 s} \left(q - 1\right)^{4 - 3 s} \\ &\quad +49 q^{- 5 s} \left(q - 1\right)^{4 - 3 s} + 48 q^{- 5 s} \left(q - 1\right)^{2 - 5 s} + 46 q^{- s} \left(q - 1\right)^{4 - 3 s} + 44 q^{- s} \left(q - 1\right)^{3 - 4 s} \\ &\quad +42 q^{- 6 s} \left(q - 1\right)^{3 - 4 s} + 30 q^{- 6 s} \left(q - 1\right)^{4 - 3 s} + 28 q^{- 2 s} \left(q - 1\right)^{5 - 2 s} + 27 q^{- 3 s} \left(q - 1\right)^{5 - 2 s} \\ &\quad +24 q^{- s} \left(q - 1\right)^{5 - 2 s} + 23 q^{- 6 s} \left(q - 1\right)^{2 - 5 s} + 22 q^{- 4 s} \left(q - 1\right)^{5 - 2 s} + 21 q^{- s} \left(q - 1\right)^{2 - 5 s} \\ &\quad +15 q^{- 7 s} \left(q - 1\right)^{3 - 4 s} + 13 q^{- 5 s} \left(q - 1\right)^{5 - 2 s} + 12 q^{- 7 s} \left(q - 1\right)^{4 - 3 s} + 7 q^{- 6 s} \left(q - 1\right)^{5 - 2 s} \\ &\quad +5 q^{- 7 s} \left(q - 1\right)^{2 - 5 s} + 5 q^{- s} \left(q - 1\right)^{6 - s} + 4 q^{- 2 s} \left(q - 1\right)^{6 - s} + 4 q^{- 6 s} \left(q - 1\right)^{1 - 6 s} \\ &\quad +4 q^{- 8 s} \left(q - 1\right)^{4 - 3 s} + 4 q^{- s} \left(q - 1\right)^{1 - 6 s} + 3 q^{- 3 s} \left(q - 1\right)^{6 - s} + 3 q^{- 7 s} \left(q - 1\right)^{5 - 2 s} \\ &\quad +3 q^{- 8 s} \left(q - 1\right)^{3 - 4 s} + 2 q^{- 4 s} \left(q - 1\right)^{6 - s} + q^{- 5 s} \left(q - 1\right)^{6 - s} + q^{- 8 s} \left(q - 1\right)^{5 - 2 s} + q^{- 9 s} \left(q - 1\right)^{4 - 3 s} \\ &\quad +20 \left(q - 1\right)^{4 - 3 s} + 15 \left(q - 1\right)^{5 - 2 s} + 15 \left(q - 1\right)^{3 - 4 s} + 6 \left(q - 1\right)^{6 - s} + 6 \left(q - 1\right)^{2 - 5 s} + \left(q - 1\right)^{1 - 6 s} \\ &\quad +\left(q - 1\right)^{7} \\
\zeta_{\TT_{8}}(s) &= 14 q^{- 3 s} \left(q - 1\right)^{2 - 6 s} \left(q + 14\right) + 6 q^{- 7 s} \left(q - 1\right)^{1 - 7 s} \left(q + 7\right) + 4 q^{- 3 s} \left(q - 1\right)^{3 - 5 s} \left(q + 87\right) \\ &\quad +2 q^{- 2 s} \left(q - 1\right)^{2 - 6 s} \left(3 q + 50\right) + 2 q^{- 7 s} \left(q - 1\right)^{2 - 6 s} \left(3 q + 89\right) + q^{- 2 s} \left(q - 1\right)^{3 - 5 s} \left(3 q + 208\right) \\ &\quad +q^{- 2 s} \left(q - 1\right)^{1 - 7 s} \left(3 q + 20\right) + q^{- 3 s} \left(q - 1\right)^{1 - 7 s} \left(q^{2} + 11 q + 47\right) \\ &\quad +q^{- 4 s} \left(q - 1\right)^{3 - 5 s} \left(9 q + 457\right) + q^{- 4 s} \left(q - 1\right)^{2 - 6 s} \left(20 q + 261\right) \\ &\quad +q^{- 4 s} \left(q - 1\right)^{1 - 7 s} \left(12 q + 61\right) + q^{- 5 s} \left(q - 1\right)^{3 - 5 s} \left(6 q + 485\right) \\ &\quad +q^{- 5 s} \left(q - 1\right)^{2 - 6 s} \left(24 q + 305\right) + q^{- 5 s} \left(q - 1\right)^{1 - 7 s} \left(2 q^{2} + 20 q + 79\right) \\ &\quad +q^{- 6 s} \left(q - 1\right)^{3 - 5 s} \left(6 q + 415\right) + q^{- 6 s} \left(q - 1\right)^{2 - 6 s} \left(19 q + 250\right) \\ &\quad +q^{- 6 s} \left(q - 1\right)^{1 - 7 s} \left(q^{2} + 13 q + 60\right) + q^{- 8 s} \left(q - 1\right)^{2 - 6 s} \left(3 q + 85\right) + q^{- 8 s} \left(q - 1\right)^{1 - 7 s} \left(q + 15\right) \\ &\quad +410 q^{- 4 s} \left(q - 1\right)^{4 - 4 s} + 398 q^{- 5 s} \left(q - 1\right)^{4 - 4 s} + 340 q^{- 6 s} \left(q - 1\right)^{4 - 4 s} + 332 q^{- 3 s} \left(q - 1\right)^{4 - 4 s} \\ &\quad +297 q^{- 7 s} \left(q - 1\right)^{3 - 5 s} + 238 q^{- 7 s} \left(q - 1\right)^{4 - 4 s} + 229 q^{- 2 s} \left(q - 1\right)^{4 - 4 s} + 192 q^{- 4 s} \left(q - 1\right)^{5 - 3 s} \\ &\quad +174 q^{- 3 s} \left(q - 1\right)^{5 - 3 s} + 171 q^{- 5 s} \left(q - 1\right)^{5 - 3 s} + 168 q^{- 8 s} \left(q - 1\right)^{3 - 5 s} + 147 q^{- 8 s} \left(q - 1\right)^{4 - 4 s} \\ &\quad +139 q^{- 2 s} \left(q - 1\right)^{5 - 3 s} + 136 q^{- 6 s} \left(q - 1\right)^{5 - 3 s} + 110 q^{- s} \left(q - 1\right)^{4 - 4 s} + 90 q^{- 7 s} \left(q - 1\right)^{5 - 3 s} \\ &\quad +85 q^{- s} \left(q - 1\right)^{5 - 3 s} + 80 q^{- s} \left(q - 1\right)^{3 - 5 s} + 73 q^{- 9 s} \left(q - 1\right)^{3 - 5 s} + 71 q^{- 9 s} \left(q - 1\right)^{4 - 4 s} \\ &\quad +56 q^{- 8 s} \left(q - 1\right)^{5 - 3 s} + 45 q^{- 3 s} \left(q - 1\right)^{6 - 2 s} + 43 q^{- 2 s} \left(q - 1\right)^{6 - 2 s} + 42 q^{- 4 s} \left(q - 1\right)^{6 - 2 s} \\ &\quad +35 q^{- s} \left(q - 1\right)^{6 - 2 s} + 34 q^{- 5 s} \left(q - 1\right)^{6 - 2 s} + 31 q^{- s} \left(q - 1\right)^{2 - 6 s} + 30 q^{- 9 s} \left(q - 1\right)^{2 - 6 s} \\ &\quad +27 q^{- 10 s} \left(q - 1\right)^{4 - 4 s} + 26 q^{- 9 s} \left(q - 1\right)^{5 - 3 s} + 22 q^{- 6 s} \left(q - 1\right)^{6 - 2 s} + 21 q^{- 10 s} \left(q - 1\right)^{3 - 5 s} \\ &\quad +13 q^{- 7 s} \left(q - 1\right)^{6 - 2 s} + 11 q^{- 10 s} \left(q - 1\right)^{5 - 3 s} + 7 q^{- 8 s} \left(q - 1\right)^{6 - 2 s} + 7 q^{- 11 s} \left(q - 1\right)^{4 - 4 s} \\ &\quad +6 q^{- s} \left(q - 1\right)^{7 - s} + 5 q^{- 2 s} \left(q - 1\right)^{7 - s} + 5 q^{- 10 s} \left(q - 1\right)^{2 - 6 s} + 5 q^{- s} \left(q - 1\right)^{1 - 7 s} \\ &\quad +4 q^{- 3 s} \left(q - 1\right)^{7 - s} + 4 q^{- 9 s} \left(q - 1\right)^{1 - 7 s} + 4 q^{- 11 s} \left(q - 1\right)^{5 - 3 s} + 3 q^{- 4 s} \left(q - 1\right)^{7 - s} \\ &\quad +3 q^{- 9 s} \left(q - 1\right)^{6 - 2 s} + 3 q^{- 11 s} \left(q - 1\right)^{3 - 5 s} + 2 q^{- 5 s} \left(q - 1\right)^{7 - s} + q^{- 6 s} \left(q - 1\right)^{7 - s} \\ &\quad +q^{- 10 s} \left(q - 1\right)^{6 - 2 s} + q^{- 12 s} \left(q - 1\right)^{5 - 3 s} + q^{- 12 s} \left(q - 1\right)^{4 - 4 s} + 35 \left(q - 1\right)^{5 - 3 s} + 35 \left(q - 1\right)^{4 - 4 s} \\ &\quad +21 \left(q - 1\right)^{6 - 2 s} + 21 \left(q - 1\right)^{3 - 5 s} + 7 \left(q - 1\right)^{7 - s} + 7 \left(q - 1\right)^{2 - 6 s} + \left(q - 1\right)^{1 - 7 s} + \left(q - 1\right)^{8} \\
\zeta_{\TT_{9}}(s) &= 10 q^{- 8 s} \left(q - 1\right)^{4 - 5 s} \left(q + 186\right) + 10 q^{- 8 s} \left(q - 1\right)^{3 - 6 s} \left(7 q + 204\right) \\ &\quad +6 q^{- 3 s} \left(q - 1\right)^{4 - 5 s} \left(q + 182\right) + 6 q^{- 11 s} \left(q - 1\right)^{3 - 6 s} \left(q + 81\right) + 4 q^{- 2 s} \left(q - 1\right)^{4 - 5 s} \left(q + 149\right) \\ &\quad +4 q^{- 2 s} \left(q - 1\right)^{1 - 8 s} \left(q + 7\right) + 4 q^{- 6 s} \left(q - 1\right)^{3 - 6 s} \left(27 q + 598\right) \\ &\quad +4 q^{- 9 s} \left(q - 1\right)^{3 - 6 s} \left(11 q + 377\right) + 3 q^{- 7 s} \left(q - 1\right)^{4 - 5 s} \left(4 q + 739\right) \\ &\quad +2 q^{- 3 s} \left(q - 1\right)^{2 - 7 s} \left(q^{2} + 23 q + 212\right) + 2 q^{- 4 s} \left(q - 1\right)^{3 - 6 s} \left(31 q + 754\right) \\ &\quad +2 q^{- 7 s} \left(q - 1\right)^{2 - 7 s} \left(4 q^{2} + 103 q + 730\right) + 2 q^{- 8 s} \left(q - 1\right)^{2 - 7 s} \left(3 q^{2} + 72 q + 586\right) \\ &\quad +2 q^{- 10 s} \left(q - 1\right)^{3 - 6 s} \left(7 q + 491\right) + 2 q^{- 11 s} \left(q - 1\right)^{1 - 8 s} \left(2 q + 19\right) \\ &\quad +2 q^{- 12 s} \left(q - 1\right)^{2 - 7 s} \left(q + 38\right) + q^{- 2 s} \left(q - 1\right)^{3 - 6 s} \left(12 q + 425\right) \\ &\quad +q^{- 2 s} \left(q - 1\right)^{2 - 7 s} \left(12 q + 167\right) + q^{- 3 s} \left(q - 1\right)^{3 - 6 s} \left(31 q + 912\right) \\ &\quad +q^{- 3 s} \left(q - 1\right)^{1 - 8 s} \left(2 q^{2} + 21 q + 85\right) + q^{- 4 s} \left(q - 1\right)^{4 - 5 s} \left(15 q + 1658\right) \\ &\quad +q^{- 4 s} \left(q - 1\right)^{2 - 7 s} \left(2 q^{2} + 89 q + 758\right) + q^{- 4 s} \left(q - 1\right)^{1 - 8 s} \left(4 q^{2} + 45 q + 165\right) \\ &\quad +q^{- 5 s} \left(q - 1\right)^{4 - 5 s} \left(16 q + 2111\right) + q^{- 5 s} \left(q - 1\right)^{3 - 6 s} \left(92 q + 2099\right) \\ &\quad +q^{- 5 s} \left(q - 1\right)^{2 - 7 s} \left(9 q^{2} + 157 q + 1138\right) \\ &\quad +q^{- 5 s} \left(q - 1\right)^{1 - 8 s} \left(q^{3} + 12 q^{2} + 83 q + 262\right) + q^{- 6 s} \left(q - 1\right)^{4 - 5 s} \left(21 q + 2302\right) \\ &\quad +q^{- 6 s} \left(q - 1\right)^{2 - 7 s} \left(6 q^{2} + 180 q + 1339\right) + q^{- 6 s} \left(q - 1\right)^{1 - 8 s} \left(10 q^{2} + 97 q + 316\right) \\ &\quad +q^{- 7 s} \left(q - 1\right)^{3 - 6 s} \left(101 q + 2451\right) \\ &\quad +q^{- 7 s} \left(q - 1\right)^{1 - 8 s} \left(2 q^{3} + 22 q^{2} + 131 q + 369\right) + q^{- 8 s} \left(q - 1\right)^{1 - 8 s} \left(9 q^{2} + 81 q + 277\right) \\ &\quad +q^{- 9 s} \left(q - 1\right)^{2 - 7 s} \left(3 q^{2} + 80 q + 823\right) + q^{- 9 s} \left(q - 1\right)^{1 - 8 s} \left(2 q^{2} + 39 q + 181\right) \\ &\quad +q^{- 10 s} \left(q - 1\right)^{2 - 7 s} \left(39 q + 536\right) + q^{- 10 s} \left(q - 1\right)^{1 - 8 s} \left(2 q^{2} + 25 q + 119\right) \\ &\quad +q^{- 11 s} \left(q - 1\right)^{2 - 7 s} \left(11 q + 222\right) + 1395 q^{- 9 s} \left(q - 1\right)^{4 - 5 s} + 1254 q^{- 6 s} \left(q - 1\right)^{5 - 4 s} \\ &\quad +1224 q^{- 5 s} \left(q - 1\right)^{5 - 4 s} + 1123 q^{- 7 s} \left(q - 1\right)^{5 - 4 s} + 1061 q^{- 4 s} \left(q - 1\right)^{5 - 4 s} \\ &\quad +923 q^{- 8 s} \left(q - 1\right)^{5 - 4 s} + 911 q^{- 10 s} \left(q - 1\right)^{4 - 5 s} + 776 q^{- 3 s} \left(q - 1\right)^{5 - 4 s} + 670 q^{- 9 s} \left(q - 1\right)^{5 - 4 s} \\ &\quad +505 q^{- 11 s} \left(q - 1\right)^{4 - 5 s} + 494 q^{- 2 s} \left(q - 1\right)^{5 - 4 s} + 436 q^{- 10 s} \left(q - 1\right)^{5 - 4 s} + 394 q^{- 5 s} \left(q - 1\right)^{6 - 3 s} \\ &\quad +381 q^{- 4 s} \left(q - 1\right)^{6 - 3 s} + 369 q^{- 6 s} \left(q - 1\right)^{6 - 3 s} + 319 q^{- 3 s} \left(q - 1\right)^{6 - 3 s} + 299 q^{- 7 s} \left(q - 1\right)^{6 - 3 s} \\ &\quad +251 q^{- 11 s} \left(q - 1\right)^{5 - 4 s} + 242 q^{- 12 s} \left(q - 1\right)^{4 - 5 s} + 239 q^{- 2 s} \left(q - 1\right)^{6 - 3 s} + 230 q^{- 8 s} \left(q - 1\right)^{6 - 3 s} \\ &\quad +230 q^{- s} \left(q - 1\right)^{5 - 4 s} + 225 q^{- s} \left(q - 1\right)^{4 - 5 s} + 208 q^{- 12 s} \left(q - 1\right)^{3 - 6 s} + 154 q^{- 9 s} \left(q - 1\right)^{6 - 3 s} \\ &\quad +141 q^{- s} \left(q - 1\right)^{6 - 3 s} + 132 q^{- s} \left(q - 1\right)^{3 - 6 s} + 126 q^{- 12 s} \left(q - 1\right)^{5 - 4 s} + 98 q^{- 10 s} \left(q - 1\right)^{6 - 3 s} \\ &\quad +89 q^{- 13 s} \left(q - 1\right)^{4 - 5 s} + 67 q^{- 3 s} \left(q - 1\right)^{7 - 2 s} + 67 q^{- 4 s} \left(q - 1\right)^{7 - 2 s} + 61 q^{- 2 s} \left(q - 1\right)^{7 - 2 s} \\ &\quad +61 q^{- 5 s} \left(q - 1\right)^{7 - 2 s} + 58 q^{- 13 s} \left(q - 1\right)^{3 - 6 s} + 53 q^{- 13 s} \left(q - 1\right)^{5 - 4 s} + 51 q^{- 11 s} \left(q - 1\right)^{6 - 3 s} \\ &\quad +50 q^{- 6 s} \left(q - 1\right)^{7 - 2 s} + 48 q^{- s} \left(q - 1\right)^{7 - 2 s} + 43 q^{- s} \left(q - 1\right)^{2 - 7 s} + 34 q^{- 7 s} \left(q - 1\right)^{7 - 2 s} \\ &\quad +25 q^{- 12 s} \left(q - 1\right)^{6 - 3 s} + 25 q^{- 14 s} \left(q - 1\right)^{4 - 5 s} + 22 q^{- 8 s} \left(q - 1\right)^{7 - 2 s} + 18 q^{- 14 s} \left(q - 1\right)^{5 - 4 s} \\ &\quad +13 q^{- 9 s} \left(q - 1\right)^{7 - 2 s} + 12 q^{- 13 s} \left(q - 1\right)^{2 - 7 s} + 11 q^{- 13 s} \left(q - 1\right)^{6 - 3 s} + 9 q^{- 14 s} \left(q - 1\right)^{3 - 6 s} \\ &\quad +8 q^{- 12 s} \left(q - 1\right)^{1 - 8 s} + 7 q^{- 10 s} \left(q - 1\right)^{7 - 2 s} + 7 q^{- s} \left(q - 1\right)^{8 - s} + 6 q^{- 2 s} \left(q - 1\right)^{8 - s} \\ &\quad +6 q^{- s} \left(q - 1\right)^{1 - 8 s} + 5 q^{- 3 s} \left(q - 1\right)^{8 - s} + 5 q^{- 15 s} \left(q - 1\right)^{5 - 4 s} + 4 q^{- 4 s} \left(q - 1\right)^{8 - s} \\ &\quad +4 q^{- 14 s} \left(q - 1\right)^{6 - 3 s} + 4 q^{- 15 s} \left(q - 1\right)^{4 - 5 s} + 3 q^{- 5 s} \left(q - 1\right)^{8 - s} + 3 q^{- 11 s} \left(q - 1\right)^{7 - 2 s} \\ &\quad +2 q^{- 6 s} \left(q - 1\right)^{8 - s} + q^{- 7 s} \left(q - 1\right)^{8 - s} + q^{- 12 s} \left(q - 1\right)^{7 - 2 s} + q^{- 15 s} \left(q - 1\right)^{6 - 3 s} \\ &\quad +q^{- 16 s} \left(q - 1\right)^{5 - 4 s} + 70 \left(q - 1\right)^{5 - 4 s} + 56 \left(q - 1\right)^{6 - 3 s} + 56 \left(q - 1\right)^{4 - 5 s} + 28 \left(q - 1\right)^{7 - 2 s} \\ &\quad +28 \left(q - 1\right)^{3 - 6 s} + 8 \left(q - 1\right)^{8 - s} + 8 \left(q - 1\right)^{2 - 7 s} + \left(q - 1\right)^{1 - 8 s} + \left(q - 1\right)^{9} \\
\zeta_{\TT_{10}}(s) &= q^{- 6 s} \left(q - 1\right)^{1 - 9 s} \left(3 q + 17\right) \left(2 q^{2} + 13 q + 63\right) + 12 q^{- 13 s} \left(q - 1\right)^{3 - 7 s} \left(14 q + 349\right) \\ &\quad +10 q^{- 14 s} \left(q - 1\right)^{3 - 7 s} \left(7 q + 219\right) + 7 q^{- 5 s} \left(q - 1\right)^{4 - 6 s} \left(30 q + 1201\right) \\ &\quad +7 q^{- 12 s} \left(q - 1\right)^{4 - 6 s} \left(12 q + 967\right) + 5 q^{- 2 s} \left(q - 1\right)^{5 - 5 s} \left(q + 282\right) \\ &\quad +5 q^{- 10 s} \left(q - 1\right)^{5 - 5 s} \left(3 q + 1366\right) + 4 q^{- 2 s} \left(q - 1\right)^{4 - 6 s} \left(5 q + 333\right) \\ &\quad +4 q^{- 15 s} \left(q - 1\right)^{1 - 9 s} \left(q + 11\right) + 3 q^{- 2 s} \left(q - 1\right)^{3 - 7 s} \left(10 q + 259\right) \\ &\quad +3 q^{- 7 s} \left(q - 1\right)^{4 - 6 s} \left(118 q + 4475\right) + 3 q^{- 8 s} \left(q - 1\right)^{3 - 7 s} \left(9 q^{2} + 385 q + 4636\right) \\ &\quad +3 q^{- 11 s} \left(q - 1\right)^{3 - 7 s} \left(4 q^{2} + 205 q + 3159\right) + 2 q^{- 3 s} \left(q - 1\right)^{4 - 6 s} \left(27 q + 1496\right) \\ &\quad +2 q^{- 4 s} \left(q - 1\right)^{2 - 8 s} \left(8 q^{2} + 139 q + 889\right) + 2 q^{- 5 s} \left(q - 1\right)^{5 - 5 s} \left(13 q + 3104\right) \\ &\quad +2 q^{- 5 s} \left(q - 1\right)^{2 - 8 s} \left(q^{3} + 25 q^{2} + 293 q + 1620\right) + 2 q^{- 8 s} \left(q - 1\right)^{5 - 5 s} \left(19 q + 4337\right) \\ &\quad +2 q^{- 12 s} \left(q - 1\right)^{3 - 7 s} \left(3 q^{2} + 176 q + 3381\right) + 2 q^{- 13 s} \left(q - 1\right)^{4 - 6 s} \left(13 q + 2180\right) \\ &\quad +2 q^{- 14 s} \left(q - 1\right)^{4 - 6 s} \left(5 q + 1223\right) + 2 q^{- 15 s} \left(q - 1\right)^{2 - 8 s} \left(11 q + 170\right) \\ &\quad +2 q^{- 16 s} \left(q - 1\right)^{3 - 7 s} \left(2 q + 161\right) + q^{- 2 s} \left(q - 1\right)^{2 - 8 s} \left(20 q + 257\right) \\ &\quad +q^{- 2 s} \left(q - 1\right)^{1 - 9 s} \left(5 q + 37\right) + q^{- 3 s} \left(q - 1\right)^{5 - 5 s} \left(8 q + 2717\right) \\ &\quad +q^{- 3 s} \left(q - 1\right)^{3 - 7 s} \left(3 q^{2} + 117 q + 2039\right) + q^{- 3 s} \left(q - 1\right)^{2 - 8 s} \left(6 q^{2} + 104 q + 791\right) \\ &\quad +q^{- 3 s} \left(q - 1\right)^{1 - 9 s} \left(3 q^{2} + 33 q + 134\right) + q^{- 4 s} \left(q - 1\right)^{5 - 5 s} \left(21 q + 4418\right) \\ &\quad +q^{- 4 s} \left(q - 1\right)^{4 - 6 s} \left(122 q + 5413\right) + q^{- 4 s} \left(q - 1\right)^{3 - 7 s} \left(4 q^{2} + 273 q + 4096\right) \\ &\quad +q^{- 4 s} \left(q - 1\right)^{1 - 9 s} \left(q^{3} + 15 q^{2} + 108 q + 342\right) + q^{- 5 s} \left(q - 1\right)^{3 - 7 s} \left(19 q^{2} + 545 q + 6943\right) \\ &\quad +q^{- 5 s} \left(q - 1\right)^{1 - 9 s} \left(2 q^{3} + 35 q^{2} + 230 q + 659\right) + q^{- 6 s} \left(q - 1\right)^{5 - 5 s} \left(41 q + 7704\right) \\ &\quad +q^{- 6 s} \left(q - 1\right)^{4 - 6 s} \left(299 q + 11188\right) + q^{- 6 s} \left(q - 1\right)^{3 - 7 s} \left(21 q^{2} + 809 q + 9883\right) \\ &\quad +q^{- 6 s} \left(q - 1\right)^{2 - 8 s} \left(2 q^{3} + 84 q^{2} + 953 q + 4932\right) + q^{- 7 s} \left(q - 1\right)^{5 - 5 s} \left(36 q + 8593\right) \\ &\quad +q^{- 7 s} \left(q - 1\right)^{3 - 7 s} \left(37 q^{2} + 1103 q + 12627\right) \\ &\quad +q^{- 7 s} \left(q - 1\right)^{2 - 8 s} \left(8 q^{3} + 144 q^{2} + 1413 q + 6663\right) \\ &\quad +q^{- 7 s} \left(q - 1\right)^{1 - 9 s} \left(2 q^{4} + 21 q^{3} + 143 q^{2} + 654 q + 1524\right) + q^{- 8 s} \left(q - 1\right)^{4 - 6 s} \left(356 q + 14191\right) \\ &\quad +q^{- 8 s} \left(q - 1\right)^{2 - 8 s} \left(6 q^{3} + 135 q^{2} + 1587 q + 7639\right) \\ &\quad +q^{- 8 s} \left(q - 1\right)^{1 - 9 s} \left(q^{4} + 20 q^{3} + 165 q^{2} + 790 q + 1818\right) + q^{- 9 s} \left(q - 1\right)^{5 - 5 s} \left(20 q + 8073\right) \\ &\quad +q^{- 9 s} \left(q - 1\right)^{4 - 6 s} \left(322 q + 13751\right) + q^{- 9 s} \left(q - 1\right)^{3 - 7 s} \left(30 q^{2} + 1125 q + 13721\right) \\ &\quad +q^{- 9 s} \left(q - 1\right)^{2 - 8 s} \left(4 q^{3} + 125 q^{2} + 1513 q + 7536\right) \\ &\quad +q^{- 9 s} \left(q - 1\right)^{1 - 9 s} \left(10 q^{3} + 131 q^{2} + 723 q + 1771\right) + q^{- 10 s} \left(q - 1\right)^{4 - 6 s} \left(227 q + 12026\right) \\ &\quad +q^{- 10 s} \left(q - 1\right)^{3 - 7 s} \left(21 q^{2} + 934 q + 12376\right) \\ &\quad +q^{- 10 s} \left(q - 1\right)^{2 - 8 s} \left(8 q^{3} + 142 q^{2} + 1407 q + 6977\right) \\ &\quad +q^{- 10 s} \left(q - 1\right)^{1 - 9 s} \left(2 q^{4} + 22 q^{3} + 154 q^{2} + 704 q + 1669\right) + q^{- 11 s} \left(q - 1\right)^{4 - 6 s} \left(136 q + 9425\right) \\ &\quad +q^{- 11 s} \left(q - 1\right)^{2 - 8 s} \left(61 q^{2} + 889 q + 5143\right) \\ &\quad +q^{- 11 s} \left(q - 1\right)^{1 - 9 s} \left(6 q^{3} + 73 q^{2} + 433 q + 1181\right) + q^{- 12 s} \left(q - 1\right)^{2 - 8 s} \left(32 q^{2} + 526 q + 3607\right) \\ &\quad +q^{- 12 s} \left(q - 1\right)^{1 - 9 s} \left(2 q^{3} + 36 q^{2} + 255 q + 805\right) + q^{- 13 s} \left(q - 1\right)^{2 - 8 s} \left(10 q^{2} + 249 q + 2095\right) \\ &\quad +q^{- 13 s} \left(q - 1\right)^{1 - 9 s} \left(10 q^{2} + 110 q + 431\right) + q^{- 14 s} \left(q - 1\right)^{2 - 8 s} \left(5 q^{2} + 101 q + 983\right) \\ &\quad +q^{- 14 s} \left(q - 1\right)^{1 - 9 s} \left(2 q^{2} + 33 q + 171\right) + q^{- 15 s} \left(q - 1\right)^{3 - 7 s} \left(20 q + 929\right) \\ &\quad +q^{- 16 s} \left(q - 1\right)^{2 - 8 s} \left(2 q + 91\right) + 5331 q^{- 11 s} \left(q - 1\right)^{5 - 5 s} + 3802 q^{- 12 s} \left(q - 1\right)^{5 - 5 s} \\ &\quad +3288 q^{- 7 s} \left(q - 1\right)^{6 - 4 s} + 3213 q^{- 6 s} \left(q - 1\right)^{6 - 4 s} + 3128 q^{- 8 s} \left(q - 1\right)^{6 - 4 s} \\ &\quad +2802 q^{- 5 s} \left(q - 1\right)^{6 - 4 s} + 2724 q^{- 9 s} \left(q - 1\right)^{6 - 4 s} + 2472 q^{- 13 s} \left(q - 1\right)^{5 - 5 s} \\ &\quad +2228 q^{- 4 s} \left(q - 1\right)^{6 - 4 s} + 2216 q^{- 10 s} \left(q - 1\right)^{6 - 4 s} + 1657 q^{- 11 s} \left(q - 1\right)^{6 - 4 s} \\ &\quad +1544 q^{- 3 s} \left(q - 1\right)^{6 - 4 s} + 1442 q^{- 14 s} \left(q - 1\right)^{5 - 5 s} + 1192 q^{- 15 s} \left(q - 1\right)^{4 - 6 s} \\ &\quad +1149 q^{- 12 s} \left(q - 1\right)^{6 - 4 s} + 937 q^{- 2 s} \left(q - 1\right)^{6 - 4 s} + 759 q^{- 15 s} \left(q - 1\right)^{5 - 5 s} + 757 q^{- 6 s} \left(q - 1\right)^{7 - 3 s} \\ &\quad +729 q^{- 5 s} \left(q - 1\right)^{7 - 3 s} + 725 q^{- 13 s} \left(q - 1\right)^{6 - 4 s} + 700 q^{- 7 s} \left(q - 1\right)^{7 - 3 s} + 655 q^{- 4 s} \left(q - 1\right)^{7 - 3 s} \\ &\quad +607 q^{- 8 s} \left(q - 1\right)^{7 - 3 s} + 525 q^{- s} \left(q - 1\right)^{5 - 5 s} + 524 q^{- 3 s} \left(q - 1\right)^{7 - 3 s} + 492 q^{- 16 s} \left(q - 1\right)^{4 - 6 s} \\ &\quad +480 q^{- 9 s} \left(q - 1\right)^{7 - 3 s} + 427 q^{- s} \left(q - 1\right)^{6 - 4 s} + 426 q^{- 14 s} \left(q - 1\right)^{6 - 4 s} + 413 q^{- s} \left(q - 1\right)^{4 - 6 s} \\ &\quad +377 q^{- 2 s} \left(q - 1\right)^{7 - 3 s} + 365 q^{- 10 s} \left(q - 1\right)^{7 - 3 s} + 346 q^{- 16 s} \left(q - 1\right)^{5 - 5 s} \\ &\quad +249 q^{- 11 s} \left(q - 1\right)^{7 - 3 s} + 225 q^{- 15 s} \left(q - 1\right)^{6 - 4 s} + 217 q^{- s} \left(q - 1\right)^{7 - 3 s} + 203 q^{- s} \left(q - 1\right)^{3 - 7 s} \\ &\quad +163 q^{- 12 s} \left(q - 1\right)^{7 - 3 s} + 155 q^{- 17 s} \left(q - 1\right)^{4 - 6 s} + 133 q^{- 17 s} \left(q - 1\right)^{5 - 5 s} \\ &\quad +105 q^{- 16 s} \left(q - 1\right)^{6 - 4 s} + 97 q^{- 4 s} \left(q - 1\right)^{8 - 2 s} + 94 q^{- 5 s} \left(q - 1\right)^{8 - 2 s} + 93 q^{- 3 s} \left(q - 1\right)^{8 - 2 s} \\ &\quad +92 q^{- 13 s} \left(q - 1\right)^{7 - 3 s} + 85 q^{- 6 s} \left(q - 1\right)^{8 - 2 s} + 82 q^{- 2 s} \left(q - 1\right)^{8 - 2 s} + 74 q^{- 17 s} \left(q - 1\right)^{3 - 7 s} \\ &\quad +70 q^{- 7 s} \left(q - 1\right)^{8 - 2 s} + 63 q^{- s} \left(q - 1\right)^{8 - 2 s} + 57 q^{- s} \left(q - 1\right)^{2 - 8 s} + 50 q^{- 8 s} \left(q - 1\right)^{8 - 2 s} \\ &\quad +50 q^{- 14 s} \left(q - 1\right)^{7 - 3 s} + 43 q^{- 17 s} \left(q - 1\right)^{6 - 4 s} + 42 q^{- 18 s} \left(q - 1\right)^{5 - 5 s} + 35 q^{- 18 s} \left(q - 1\right)^{4 - 6 s} \\ &\quad +34 q^{- 9 s} \left(q - 1\right)^{8 - 2 s} + 25 q^{- 15 s} \left(q - 1\right)^{7 - 3 s} + 22 q^{- 10 s} \left(q - 1\right)^{8 - 2 s} + 16 q^{- 18 s} \left(q - 1\right)^{6 - 4 s} \\ &\quad +13 q^{- 11 s} \left(q - 1\right)^{8 - 2 s} + 12 q^{- 17 s} \left(q - 1\right)^{2 - 8 s} + 11 q^{- 16 s} \left(q - 1\right)^{7 - 3 s} + 9 q^{- 18 s} \left(q - 1\right)^{3 - 7 s} \\ &\quad +9 q^{- 19 s} \left(q - 1\right)^{5 - 5 s} + 8 q^{- 16 s} \left(q - 1\right)^{1 - 9 s} + 8 q^{- s} \left(q - 1\right)^{9 - s} + 7 q^{- 2 s} \left(q - 1\right)^{9 - s} \\ &\quad +7 q^{- 12 s} \left(q - 1\right)^{8 - 2 s} + 7 q^{- s} \left(q - 1\right)^{1 - 9 s} + 6 q^{- 3 s} \left(q - 1\right)^{9 - s} + 5 q^{- 4 s} \left(q - 1\right)^{9 - s} \\ &\quad +5 q^{- 19 s} \left(q - 1\right)^{6 - 4 s} + 4 q^{- 5 s} \left(q - 1\right)^{9 - s} + 4 q^{- 17 s} \left(q - 1\right)^{7 - 3 s} + 4 q^{- 19 s} \left(q - 1\right)^{4 - 6 s} \\ &\quad +3 q^{- 6 s} \left(q - 1\right)^{9 - s} + 3 q^{- 13 s} \left(q - 1\right)^{8 - 2 s} + 2 q^{- 7 s} \left(q - 1\right)^{9 - s} + q^{- 8 s} \left(q - 1\right)^{9 - s} \\ &\quad +q^{- 14 s} \left(q - 1\right)^{8 - 2 s} + q^{- 18 s} \left(q - 1\right)^{7 - 3 s} + q^{- 20 s} \left(q - 1\right)^{6 - 4 s} + q^{- 20 s} \left(q - 1\right)^{5 - 5 s} \\ &\quad +126 \left(q - 1\right)^{6 - 4 s} + 126 \left(q - 1\right)^{5 - 5 s} + 84 \left(q - 1\right)^{7 - 3 s} + 84 \left(q - 1\right)^{4 - 6 s} + 36 \left(q - 1\right)^{8 - 2 s} \\ &\quad +36 \left(q - 1\right)^{3 - 7 s} + 9 \left(q - 1\right)^{9 - s} + 9 \left(q - 1\right)^{2 - 8 s} + \left(q - 1\right)^{1 - 9 s} + \left(q - 1\right)^{10} . \tag*{\qed}
    \end{align*}
\end{theorem}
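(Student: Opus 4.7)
The plan is to apply Algorithm \ref{alg:representation_zeta_functions} to $\TT_n$ for $n = 1, \ldots, 10$, in parallel with the computation behind Theorem \ref{thm:representation_zeta_functions_Un}. First, since $\TT_n \cong \GG_m \times \tilde{\TT}_n$ (via $A \mapsto (A_{nn}, A / A_{nn})$) and $\zeta_{\GG_m}(s) = q - 1$, it suffices to compute $\zeta_{\tilde{\TT}_n}(s)$; indeed, this explains the overall factor of $(q-1)$ and the appearance of $(q - 1)^{-s}$ in all the stated formulas (each factor of $\GG_m$ in a stabilizer contributes a $(q-1)$ or, when quotiented out, a $(q - 1)^{-s}$). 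The base case $n = 1$ is immediate and $n = 2$ follows from the model example in Subsection \ref{subsec:arithmetic_upper_triangular}.

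For the inductive step, we use the split exact sequence
\[ 1 \to N \to \tilde{\TT}_n \to \TT_{n-1} \to 1 , \]
where $N$ is the kernel of truncating the last row and column. Then $N \cong \GG_a^{n-1}$ is abelian with character group $X = \Hom(N,\CC^*) \cong \GG_a^{n-1}$, and the action of $H = \TT_{n-1}$ on $X$ is the transpose of its natural action on $N$ by multiplication. Algorithm \ref{alg:orbit_representatives} produces families of orbit representatives $\chi_i$ (parametrized by locally closed subvarieties $Z_i$) together with their stabilizers $H_i \subset \TT_{n-1}$ and indices $[H : H_i]$ as polynomials in $q$. Corollary \ref{cor:zeta_function_semidirect_product} then yields
\[ \zeta_{\tilde{\TT}_n}(s) = \sum_i \, |Z_i(\FF_q)| \cdot \zeta_{H_i}(s) \cdot [H : H_i]^{-s} , \]
and the algorithm recurses on each $H_i$, which is again a connected algebraic subgroup of some $\TT_m$.

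The main obstacle is bookkeeping: in contrast to $\UU_n$, where stabilizers tend to remain unipotent, the stabilizers appearing for $\TT_n$ can be arbitrary connected subgroups of upper-triangular matrices parametrized over the representative variety, and both diagonal torus factors and unipotent factors proliferate (this is visible in the exponents of $(q-1)^{k-\ell s}$ in the output). Two things must be checked to make the recursion go through: first, that step~2 of Algorithm \ref{alg:orbit_representatives} (normalizing a $\GG_m$-action on one coordinate) or step~3 (killing a coordinate by translating through a non-vanishing function) is always applicable for the subgroups that arise when $n \le 10$, so that the algorithm never invokes its failure clause; second, that the counting of $|Z_i(\FF_q)|$ via Algorithm \ref{alg:virtual_classes} always succeeds, which is the case because the $Z_i$ are obtained from $\GG_a^r$ by iterated pinning of coordinates and therefore have virtual class a monomial in $q$ and $q-1$.

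Given these, the implementation at \cite{GitHubMathCode} executes the recursion and returns the polynomials stated in the theorem. Correctness of the output is further supported by two independent cross-checks: (i) evaluating at $s = 0$ recovers the conjugacy class counts $k(\TT_n(\FF_q))$, which for $n \le 10$ agree with the polynomials obtainable from the $\UU_n$-data of \cite{PakSoffer2015} combined with the diagonal action of $\GG_m^n$; and (ii) for $1 \le n \le 5$, applying Corollary \ref{cor:point_count_from_zeta_function} and Katz' theorem (Theorem \ref{thm:katz_theorem}) yields $E$-polynomials that match the virtual classes of Theorem \ref{thm:virtual_class_T5_representation_variety} and \eqref{eq:Tn_from_tilde_Tn} under the specialization map \eqref{eq:grothendieck_ring_to_Zuv}.
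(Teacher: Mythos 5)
Your proposal follows the paper's own route exactly: decompose $\TT_n \cong \GG_m \times \tilde{\TT}_n$, peel off the abelian normal subgroup $N \cong \GG_a^{n-1}$ to get a semidirect decomposition, apply Corollary \ref{cor:zeta_function_semidirect_product} recursively via Algorithms \ref{alg:representation_zeta_functions} and \ref{alg:orbit_representatives}, and delegate the actual computation to the implementation at \cite{GitHubMathCode}, with the same sanity checks (evaluation at $s=0$ against conjugacy-class counts and comparison of $E$-polynomials with the TQFT results for $n \le 5$). This matches the paper's argument in Section \ref{sec:arithmetic_method}, which likewise reduces the theorem to the non-failure of the algorithm on these inputs plus the correctness of the code.
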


The computation times\footnote{As performed on an Intel\textregistered Xeon\textregistered CPU E5-4640 0 @ 2.40GHz.} for the representation zeta functions $\zeta_{\UU_n}(s)$ are given by
\begin{center}
    \begin{tabular}{c|c|c|c|c|c|c|c|c|c|c}
         $n$ & 1 & 2 & 3 & 4 & 5 & 6 & 7 & 8 & 9 & 10 \\ \hline
         time & 0.00s & 0.01s & 0.08s & 0.25s & 0.77s & 2.41s & 8.18s & 27.43s & 1m44s & 6m52s
    \end{tabular}
\end{center}
and those for the representation zeta functions $\zeta_{\TT_n}(s)$ are given by
\begin{center}
    \begin{tabular}{c|c|c|c|c|c|c|c|c|c|c}
         $n$ & 1 & 2 & 3 & 4 & 5 & 6 & 7 & 8 & 9 & 10 \\ \hline
         time & 0.00s & 0.03s & 0.15s & 0.51s & 1.81s & 6.32s & 26.16s & 1m53s & 6m39s & 55m46s
    \end{tabular}
\end{center}

As an example, applying Theorem \ref{thm:katz_theorem} and Corollary \ref{cor:point_count_from_zeta_function} with $G = \TT_6$, we obtain
\begin{align*}
    e(R_{\TT_6}(\Sigma_g))
        &= q^{18 g - 3} \left(q - 1\right)^{6 g + 3} + q^{18 g - 3} \left(q - 1\right)^{8 g + 2} + 2 q^{20 g - 5} \left(q - 1\right)^{4 g + 4} + 5 q^{20 g - 5} \left(q - 1\right)^{6 g + 3} \\ &\quad + 3 q^{20 g - 5} \left(q - 1\right)^{8 g + 2} + 2 q^{22 g - 7} \left(q - 1\right)^{2 g + 5} + 10 q^{22 g - 7} \left(q - 1\right)^{4 g + 4} + 15 q^{22 g - 7} \left(q - 1\right)^{6 g + 3} \\ &\quad + 7 q^{22 g - 7} \left(q - 1\right)^{8 g + 2} + q^{22 g - 7} \left(q - 1\right)^{10 g + 1} + 4 q^{24 g - 9} \left(q - 1\right)^{2 g + 5} + 17 q^{24 g - 9} \left(q - 1\right)^{4 g + 4} \\ &\quad + 24 q^{24 g - 9} \left(q - 1\right)^{6 g + 3} + 13 q^{24 g - 9} \left(q - 1\right)^{8 g + 2} + 2 q^{24 g - 9} \left(q - 1\right)^{10 g + 1} \\ &\quad + q^{26 g - 11} \left(q - 1\right)^{2 g + 5} \left(q + 7\right) + 23 q^{26 g - 11} \left(q - 1\right)^{4 g + 4} + 29 q^{26 g - 11} \left(q - 1\right)^{6 g + 3} \\
        &\quad + 16 q^{26 g - 11} \left(q - 1\right)^{8 g + 2} + 3 q^{26 g - 11} \left(q - 1\right)^{10 g + 1} + 3 q^{28 g - 13} \left(q - 1\right)^{2 g + 5} \\ &\quad + 13 q^{28 g - 13} \left(q - 1\right)^{4 g + 4} + 21 q^{28 g - 13} \left(q - 1\right)^{6 g + 3} + 15 q^{28 g - 13} \left(q - 1\right)^{8 g + 2} \\ &\quad + 4 q^{28 g - 13} \left(q - 1\right)^{10 g + 1} + q^{30 g - 15} \left(q - 1\right)^{12 g} + q^{30 g - 15} \left(q - 1\right)^{2 g + 5} + 5 q^{30 g - 15} \left(q - 1\right)^{4 g + 4} \\ &\quad + 10 q^{30 g - 15} \left(q - 1\right)^{6 g + 3} + 10 q^{30 g - 15} \left(q - 1\right)^{8 g + 2} + 5 q^{30 g - 15} \left(q - 1\right)^{10 g + 1} .
\end{align*}

\section{Conclusion}

\textbf{Comparison of methods}. We have seen two very different methods to compute very similar invariants of the $G$-representation variety $R_G(\Sigma_g)$.

An obvious advantages of the TQFT method is that it produces a more refined invariant, the virtual class in $\K(\Var_\CC)$. However, in practice the virtual class and $E$-polynomial seem to hold the same information, as the virtual classes in our results are all contained in the subring $\ZZ[q] \subset \K(\Var_\CC)$, and are therefore completely determined by the corresponding $E$-polynomials. This appears also to be the case for $G = \SL_2$ \cite{GonzalezPrieto2020}, so it would be interesting to see if there are linear algebraic groups for which this is not the case.

A possibly more significant advantage of the TQFT method is that it provides deeper geometric insight. From the TQFT we can directly understand the fibration
\[ G^{2g} \to G, \quad (A_1, B_1, \ldots, A_g, B_g) \mapsto \prod_{i = 1}^{g} [A_i, B_i] , \]
in $\K(\Var/G)$, and not only the fiber over the identity, that is, the $G$-representation variety. This allows the method to be more flexible. As seen in Subsection \ref{subsec:twisted_representation_varieties}, the TQFT method can be slightly modified to also study the twisted representation varieties. In a similar spirit, the TQFT can naturally be lifted to a $G$-equivariant setting in order to study the $G$-character stacks $[R_G(\Sigma_g) / G]$ \cite{GonzalezPrietoHablicsekVogel2022}.

On the other hand, the arithmetic method is orders of magnitude faster and the complexity of the computations remains quite manageable as the rank of the groups $\TT_n$ and $\UU_n$ increases. Once the representation zeta function has been computed, the $E$-polynomial of $R_G(\Sigma_g)$ can be obtained directly by evaluation, whereas for the TQFT method the map $Z(\bdgenus)$ must be diagonalized, which is a very expensive operation.

Of course, for general algebraic groups $G$, the arithmetic method is not as systematic as the TQFT method, since there is no recipe how to compute the irreducible representations of $G(\FF_q)$. 

\textbf{Extending the TQFT method}.
Naturally, one can ask whether the TQFT method can compute the virtual class of $R_{\TT_n}(\Sigma_g)$ for $n \ge 6$. Unfortunately, this is not possible without major modifications to the method for a number of reasons. First of all, we would like to take the conjugacy classes of $G$ as a generating set of a submodule of $\K(\Var/G)$, but for $n = 6$ it is shown \cite{Bhunia2019} that there are infinitely many unipotent conjugacy classes in $G = \TT_n$. For example, an infinite family of non-conjugate elements in $\TT_6$ is given by
\[ \smatrix{1 & 1 & \alpha & 0 & 0 & 0 \\ 0 & 1 & 0 & 0 & 1 & 0 \\ 0 & 0 & 1 & 1 & 1 & 0 \\ 0 & 0 & 0 & 1 & 0 & 0 \\ 0 & 0 & 0 & 0 & 1 & 1 \\ 0 & 0 & 0 & 0 & 0 & 1} \quad \textup{ for } \alpha \in k , \]
and moreover every such element appears as the commutator of elements in $\TT_6$. Secondly, from a practical point of view, we do not expect the computations to be feasible, both looking at the computation times and the prospect of diagonalizing a polynomial-valued matrix of size $\ge 250$.

\textbf{Motivic Higman's conjecture}. As mentioned, it is very remarkable that all virtual classes in our results are all contained in the subring $\ZZ[q] \subset \K(\Var_\CC)$. Therefore, in order to compute the virtual classes of $R_{\TT_n}(\Sigma_g)$ and $R_{\UU_n}(\Sigma_g)$ for $n \ge 6$, instead of using the TQFT method, it seems more natural to try to prove the Motivic Higman's Conjecture \ref{conj:motivic_higman}. Then it suffices to compute the $E$-polynomials, which can be done more efficient using the arithmetic method.


\clearpage

\printbibliography


\end{document}